\theoremstyle{plain}
\newtheorem{theorem}{Theorem}[section]
\newtheorem{proposition}[theorem]{Proposition}
\newtheorem{lemma}[theorem]{Lemma}
\theoremstyle{definition}
\newtheorem{definition}[theorem]{Definition}
\newcommand{\mat}[4]{\left( \begin{array}{cc} {#1} & {#2} \\ {#3} & {#4}
\end{array} \right)}
\numberwithin{equation}{section}
\author{Moshe Adrian$^{a,*}$ \\ $^a$Department of Mathematics, University of Utah, Salt Lake City, UT 84112 \\E-mail address: madrian@math.utah.edu \\Telephone number: 1-801-581-6851 \\Fax number: 1-801-581-4148 \\$^*$Corresponding Author}
\begin{document}

\title{A new realization of the Langlands correspondence for $\mathrm{PGL}(2,F)$}
\maketitle




\begin{abstract}
In this paper, we give a new realization of the local Langlands correspondence for $\mathrm{PGL}(2,F)$, where $F$ is a $p$-adic field of odd residual characteristic.   In this case, supercuspidal representations of $\mathrm{PGL}(2,F)$ are parameterized by characters of elliptic tori. Taking a cue from real groups, we propose that supercuspidal representations are
naturally parameterized by characters of covers of tori. Over the reals, Harish-Chandra defined the discrete series representations by specifying their characters restricted to an elliptic torus, and these characters may naturally be expressed in terms of characters of a cover of the torus. We write down a natural analogue of Harish-Chandra's character for $\mathrm{PGL}(2,F)$, and show that it is the character of a unique supercuspidal representation, on a canonical subset of the elliptic torus. This paves the way for a realization of the local Langlands correspondence for $\mathrm{PGL}(2,F)$ that eliminates the need for any character twists.
\end{abstract}

\section{Introduction}

In this paper, we reexamine the local Langlands correspondence for $\mathrm{PGL}(2,F)$, where $F$ is a non-Archimedean local field of characteristic zero with odd residual characteristic, using character theory and ideas from the theory of real reductive groups.  Our main result is a construction of the local Langlands correspondence which gives an alternative explanation of some of the technical issues arising in \cite{bushnellhenniart}.

Our results illuminate some new ideas about character theory of $p$-adic groups and local Langlands correspondence for $p$-adic groups not known before.  In particular, irreducible Weil group representations $W_F \rightarrow \mathrm{SL}(2,\mathbb{C})$ and supercuspidal representations of $\mathrm{PGL}(2,F)$ are most naturally parameterized not by characters of elliptic tori, but by genuine characters of double covers of elliptic tori, as is the case for admissible representations of real groups.  We show that the supercuspidal representations of $\mathrm{PGL}(2,F)$ are also naturally parameterized by genuine characters of double covers of elliptic tori using character theory.  To do this we rewrite supercuspidal characters in terms of double covers of elliptic tori as in Harish-Chandra's discrete series character formula and as in the Weyl character formula.  Rewriting the supercuspidal character formulas in this way paves the way for a construction of local Langlands correspondence for $\mathrm{PGL}(2,F)$ that eliminates the need for any finite order character twists in the local Langlands correspondence for $\mathrm{PGL}(2,F)$ that arise in \cite{bushnellhenniart}.  As we shall see, our results and formulas also give a natural explanation of the character formulas that first appeared in \cite{sallyshalika}.

Let us first recall the classical construction of the local Langlands correspondence for $\mathrm{GL}(2,F)$ (note that the local Langlands correspondence for $\mathrm{PGL}(2,F)$ can be obtained from that of $\mathrm{GL}(2,F)$ by isolating the representations of $\mathrm{GL}(2,F)$ with trivial central character).  We wish to note that a local Langlands correspondence for $\mathrm{GL}(n,F)$ was conjectured by Moy in \cite{moy} for general $n$. Because of recent work of Bushnell and Henniart (see \cite{bushnellhenniart1}), the correspondence of \cite{moy} is indeed correct for $\mathrm{GL}(\ell,F)$, $\ell$ a prime.  By \cite{bushnellhenniart}, there is a map
\begin{equation*}
\left\{
\begin{array}{ll}
isomorphism \ classes \ of \\
admissible \ pairs
\end{array}
\right\} \rightarrow \left\{
\begin{array}{ll}
supercuspidal \ representations \\
of \ \mathrm{GL}(2,F)
\end{array} \right\}
\end{equation*}
$$\hspace{-.5in} (E/F, \chi) \mapsto \pi_{\chi}$$
where $\chi$ is a character of $E^*$ satisfying certain conditions to be described in section \ref{llcreview}, and $E/F$ is an extension of degree $2$.  This map is a bijection (see \cite{bushnellhenniart}).  Moreover, we have a bijection $$\{admissible \ pairs \ (E/F, \chi) \} \rightarrow \{irreducible \ W_F \rightarrow GL(2,\mathbb{C}) \}$$ $$(E/F, \chi) \mapsto Ind_{W_E}^{W_F}(\chi) =: \varphi(\chi)$$ (see \cite{bushnellhenniart}).  The problem is that the obvious map, $$\varphi(\chi) \mapsto \pi_{\chi},$$ the so-called ``naive correspondence'', is not the local Langlands correspondence because $\pi_{\chi}$ has the wrong central character.  Instead, the local Langlands correspondence is given by $$\varphi(\chi) \mapsto \pi_{\chi {}_F \mu_{\chi}}$$ (see \cite{bushnellhenniart}) for some subtle finite order character ${}_F \mu_{\chi}$ of $E^*$.  The proof of this is due to Kutzko (see \cite{kutzko1}), and we will review the main ingredients needed from the correspondence in section \ref{llcreview}.
We will show that if one considers genuine characters of canonical double covers of elliptic tori rather than characters of elliptic tori, then one obtains a realization of the local Langlands correspondence that eliminates the need for any character twists.  We do this in the following way.

Taking the cue from the theory of real groups, we use genuine characters $\tilde\chi$ of certain double covers of elliptic tori, denoted $T(F)_{\tau \circ \rho}$, instead of characters of elliptic tori $T(F)$, to parameterize both supercuspidal Weil parameters for $\mathrm{PGL}(2,F)$ and supercuspidal representations of $\mathrm{PGL}(2,F)$ using character theory.  In section \ref{setup}, we give a method for attaching a genuine character of a double cover of elliptic tori satisfying certain regularity conditions, to a supercuspidal Weil parameter of $\mathrm{PGL}(2,F)$:

\begin{equation}
\{supercuspidal \ W_F \rightarrow SL(2,\mathbb{C}) \} \leftrightarrow \left\{
\begin{array}{ll}
regular \ genuine \ characters \\
of \ T(F)_{\tau \circ \rho}
\end{array} \right\} \label{eq:parameters1}
\end{equation}
$$\hspace{-.75in}\varphi \mapsto \tilde\chi$$ Moreover, given a regular genuine character $\tilde\chi$ of $T(F)_{\tau \circ \rho}$, we write down a conjectural Harish-Chandra type character formula, denoted $F(\tilde\chi)$, in section \ref{setup}.  In sections \ref{theproofofcharacters}, \ref{samecartan}, \ref{othercartan}, and \ref{depthzerochapter}, we show that this naturally gives a bijection

\begin{equation}
\left\{
\begin{array}{ll}
regular \ genuine \ characters \\
of \ T(F)_{\tau \circ \rho}
\end{array}
\right\} \rightarrow \left\{
\begin{array}{ll}
supercuspidal \ representations \\
of \ \mathrm{PGL}(2,F)
\end{array} \right\}. \label{eq:parameters2}
\end{equation}
$$\tilde\chi \mapsto \pi(\tilde\chi)$$ where $\pi(\tilde\chi)$ is the unique supercuspidal representation of $\mathrm{PGL}(2,F)$, whose character, restricted to a certain natural subset of $T(F)$ (to be described later), is $F(\tilde\chi)$.  We remark that the characters of $\mathrm{GL}(2,F)$ were computed by Shimizu (see \cite{shimizu}), and those of $\mathrm{PGL}(2,F)$ by Silberger (see \cite{silberger}).

We then show that the composition of bijections (\ref{eq:parameters1}) and (\ref{eq:parameters2}), $$\varphi \mapsto \tilde\chi \mapsto \pi(\tilde\chi),$$ is the local Langlands correspondence for $\mathrm{PGL}(2,F)$.

Let $G$ be a connected reductive group defined over $\mathbb{R}$, and ${}^L G$ its $L$-group.  In the theory of real groups, an admissible homomorphism $W_{\mathbb{R}} \rightarrow {}^L G$ factors through the normalizer of a torus in ${}^L G$.  As such, it naturally produces a genuine character of the \emph{$\rho$-cover of $T(\mathbb{R})$}, denoted $T(\mathbb{R})_{\rho}$, a certain double cover of $T(\mathbb{R})$, which we now define.  First, we need to make the following definition.

\begin{definition}\label{rhocoverdefinition}
Let $G$ be a connected reductive group over $\mathbb{R}$, and let $T \subset G$ be a maximal torus over $\mathbb{R}$. Let $X^*(T)$ be the character group of $T$.  Let $\Delta^+$ be a set of positive roots of $G$ with respect to $T$.  Let $\rho = \frac{1}{2} \displaystyle\sum_{\alpha \in \Delta^+} \alpha$.  Then $2 \rho \in X^*(T)$.  Viewing $2 \rho$ as also a character of $T(\mathbb{R})$ by restriction, we define the \emph{$\rho$-cover of $T(\mathbb{R})$} as the pullback of the two homomorphisms $$2 \rho : T(\mathbb{R}) \rightarrow \mathbb{C}^* \ \ \ \ \ \ \Upsilon_{\mathbb{R}} : \mathbb{C}^* \rightarrow \mathbb{C}^*.$$ $$t \mapsto 2 \rho(t) \ \ \ \ \ \ \ \ \ \ \ z \mapsto z^2$$  We denote the $\rho$-cover by $T(\mathbb{R})_{\rho}$, and so the following diagram commutes:

$$
\begin{CD}
T(\mathbb{R})_{\rho} @> \rho >> \mathbb{C}^*\\
@VV \Pi_{\mathbb{R}} V @VV \Upsilon_{\mathbb{R}} V\\
T(\mathbb{R}) @>2 \rho>> \mathbb{C}^*
\end{CD}
$$
\end{definition}

Note that because of the commutativity of the diagram, although $\rho$ is not necessarily a character of $T(\mathbb{R})$,  $\rho$ is naturally a character of $T(\mathbb{R})_{\rho}$.  Moreover, $\Pi_{\mathbb{R}}$ is the canonical projection $\Pi_{\mathbb{R}}(t, \lambda) = t$.

The Weyl group acts on $T(\mathbb{R})_{\rho}$ as follows: If $(t, \lambda) \in T(\mathbb{R})_{\rho}$, then define

\begin{equation}
s(t, \lambda) := (st, e^{s^{-1} \rho - \rho}(t) \lambda)  \ \ \forall s \in W(G(\mathbb{R}), T(\mathbb{R})). \ \ \label{eq:weylgroupreal}
\end{equation}

The genuine characters of $T(\mathbb{R})_{\rho}$ that naturally arise from Weil parameters are used to form $L$-packets. One can write down more explicitly the local Langlands correspondence for discrete series representations, and this is the motivation for our work.

For $\mathrm{PGL}(2,\mathbb{R})$, the local Langlands correspondence for discrete series representations is as follows.  Fix a positive set of roots $\Delta^+$ of $G$ with respect to $T$.  Let $\varphi : W_{\mathbb{R}} \rightarrow \mathrm{SL}(2, \mathbb{C})$ be a discrete series parameter, and let $T(\mathbb{R})$ be the compact torus of $\mathrm{PGL}(2,\mathbb{R})$.  Then $\varphi$ naturally gives rise to a genuine character $\tilde\chi$ of $T(\mathbb{R})_{\rho}$.  By Harish-Chandra's discrete series character theorem, $\tilde\chi$ gives rise to a unique discrete series representation, denoted $\pi(\tilde\chi)$, whose character, restricted to the regular elements of $T(\mathbb{R})$, is given by a natural quotient of functions on $T(\mathbb{R})_{\rho}$ (see section \ref{realgroupschapter} for more details).  The map

\begin{equation}
\varphi \mapsto \pi(\tilde\chi)  \label{eq:naivereal}
\end{equation}

\noindent is the local Langlands correspondence for discrete series representations of $\mathrm{PGL}(2,\mathbb{R})$.
Thus, one can write down the correspondence for discrete series in terms of character theory.  This is the approach we take in this paper, and we will show that the above correspondence (\ref{eq:naivereal}) carries over naturally to the $p$-adic setting.  We will review the ingredients that we need from the theory of real groups in section \ref{realgroupschapter}.

One of the results that we will prove is an analogue of Harish-Chandra's discrete series character theorem for $\mathrm{PGL}(2,F)$, where $F$ is a $p$-adic field of characteristic zero.  In doing this, we give a new realization of the  local Langlands correspondence for $\mathrm{PGL}(2,F)$, and the character twists ${}_F \mu_{\chi}$ go away.  Before we present our main results, we need to define the covers of tori that will be essential, which are an analogue of the $\rho$-cover that appears in the theory over the reals.

Let $G$ be a connected reductive group defined over $F$, and $T$ a maximal torus in $G$ defined over $F$.  Let $\Delta^+$ be a choice of positive roots of $G$ with respect to $T$.  Let $\rho = \frac{1}{2} \displaystyle\sum_{\alpha \in \Delta^+} \alpha$.  Let $\lambda$ be a character of $K^*$, where $K/F$ contains the minimal splitting field of $T$. Note that the image of $2 \rho$, restricted to $T(F)$, lies in $K^*$.

\begin{definition}\label{tauofrhocover}
We define the \emph{$\lambda \circ \rho$-cover of $T(F)$}, denoted $T(F)_{\lambda \circ \rho}$, as the pullback of the two homomorphisms $$\lambda \circ 2 \rho : T(F) \rightarrow \mathbb{C}^* \ \ \ \ \ \ \Upsilon : \mathbb{C}^* \rightarrow \mathbb{C}^*$$ $$\ \ \ \ \ t \mapsto \lambda \circ 2 \rho(t) \ \ \ \ \ \ \ z \mapsto z^2.$$

$$
\begin{CD}
T(F)_{\lambda \circ \rho} @>\lambda \circ \rho>> \mathbb{C}^*\\
@VV \Pi V @VV \Upsilon V\\
T(F) @>\lambda \circ 2 \rho>> \mathbb{C}^*
\end{CD}
$$

\noindent That is, $T(F)_{\lambda \circ \rho} = \{(z,w) \in T(F) \times \mathbb{C}^* : \lambda(2 \rho(z)) = w^2 \}.$
\end{definition}

Note that the above map $T(F)_{\lambda \circ \rho} \rightarrow \mathbb{C}^*$ sends $(z, w)$ to $w$.  We have denoted this map by $\lambda \circ \rho$, even though this map is not literally $\lambda$ composed with $\rho$.  Moreover, $\Pi$ is the canonical projection $\Pi(z, w) = z$.  We will use these maps repeatedly.

We need some notation before stating our main results.  If $T$ is a torus in $G$ that is defined over $F$, let $W = W(G(F),T(F))$ denote the relative Weyl group of $G(F)$ with respect to $T(F)$.  Let $G(F) = \mathrm{PGL}(2,F)$, and let $T(F) = E^* / F^*$ be an elliptic torus in $\mathrm{PGL}(2,F)$, where $E = F(\sqrt{\zeta})$ for some $\zeta$.  If $s \in W(G(F),T(F))$ we set $\epsilon(s) := (-1, \zeta)^{\ell(s)}$, where $(,)$ denotes the Hilbert symbol of $F$, where $\ell(s)$ denotes the length of $s$.  Let $T(F)^{reg}$ denote the regular elements of $T(F)$.  Our main results will be the following theorems, which we will prove in sections \ref{positivedepthpaper} and \ref{depthzerochapter}.

\begin{theorem}\label{gl2theorem1}
Assume that the residual characteristic of $F$ is not $2$.  Let $\tilde\tau$ be any character of $E^*$ whose restriction to $F^*$ is $\aleph_{E/F}$, where $\aleph_{E/F}$ is the local class field theory character of $F^*$ relative to $E/F$.  Let $\tau := \tilde\tau \ | \ |_{E}$ where $| \ |_{E}$ denotes the $E$-adic absolute value.  Let $\Delta^+$ be a set of positive roots of G with respect to T.  Let $T(F)_{\tau \circ \rho}$ be as before.  Let $\tilde\chi$ be a genuine character of $T(F)_{\tau \circ \rho}$ that is regular.

Then there exists a unique non-zero constant $\epsilon(\tilde\chi, \Delta^+, \tau)$, depending only on $\tilde\chi, \Delta^+$, and $\tau$, and a unique supercuspidal representation of $\mathrm{PGL}(2,F)$ denoted $\pi(\tilde\chi)$, such that
$$\theta_{\pi(\tilde\chi)}(z) = \epsilon(\tilde\chi, \Delta^+, \tau) \frac{\displaystyle\sum_{s \in W} \epsilon(s)\tilde\chi({}^s w)}{\tau(\Delta^0(z,\Delta^+)) (\tau \circ \rho)(w)} \ \forall z \in T(F)^{reg} \ : 0 \leq n(z) \leq r/2$$ where $w \in T(F)_{\tau \circ \rho}$ is any element such that $\Pi(w) = z$ and $r$ is the depth of $\pi(\tilde\chi)$.  Moreover, every supercuspidal character of $\mathrm{PGL}(2,F)$ is of this form.
\end{theorem}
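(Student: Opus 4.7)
The plan is to deduce the theorem by comparing the conjectural formula on the right-hand side to the known explicit character formulas for supercuspidals of $\mathrm{PGL}(2,F)$ due to Sally--Shalika and Silberger, after rewriting those formulas in terms of the double cover $T(F)_{\tau\circ\rho}$. The overall argument splits naturally into two disjoint cases, positive depth (section \ref{positivedepthpaper}) and depth zero (section \ref{depthzerochapter}), since the structure of the character formulas and of the supercuspidal construction differ in these two regimes. In each case, by the Bushnell--Henniart parameterization, supercuspidals of $\mathrm{PGL}(2,F)$ are indexed by admissible characters $\chi$ of $E^*$ whose restriction to $F^*$ is trivial; the construction of section \ref{setup} turns this data into a regular genuine character $\tilde\chi$ of $T(F)_{\tau\circ\rho}$, and the ``every supercuspidal is of this form'' clause then reduces to checking that this construction is a bijection.

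For the positive-depth case, I would take the Sally--Shalika--Silberger formula for $\theta_{\pi(\tilde\chi)}(z)$ restricted to $z \in T(F)^{reg}$ with $0 \le n(z) \le r/2$ --- on this shallow part of the elliptic torus the character simplifies to a sum over $W$ of $\chi({}^s z)$ divided by a $p$-adic analogue of the Weyl denominator. The key algebraic manipulation is to lift this identity through the pullback diagram defining $T(F)_{\tau\circ\rho}$: choose a lift $w$ with $\Pi(w)=z$, replace $\chi({}^s z)$ with $\tilde\chi({}^s w)$ using the Weyl action (\ref{eq:weylgroupreal}) on the cover, and verify that the correction factor is exactly $\epsilon(s) = (-1,\zeta)^{\ell(s)}$. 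The denominator $\tau(\Delta^0(z,\Delta^+))\,(\tau\circ\rho)(w)$ must then be recognized as a square root (in the cover) of $\tau$ applied to the usual Weyl product $\prod_{\alpha\in\Delta^+}(1-\alpha^{-1}(z))\cdot 2\rho(z)$, so that Definition \ref{tauofrhocover} guarantees the formula is independent of the choice of lift $w$ (since multiplying $w$ by the nontrivial element $(1,-1)$ of the kernel of $\Pi$ changes numerator and denominator by the same sign, because $\tilde\chi$ is genuine).

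The main obstacle will be showing that the resulting constant of proportionality $\epsilon(\tilde\chi,\Delta^+,\tau)$ is genuinely independent of $z$. This amounts to a nontrivial identity relating the Hilbert symbol $(-1,\zeta)^{\ell(s)}$ appearing as $\epsilon(s)$, the restriction-to-$F^*$ of $\tilde\tau$ (which is $\aleph_{E/F}$), and the absolute value twist $|\,\cdot\,|_E$ built into $\tau$; together these must conspire so that the ratio of the Sally--Shalika formula to the $F(\tilde\chi)$-formula is constant on all of $\{z\in T(F)^{reg} : 0\le n(z)\le r/2\}$. This is where the character twists ${}_F\mu_\chi$ of \cite{bushnellhenniart} are secretly absorbed into the double-cover formalism. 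The depth-zero case is handled in parallel but uses the depth-zero supercuspidal character formula, with the root denominator regularized at the level of the residue field quotient.

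Having established the identity on the specified subset of $T(F)^{reg}$, uniqueness of $\pi(\tilde\chi)$ follows from linear independence of distinct supercuspidal characters as functions on the regular set of a fixed elliptic torus --- two supercuspidals whose characters agree on this set and have the same central character must coincide. Uniqueness of the constant $\epsilon(\tilde\chi,\Delta^+,\tau)$ is then immediate: evaluate both sides at any single $z$ where the numerator $\sum_s \epsilon(s)\tilde\chi({}^s w)$ is nonzero (which exists by the regularity hypothesis on $\tilde\chi$) and solve. Finally, surjectivity onto the set of all supercuspidals of $\mathrm{PGL}(2,F)$ follows by running the construction of section \ref{setup} backwards, together with Bushnell--Henniart's exhaustion result, completing the proof.
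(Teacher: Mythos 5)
Your high-level strategy---compare the conjectural formula to a known supercuspidal character formula on the stated range, then argue uniqueness---is the right one, but several crucial steps in your outline either misdescribe what happens or elide work that the paper has to do at length, and these are genuine gaps rather than shortcuts.

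First, the known character formula (the paper uses DeBacker's version, Theorem~\ref{DeBacker1}, rather than Sally--Shalika/Silberger directly) on the range $0 < n(w) \le r/2$ is \emph{not} simply a sum of $\chi({}^s z)$ over the Weyl group divided by a Weyl denominator: each summand carries a Weil-index factor $\gamma(\alpha(\phi),{}^w Y)$. These gamma factors are exactly where the interesting fourth roots of unity live. Computing them (Lemma~\ref{weilcalculation}, using the Hasse-invariant and Weil-index identities from Lemmas~\ref{rao1}--\ref{rao3}) is the technical heart of matching $F(\tilde\chi)$ to $\theta_\pi$, and your proposal contains no analogue of it. Relatedly, your claim that the factor $\epsilon(s)=(-1,\zeta)^{\ell(s)}$ ``is the correction factor'' arising from lifting to the cover via the Weyl action~(\ref{eq:weylgroupreal}) is not how it works: as shown in section~\ref{setup}, the Weyl action on $E^*/N(E^*)$ pulled back through $\kappa$ is just $[w]\mapsto[\overline{w}]$ and introduces no sign. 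The $(-1,\zeta)$ enters through the difference $\gamma(\alpha(\chi),Y)$ versus $\gamma(\alpha(\chi),{}^s Y)$, i.e.\ from the gamma factors, not from the double-cover bookkeeping.

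Second, and more importantly, matching on the range $0 < n(w) \le r/2$ does \emph{not} pin down the representation: it only shows $F(\tilde\chi)=\theta_{\pi_\phi}$ for $\phi=\chi\nu^{-1}$ for \emph{any} character $\nu$ of $E^*$ with $\nu|_{F^*}=\aleph_{E/F}$ and $2$-power order, because both sides are insensitive to such $\nu$ on that range. Identifying the unique $\nu$ (namely $\nu={}_F\mu_\chi^{-1}$) requires the separate $n(w)=0$ computation, which is where the Bushnell--Henniart twist actually gets determined---and this involves the identity $\lambda_{E/F}(\psi)=\gamma_F(\zeta,\psi)(-1,\zeta)$ (Lemma~\ref{psilevel}) and a careful analysis of $\alpha(\chi)$. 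Your remark that the twists are ``secretly absorbed'' does not reflect this two-stage structure.

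Third, your uniqueness argument---that two supercuspidals agreeing on a fixed elliptic torus must coincide by linear independence of characters---is too quick and would not close the proof, since agreement is only claimed on the proper subset $\{z: 0 \le n(z) \le r/2\}$, not on all of $T(F)^{reg}$. The paper instead proves Theorems~\ref{samecartans} and~\ref{differentcartans}: the first shows that if two characters from the \emph{same} Cartan agree on the $n(w)=0$ range then the admissible pairs are Weyl-conjugate, and this requires showing a supercuspidal character cannot vanish identically on the $n(w)=0$ locus (a nontrivial vanishing argument, cf.\ Proposition~\ref{case1} and Lemma~\ref{randomlemma1}); the second shows by determinant/norm comparisons that characters from \emph{different} Cartans cannot agree there. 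Neither of these is ``immediate from linear independence.'' You also omit the minor but necessary check that a positive-depth $F(\tilde\chi)$ cannot coincide with a depth-zero supercuspidal character (and vice versa), handled at the end of section~\ref{othercartan}.
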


We will define all of the notation in the above theorem in sections 4-5, including $n(z)$, $\epsilon(\tilde\chi, \Delta^+, \tau)$, and regularity.  We remark that $n(z)$ comes from a canonical filtration on the torus $T(F)$, and is defined in \cite{debacker}.  We wish to make a few comments about the constant $\epsilon(\tilde\chi, \Delta^+, \tau)$.  Firstly, $|\epsilon(\tilde\chi, \Delta^+, \tau)|$ is a known real number in that it has to do with a canonical measure on the Lie algebra.  The subtlety of $\epsilon(\tilde\chi, \Delta^+, \tau)$ is in the fourth roots of unity that appear in it.

There is only one difference between this character formula and the character formula for discrete series of real reductive groups due to Harish-Chandra.  If we were to literally transport the character formula of theorem \ref{Harish-Chandra} to the $p$-adic case, then the denominator $\Delta^0(h, \Delta^+) \rho(\tilde h)$ would take values in $\overline{F}^*$, which would be problematic since characters must take values in $\mathbb{C}^*$.  Therefore, we introduce a $\mathbb{C}^*$-valued character $\tau$ of $\overline{F}^*$ into the denominator in order that the denominator take values in $\mathbb{C}^*$.  In fact, $\tau$ will only need to be a $\mathbb{C}^*$-valued character of $E^*$.

Now let $\varphi$ be a supercuspidal Weil parameter for $\mathrm{PGL}(2,F)$.  We will show in section 4 how to construct a regular genuine character, $\tilde\chi$, of $T(F)_{\tau \circ \rho}$, from $\varphi$.  We will then prove the following theorem.

\begin{theorem}\label{gl2theorem2}
In the setting of theorem \ref{gl2theorem1}, the assignment $$\varphi \mapsto \tilde\chi \mapsto \pi(\tilde\chi)$$ is the Local Langlands correspondence for $\mathrm{PGL}(2,F)$.
\end{theorem}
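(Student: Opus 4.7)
My plan is to reduce Theorem \ref{gl2theorem2} to a direct comparison, on the supercuspidal side, between the representation $\pi(\tilde\chi)$ manufactured by Theorem \ref{gl2theorem1} and the representation $\pi_{\chi \cdot {}_F\mu_\chi}$ produced by the Bushnell--Henniart--Kutzko correspondence. Since both (\ref{eq:parameters1}) and (\ref{eq:parameters2}) are already known to be bijections, and the classical assignment $\varphi \mapsto \pi_{\chi \cdot {}_F\mu_\chi}$ is bijective by \cite{bushnellhenniart}, it will suffice to exhibit a single equality $\pi(\tilde\chi) = \pi_{\chi \cdot {}_F\mu_\chi}$ for an arbitrary supercuspidal parameter $\varphi$ with attached admissible pair $(E/F,\chi)$.

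The first step is to unpack the map $\varphi \mapsto \tilde\chi$ from Section \ref{setup}. Starting from $(E/F,\chi)$ one identifies $T(F) = E^*/F^*$ and lifts $\chi$ along a chosen section of the double cover $T(F)_{\tau \circ \rho} \to T(F)$ to obtain $\tilde\chi$; because $\tau$ is built from $\aleph_{E/F}$, the discrepancy between a genuine character of the cover and an honest character of $T(F)$ is precisely measured by a quadratic sign depending on $E/F$. I would compute this discrepancy explicitly and show that translating $\tilde\chi$ back to a character of $E^*$ through any section yields $\chi \cdot {}_F\mu_\chi$, so the cover is literally absorbing Kutzko's finite-order twist into the genuine-character formalism.

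The second step is to evaluate the character formula in Theorem \ref{gl2theorem1} for this $\tilde\chi$ and match it against the Shimizu--Silberger formulas for supercuspidal characters of $\mathrm{PGL}(2,F)$ evaluated at $\pi_{\chi \cdot {}_F\mu_\chi}$. Concretely, I would rewrite
\[
\frac{\sum_{s\in W} \epsilon(s)\tilde\chi({}^s w)}{\tau(\Delta^0(z,\Delta^+))(\tau\circ\rho)(w)}
\]
as a ratio of expressions on $E^*$, cancel the $\tau$-dependence using the relations $\tau \circ 2\rho = (\lambda \circ \rho)^2$ and the definition of $\epsilon(s)$ in terms of the Hilbert symbol $(-1,\zeta)$, and verify that the result coincides with Silberger's formula for the supercuspidal attached to $\chi \cdot {}_F\mu_\chi$ on the locus $0 \le n(z) \le r/2$. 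By the uniqueness assertion in Theorem \ref{gl2theorem1}, this forces $\pi(\tilde\chi) = \pi_{\chi \cdot {}_F\mu_\chi}$, which is Theorem \ref{gl2theorem2}.

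The principal obstacle is the bookkeeping of the sign in the passage from $\tilde\chi$ to its honest restriction, together with tracking of the constant $\epsilon(\tilde\chi,\Delta^+,\tau)$: this is exactly where the notorious finite-order character ${}_F\mu_\chi$ of Bushnell--Henniart--Kutzko is encoded, and the whole point of working on the $\tau\circ\rho$-cover is that the quadratic Gauss-sum data underlying ${}_F\mu_\chi$ appears automatically in the Weyl numerator and the $(\tau\circ\rho)(w)$ denominator. Depth-zero supercuspidals require the separate treatment of Section \ref{depthzerochapter} (since the canonical locus and the filtration index $n(z)$ behave differently there), but the strategy is identical: identify $\tilde\chi$ with $\chi \cdot {}_F\mu_\chi$ up to a controlled sign, compare with the Sally--Shalika character formulas from \cite{sallyshalika}, and invoke uniqueness.
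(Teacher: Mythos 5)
Your overall reduction is correct and is exactly the paper's strategy: since by \cite{bushnellhenniart} the local Langlands correspondence is $\varphi = \phi_\chi \mapsto \pi_{\chi\,{}_F\mu_\chi}$, and $\pi(\tilde\chi)$ is characterized by a uniqueness property, it suffices to verify $F(\tilde\chi) = \theta_{\pi_{\chi\,{}_F\mu_\chi}}$ on the canonical locus $\{0 \le n(z) \le r/2\}$ and invoke uniqueness. That is precisely what Sections \ref{theproofofcharacters}, \ref{samecartan}, \ref{othercartan}, and \ref{depthzerochapter} establish.

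However, your Step 1 contains a genuine misconception about where the twist ${}_F\mu_\chi$ lives, and if you tried to carry it out it would fail. The map $\varphi \mapsto \tilde\chi$ constructed in Section \ref{setup} does \emph{not} change $\chi$: since $\chi|_{F^*} = \aleph_{E/F}$, the character $\chi$ is trivial on $N(E^*)$ and therefore descends literally, with no sign correction, to the genuine character $\tilde\chi$ of $E^*/N(E^*) \cong T(F)_{\tau\circ\rho}$ defined by $\tilde\chi([w]) := \chi(w)$. Pulling $\tilde\chi$ back to $E^*$ recovers $\chi$, not $\chi\,{}_F\mu_\chi$. So the cover does not ``absorb the twist'' at the level of the parametrizing data. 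Instead, the entire content of the theorem is that the \emph{denominator} in the character formula $F(\tilde\chi)$ — the factor $\tilde\tau(w - \overline{w})\,|D(w)|^{1/2}$ together with the constant $\epsilon(\tilde\chi,\Delta^+,\tau)$ — secretly encodes ${}_F\mu_\chi$. Concretely, in Section \ref{theproofofcharacters} one rewrites DeBacker's Gauss-sum factor $\gamma(\alpha(\chi), Y)$ via Lemma \ref{gammafactors} in terms of a quadratic character $\nu$, shows that on the $n(w)=0$ locus $F(\tilde\chi)$ matches $\theta_{\pi_{\chi\nu^{-1}}}$ only if $\nu^{-1}$ equals the unramified or Langlands-constant-valued character ${}_F\mu_\chi$, and only then does ${}_F\mu_\chi$ appear. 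So the twist is forced out by the Weil-index and Langlands-constant bookkeeping in your Step 2, not by a computation with sections of the cover in Step 1.

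Two smaller points: the paper matches against DeBacker's positive-depth formula (Theorem \ref{DeBacker1}) and Kutzko's depth-zero formula (Theorem \ref{depthzerocharacters}) rather than Shimizu--Silberger or Sally--Shalika directly, though this is a choice of source; and your appeal to the uniqueness clause of Theorem \ref{gl2theorem1} is fine, but one should note that in the paper the proofs of Theorems \ref{gl2theorem1} and \ref{gl2theorem2} are intertwined — the same match $F(\tilde\chi) = \theta_{\pi_{\chi\,{}_F\mu_\chi}}$ on the canonical locus, plus the two uniqueness theorems \ref{samecartans} and \ref{differentcartans}, prove both simultaneously.
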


We wish to make a few comments regarding generalizations.  Firstly, we have shown that everything in this paper can be generalized to $\mathrm{PGL}(n,F)$ and $\mathrm{GL}(n,F)$, $n$ a prime, under the condition that if $n$ is odd, then the residual characteristic of $F$ is greater than $2n$ (see \cite{adrian}). Moreover, jointly with Joshua Lansky, we have been able to generalize this work to more general reductive groups, such as $\mathrm{PGSp}(4,F)$ (see \cite{adrianlansky}), using the theory of ``groups of type $L$'' (see \cite{roe}).  Using the theory of ``groups of type $L$'', we expect to be able to generalize our work to at least the setting of depth zero supercuspidal $L$-packets.  While there is hope for positive depth supercuspidal $L$-packets, the character formulas in the positive depth setting are quite complicated, and we are unsure whether it is possible to unpackage them in a uniform way in order to match them up with our conjectural formulas.

\section{Notation and Definitions}

Let $F$ denote a local non-archimedean field of characteristic zero, $\mathfrak{o}_F$ its ring of integers, and $\mathfrak{p}_F$ the maximal ideal of $\mathfrak{o}_F$. We let $p$ denote a uniformizer of $F$.  Let $k_F$ denote the residue field of $F$ with cardinality $q$. Every element $x \in F^*$ can be written uniquely as $x = u p^n$ for some unit $u \in \mathfrak{o}_F^*$, and some $n \in \mathbb{Z}$.  We set $v_F(x) = n$. We let $W_F$ denote the Weil group of $F$ (see \cite{bushnellhenniart} for its definition).  We choose an element $\Phi \in \mathrm{Gal}(\overline{F}/F)$ whose inverse induces on $\overline{k_F}$ the map $x \mapsto x^q$.  The \emph{level} of a character $\psi : F \rightarrow \mathbb{C}^*$ is defined to be the least integer $d$ such that $\mathfrak{p}_F^d \subset \mathrm{ker}(\psi)$.  The \emph{level} of a character $\chi : F^* \rightarrow \mathbb{C}^*$ is defined to be the least integer $n \geq 0$ such that $\chi$ is trivial on $1 + \mathfrak{p}_F^{n+1}$.  These definitions will be used for field extensions of $F$ as well, where they will continue to be integer valued.  For the definition of \emph{depth} of an irreducible admissible complex representation, see \cite{moyprasad}.  Throughout, we fix once and for all an additive character $\psi$ of $F$ of level one.  If $E/F$ is a separable extension, $N$ (or sometimes $N_{E/F}$) will denote the norm map from $E$ to $F$, $\mathrm{Tr}_{E/F}$ will denote the trace map from $E$ to $F$, and $\mathrm{Aut}(E/F)$ will denote the group of automorphisms of $E$ that fix $F$ pointwise.  When we write a decomposition $w = p^n u$ where $w \in F^*$, we mean that $u \in \mathfrak{o}_F^*$.  If $E/F$ is quadratic and $E = F(\delta)$, we will frequently decompose an element $w \in E$ as $w = p^n u + p^m v \delta$ where we are viewing $E$ as a vector space over $F$ with basis $1, \delta$, and $u,v \in \mathfrak{o}_F^*$.  We will write $\overline{w}$ instead of $\upsilon(w)$ where $1 \neq \upsilon \in Gal(E/F)$. Let $( , )$ denote the Hilbert symbol of $F$.  If $E/F$ is Galois, we let $\aleph_{E/F}$ denote the local class field theory character of $F^*$ relative to the extension $E/F$.  Let $| \ |_F$ denote the standard normalized absolute value on $F$.  If $K/F$ is a finite separable extension, we let $| \ |_K$ denote the unique absolute value of $K$ that extends the absolute value on $F$.  In section 5, $\tilde\tau$ will denote any character of $E^*$ whose restriction to $F^*$ is $\aleph_{E/F}$, where $E/F$ is a tame quadratic extension, and we will set $\tau := \tilde\tau \ | \ |_{E}$.  We will generally write $| \ |$ when it is clear which field we are referring to.

If $$1 \rightarrow \mathbb{Z} / 2 \mathbb{Z} \rightarrow A \rightarrow B \rightarrow 1$$ is an exact sequence of groups, then a character $\beta$ of $A$ is said to be \emph{genuine} if $\beta|_{\mathbb{Z} / 2 \mathbb{Z}}$ is not trivial (that is, $\beta$ does not arise from a character of $B$).  When we say that a $2$-fold cover of a group (as above) \emph{splits}, we mean that the exact sequence splits.  If $G$ denotes any group, then $G^{ab}$ denotes its abelianization.  If $G$ is a connected reductive group defined over $F$ and $T$ is a maximal torus in $G$ defined over $F$, then we will frequently write the finite relative Weyl group as $W$ instead of $W(G(F),T(F))$.  We will write $T(F)^{reg}$ for the set of regular elements in $T(F)$.  In particular, if $T(F) = E^*$, where $E/F$ is a quadratic extension, then the regular elements of $T(F)$ are the set of elements $w \in E^*$ such that $w \neq \overline{w}$.  If $B$ is a normal subgroup of $A$ and $a \in A$, then we will write $[a]$ to denote the class of $a$ in $A/B$.

\section{Discrete series Langlands parameters and character formulas for real groups}\label{realgroupschapter}

In order to motivate the theory that we wish to develop for $p$-adic groups, we describe the corresponding theory over $\mathbb{R}$ since this is what our theory is based upon.  More information can be found in \cite{adamsvogan}.

Let $G$ be a connected reductive group over $\mathbb{R}$ that contains a compact torus.  It is known that this is equivalent to $G(\mathbb{R})$ having discrete series representations.  Recall definition \ref{rhocoverdefinition}.

\begin{definition}
A genuine character $\tilde\chi$ of $T(\mathbb{R})_{\rho}$ is called \emph{regular} if ${}^s \tilde\chi \neq \tilde\chi$ for every $s \in W(G(\mathbb{R}),T(\mathbb{R}))$ such that $s \neq 1$, where ${}^s \tilde\chi(t, \lambda) := \tilde\chi(s^{-1}(t, \lambda))$.
\end{definition}

\begin{definition}
Let $t$ be an indeterminate and let $k$ denote the rank of $G$.  For $h \in G$, define the Weyl denominator $D_G(h)$ by
$$\mathrm{det}(t + 1 - Ad(h)) = D_G(h)t^k + ... (terms \ of \ higher \ order).$$
\end{definition}

\noindent Then if $\Delta$ is the set of roots of $T$ in $G$, $$D_G(h) = \prod_{\alpha \in \Delta} (1 - \alpha(h)).$$

\begin{definition}\label{delta0}
Let $G$ be a connected reductive group over $\mathbb{R}$, $T \subset G$ a maximal torus over $\mathbb{R}$.  Let $\Delta^+$ be a set of positive roots of $G$ with respect to $T$.  Define $$\Delta^0(h, \Delta^+) := \prod_{\alpha \in \Delta^+} (1 - \alpha^{-1}(h)), \ h \in T(\mathbb{R}).$$
\end{definition}

\noindent Let $\rho$ denote half the sum of the positive roots.  Then if the cardinality of $\Delta^+$ is $n$, we have $$(-1)^n D_G(h) = \Delta^0(h, \Delta^+)^2 (2 \rho)(h).$$
Then, if we define $|\rho(h)| := |2 \rho(h)|^{\frac{1}{2}}$ we get that $$|D_G(h)|^{\frac{1}{2}} = |\Delta^0(h, \Delta^+)| |\rho(h)|.$$

In general, $\rho$ is not in $X^*(T)$.  Therefore, $\rho$ is not a well-defined character of $T(\mathbb{R})$.  However, $\rho$ is a well-defined character of $T(\mathbb{R})_{\rho}$.  If $\tilde h \in T(\mathbb{R})_{\rho}$ maps to $h \in T(\mathbb{R})$ via the canonical projection, then $$|D_G(h)|^{\frac{1}{2}} = |\Delta^0(h, \Delta^+)| |\rho(h)| = |\Delta^0(h, \Delta^+)| |\rho(\tilde h)|.$$
We now present the classification of discrete series representations of $G(\mathbb{R})$.

\begin{theorem}\label{Harish-Chandra}(Harish-Chandra)
Let G be a connected reductive group, defined over $\mathbb{R}$.  Suppose that G contains a real Cartan subgroup T that is compact mod center.  Let $\Delta^+$ be a set of positive roots of G with respect to T.  Let $\rho := \frac{1}{2} \displaystyle\sum_{\alpha \in \Delta^+} \alpha$.  Let $\tilde\chi$ be a genuine character of $T(\mathbb{R})_{\rho}$ that is regular.  Let $W := W(G(\mathbb{R}),T(\mathbb{R}))$ be the relative Weyl group of $G(\mathbb{R})$ with respect to $T(\mathbb{R})$.  Let $\epsilon(s) := (-1)^{\ell(s)}$ where $\ell(s)$ is the length of the Weyl group element $s \in W$. Let $T(\mathbb{R})^{reg}$ denote the regular set of $T(\mathbb{R})$.  Then there exists a unique constant $\epsilon(\tilde\chi, \Delta^+) = \pm 1$, depending only on $\tilde\chi$ and $\Delta^+$, and a unique discrete series representation of $G(\mathbb{R})$, denoted $\pi(\tilde\chi)$, such that
$$\theta_{\pi(\tilde\chi)}(h) = \epsilon(\tilde\chi, \Delta^+) \frac{\displaystyle\sum_{s \in W} \epsilon(s) \tilde\chi({}^s \tilde h)}{\Delta^0(h, \Delta^+) \rho(\tilde h)}, \ h \in T(\mathbb{R})^{reg}$$ where $\tilde h \in T(\mathbb{R})_{\rho}$ is any element such that $\Pi(\tilde h) = h$.  Moreover, every discrete series character of $G(\mathbb{R})$ is of this form.
\end{theorem}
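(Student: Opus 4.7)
The plan is to derive the theorem from Harish-Chandra's classical classification of discrete series by regular Harish-Chandra parameters, treating the passage to the $\rho$-cover $T(\mathbb{R})_{\rho}$ as a bookkeeping device that packages the half-integral shift by $\rho$ into an honest (genuine) character. The classical form of the theorem assigns to each regular weight $\lambda$ on $\mathfrak{t}$ (one for which $\langle \lambda,\alpha^{\vee}\rangle\neq 0$ for every root and $\lambda-\rho$ exponentiates to a character of $T(\mathbb{R})$) a unique discrete series $\pi_{\lambda}$, and the map $\lambda\mapsto\pi_{\lambda}$ descends to a bijection between $W$-orbits of such $\lambda$ and discrete series representations.

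First I would set up the dictionary. Given a regular $\lambda$, define a function on $T(\mathbb{R})_{\rho}$ by $\tilde\chi_{\lambda}(t,\zeta):=(\lambda-\rho)(t)\,\zeta$, where $\lambda-\rho$ is interpreted as the character of $T(\mathbb{R})$ it integrates to and $\zeta$ is the tautological genuine character on the cover (satisfying $\zeta^{2}=2\rho(t)$). A short check using the defining pullback shows that $\tilde\chi_{\lambda}$ is a well-defined genuine character of $T(\mathbb{R})_{\rho}$, that classical regularity of $\lambda$ is equivalent to ${}^{s}\tilde\chi_{\lambda}\neq\tilde\chi_{\lambda}$ for every $s\neq 1$ under the Weyl action (\ref{eq:weylgroupreal}), and that the assignment $\lambda\mapsto\tilde\chi_{\lambda}$ descends to a $W$-equivariant bijection. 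In particular, the source side of Harish-Chandra's classical theorem is identified with $W$-orbits of regular genuine characters of $T(\mathbb{R})_{\rho}$.

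Next I would verify that the right-hand side of the claimed formula equals the classical Weyl character formula for $\theta_{\pi_{\lambda}}$. For the denominator, since $\rho$ is now an honest character of $T(\mathbb{R})_{\rho}$ with $\rho(\tilde h)^{2}=2\rho(h)$, a direct computation yields
$$\Delta^{0}(h,\Delta^{+})\,\rho(\tilde h)=\prod_{\alpha\in\Delta^{+}}\bigl(1-\alpha^{-1}(h)\bigr)\,\alpha^{1/2}(\tilde h)=\prod_{\alpha\in\Delta^{+}}\bigl(\alpha^{1/2}(\tilde h)-\alpha^{-1/2}(\tilde h)\bigr),$$
which is the standard Weyl denominator, with square equal to $(-1)^{|\Delta^{+}|}D_{G}(h)$. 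For the numerator, the cocycle $e^{s^{-1}\rho-\rho}(t)$ that twists the Weyl action on the cover in (\ref{eq:weylgroupreal}) is precisely what is needed so that $\sum_{s\in W}\epsilon(s)\tilde\chi_{\lambda}({}^{s}\tilde h)$ unfolds to $\sum_{s\in W}\det(s)e^{s\lambda}(\tilde h)$. Thus the formula in the theorem reproduces the classical Weyl numerator over Weyl denominator, and the sign $\epsilon(\tilde\chi,\Delta^{+})=\pm 1$ absorbs the usual $(-1)^{q}$ with $q=\tfrac{1}{2}\dim(G(\mathbb{R})/K)$.

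The one step I would not attempt to redo is the content of Harish-Chandra's theorem itself, namely that this formula actually equals the character of an irreducible tempered representation; this rests on his analysis of invariant eigendistributions on the regular set, their boundary behavior across the walls, and the Plancherel formula. I would invoke that as a black box. The only original verification is the translation between the classical parameterization and the $T(\mathbb{R})_{\rho}$-parameterization, which — once Definition \ref{rhocoverdefinition} and the Weyl action (\ref{eq:weylgroupreal}) are in hand — is routine. The main subtlety to keep an eye on is that the cocycle in (\ref{eq:weylgroupreal}) be chosen with the correct sign so that $\epsilon(s)\tilde\chi_{\lambda}({}^{s}\tilde h)$ is $s$-equivariant in the sense compatible with $\det(s)e^{s\lambda}$; this fixes the normalization that makes $\epsilon(\tilde\chi,\Delta^{+})$ independent of the chosen $\tilde h\in\Pi^{-1}(h)$, giving the uniqueness clause in the statement.
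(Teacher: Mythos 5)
The paper does not actually prove Theorem \ref{Harish-Chandra}: it records the statement as Harish-Chandra's and, for the $\rho$-cover reformulation specifically, refers the reader to the literature (the sentence immediately following the theorem cites Adams's notes on computing global characters). There is therefore no in-paper proof to compare against; your proposal is, in effect, supplying the derivation the paper omits. What you do---translate Harish-Chandra's classical parameterization by regular Harish-Chandra parameters $\lambda$ with $\lambda-\rho\in X^*(T(\mathbb{R}))$ into genuine characters of $T(\mathbb{R})_\rho$ via $\tilde\chi_\lambda(t,\zeta)=(\lambda-\rho)(t)\,\zeta$, unpack $\Delta^0(h,\Delta^+)\,\rho(\tilde h)$ into the usual Weyl denominator $\prod_{\alpha\in\Delta^+}\bigl(\alpha^{1/2}(\tilde h)-\alpha^{-1/2}(\tilde h)\bigr)$, and check that the cocycle in (\ref{eq:weylgroupreal}) makes the numerator unfold into $\sum_s\det(s)e^{s\lambda}$---is the standard and correct way to reconcile the two phrasings, and invoking Harish-Chandra's analytic theorem as a black box is entirely appropriate here.

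One place you should be more careful is the asserted equivalence between classical regularity of $\lambda$ (non-vanishing of every $\langle\lambda,\alpha^\vee\rangle$, i.e., free action of the full complex Weyl group) and the paper's condition ${}^s\tilde\chi\neq\tilde\chi$ for $1\neq s\in W(G(\mathbb{R}),T(\mathbb{R}))$. These agree only when the \emph{relative} Weyl group equals the full Weyl group. In general the relative group is a proper subgroup---for $\mathrm{SL}(2,\mathbb{R})$ with its compact Cartan it is trivial---so the paper's regularity condition is strictly weaker, and a relative-Weyl-regular genuine character need not come from a classically regular $\lambda$; your dictionary then fails to be a bijection onto the set the theorem quantifies over. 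For $\mathrm{PGL}(2,\mathbb{R})$, the only real group the paper actually uses, the two Weyl groups coincide and nothing goes wrong, so your argument does establish everything the paper needs. But as written it does not prove the theorem in the generality stated, and you should either restrict to the case $W(G(\mathbb{R}),T(\mathbb{R}))=W(\mathfrak{g},\mathfrak{t})$ or replace ``regular'' by regularity for the full Weyl group before claiming the bijection.
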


This character formula is a variant of the formula found in the literature, using $\rho$-covers.  A reference for this theorem is \cite[Definition 3.1]{adams1}.  We can be more specific about the constant $\epsilon(\tilde\chi, \Delta^+)$.  In particular, $\epsilon(\tilde\chi, \Delta^+) = (-1)^{\ell(s)}$ where $s \in W$ makes $d \tilde\chi$ dominant for $\Delta^+$, $d \tilde\chi$ denoting the differential of $\tilde\chi$.

It is important to note that while the numerator and denominator of the character formula live on $T(\mathbb{R})_{\rho}$, the quotient factors to a legitimate function on $T(\mathbb{R})^{reg}$.

We conclude the section by describing the local Langlands correspondence for discrete series representations of $\mathrm{PGL}(2, \mathbb{R})$. Fix a positive set of roots $\Delta^+$ of $G$ with respect to $T$.  Let $\phi : W_{\mathbb{R}} \rightarrow \mathrm{SL}(2, \mathbb{C})$ be a discrete series Weil parameter.  By the theory in \cite{adamsvogan}, $\phi$ canonically gives rise to a genuine character $\tilde\chi$ of $T(\mathbb{R})_{\rho}$.  By Harish-Chandra's discrete series theorem, $\tilde\chi$ canonically gives rise to a unique discrete series representation, denoted $\pi(\tilde\chi)$, whose distribution character is $$F(\tilde\chi)(h) :=  \epsilon(\tilde\chi, \Delta^+) \frac{\displaystyle\sum_{s \in W} \epsilon(s) \tilde\chi({}^s \tilde h)}{\Delta^0(h, \Delta^+) \rho(\tilde h)}, \ h \in T(\mathbb{R})^{reg}$$ where $\tilde h \in T(\mathbb{R})_{\rho}$ is any element such that $\Pi(\tilde h) = h$.  Then the map $\phi \mapsto \pi(\tilde\chi)$ is the local Langlands correspondence for discrete series representations of $\mathrm{PGL}(2,\mathbb{R})$.  The rest of the paper will be devoted to proving the analogous result for $\mathrm{PGL}(2,F)$, where $F$ is a local non-Archimedean field of characteristic zero.

\section{Positive depth supercuspidal character formulas for $\mathrm{PGL}(2,F)$}\label{positivedepthpaper}

In this section we present our realization of the positive depth local Langlands correspondence for $\mathrm{PGL}(2,F)$, modeled on the local Langlands correspondence for $PGL(2,\mathbb{R})$.  We first briefly review the construction of the local Langlands correspondence for $\mathrm{GL}(2,F)$ as in \cite{bushnellhenniart}, and then we recall some necessary results on the Weil index.  In section \ref{setup}, we attach to a Langlands parameter for $\mathrm{PGL}(2,F)$ a conjectural character formula for a supercuspidal representation of $\mathrm{PGL}(2,F)$.  This formula is written in terms of genuine functions on a double cover of an elliptic torus.  In that section, we will unwind the formula so that it can be written in terms of functions on an elliptic torus, so that we can compare it with the known character formula of a supercuspidal representation of $\mathrm{PGL}(2,F)$ (which is written in terms of functions on an elliptic torus).  In particular, we will eventually pull back our character formula to a character formula on $\mathrm{GL}(2,F)$, and then compare this formula with the known character formula of a supercuspidal representation of $\mathrm{GL}(2,F)$ (recall that the representations of $\mathrm{PGL}(2,F)$ are the representations of $\mathrm{GL}(2,F)$ with trivial central character).  In section \ref{theconstantepsilon}, we recall the formula for the character of a positive depth supercuspidal representation of $\mathrm{GL}(2,F)$, and we define the constant $\epsilon(\tilde\chi, \Delta^+, \tau)$ that appears in our character formula in theorem \ref{gl2theorem1}.  In section \ref{decompositions}, we recall the Moy-Prasad filtrations on an elliptic torus of $\mathrm{GL}(2,F)$, and we then explicitly determine the depth zero elements in such an elliptic torus.  In section \ref{theproofofcharacters}, we prove that the conjectural character formula that we attached to a Langlands parameter for $\mathrm{PGL}(2,F)$ agrees, far away from the identity, with the character formula of the supercuspidal representation that is attached to the Langlands parameter via the local Langlands correspondence (in our setting, if $E^*$ is an elliptic torus in $\mathrm{GL}(2,F)$, then the elements in $E^*$ that are \emph{far away from the identity} are just the elements in $E^* \setminus F^* (1 + \mathfrak{p}_E)$.  The rest of section \ref{positivedepthpaper} is devoted to showing that there are no other supercuspidal representations of $\mathrm{PGL}(2,F)$ whose character agrees with ours, far away from the identity.

\subsection{Local Langlands correspondence for $\mathrm{GL}(2,F)$}\label{llcreview}

In this section, we describe the construction of the local Langlands correspondence for $\mathrm{GL}(2,F)$ as explained in \cite{bushnellhenniart}.

Let $E/F$ be a tamely ramified quadratic extension and $\chi$ a character of $E^*$.  Recall that $N$ denotes the norm map from $E$ to $F$.

\begin{definition}
The pair $(E/F, \chi)$ is called an \emph{admissible pair} if

(i) $\chi$ does not factor through $N$ and

(ii) If $\chi|_{1 + \mathfrak{p}_E}$ factors through $N$, then $E/F$ is unramified.

\end{definition}

We write $\mathbb{P}_2(F)$ for the set of $F$-isomorphism classes of admissible pairs.  For more information about admissible pairs, see \cite[Section 18]{bushnellhenniart}.

\begin{definition}\ref{minimaladmissible}
Let $(E/F, \chi)$ be an admissible pair where $\chi$ is level $n$.  We say that $(E/F, \chi)$ is \emph{minimal} if $\chi|_{1 + \mathfrak{p}_E^n}$ does not factor through $N$.
\end{definition}

Let $\mathbb{A}_2^0(F)$ denote the set of equivalence classes of all irreducible supercuspidal representations of $\mathrm{GL}(2,F)$.  We then have the following theorem:

\begin{theorem}{\label{positivedepth}}{\cite[Section 20.2]{bushnellhenniart}}
Suppose that the residual characteristic of $F$ is not $2$.  There is a map $(E/F, \chi)$ $\mapsto \pi_{\chi}$ which induces a bijection $$\mathbb{P}_2(F) \rightarrow \mathbb{A}_2^0(F).$$

Furthermore, if $(E/F, \chi) \in \mathbb{P}_2(F)$, then:

(i) $\omega_{\pi_{\chi}} = \chi|_{F^*}$

(ii) if $\phi$ is a character of $F^*$, then $\pi_{\chi \phi_E} = \phi \pi_{\chi}$.
\end{theorem}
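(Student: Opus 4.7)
The plan is to follow the construction of Bushnell--Kutzko type theory in the form developed by Bushnell--Henniart. First I would reduce to the case of a \emph{minimal} admissible pair: given $(E/F,\chi)$ with $\chi$ of level $n$, either $\chi|_{1+\mathfrak{p}_E^n}$ is non-trivial on an element not in $\ker N$, in which case we are already minimal, or else we may write $\chi = \chi_0 \cdot (\phi\circ N)$ for some character $\phi$ of $F^*$ and an admissible $\chi_0$ of strictly smaller level, and induct. Admissibility condition (ii) is what guarantees this peeling-off process can be carried out without the remaining character becoming non-admissible. Once the map is constructed on minimal pairs, the formula $\pi_{\chi\phi_E} = \phi\,\pi_\chi$ will extend it to all of $\mathbb{P}_2(F)$.

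For a minimal admissible pair I would construct $\pi_\chi$ by compact induction. Embed $E^*\hookrightarrow\mathrm{GL}(2,F)$ via the regular representation of $E$ on itself viewed as an $F$-vector space of dimension $2$. Using a Moy--Prasad filtration centred at the point of the building fixed by $E^*$, build a compact-mod-centre subgroup $J\subset\mathrm{GL}(2,F)$ (so $J = E^*\cdot U$ where $U$ is the appropriate filtration subgroup of $\mathrm{GL}(2,\mathfrak{o}_F)$, or an Iwahori filtration if $E/F$ is ramified), and extend $\chi$ to a representation $\Lambda_\chi$ of $J$ using $\chi$ on $E^*$ and a character of $U$ built from $\psi\circ\mathrm{Tr}$ paired with an element of the Lie algebra realising $\chi$. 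Define
\[ \pi_\chi := c\text{-}\mathrm{Ind}_J^{\mathrm{GL}(2,F)}(\Lambda_\chi). \]
Now I would verify the listed properties: (i) $F^*$ is central in $\mathrm{GL}(2,F)$ and lies in $J$, so the central character of the compactly induced representation equals $\Lambda_\chi|_{F^*} = \chi|_{F^*}$; (ii) twisting $\chi$ by $\phi\circ N_{E/F}$ on $E^*$ corresponds on $J$ to twisting $\Lambda_\chi$ by $\phi\circ\det$, and compact induction commutes with twisting by a character of the ambient group, giving $\pi_{\chi\phi_E}\cong\phi\,\pi_\chi$; and the irreducibility/supercuspidality of $\pi_\chi$ follows from Mackey's intertwining criterion once one checks that only $g\in J$ can intertwine $\Lambda_\chi$ with itself, an explicit calculation using minimality of the pair.

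Finally I would establish that the map $(E/F,\chi)\mapsto\pi_\chi$ is a bijection. Injectivity comes from running the intertwining analysis in reverse: an isomorphism $\pi_\chi\cong\pi_{\chi'}$ forces the two embedded tori to be $\mathrm{GL}(2,F)$-conjugate (so $E\cong E'$ over $F$), and on a common $E^*$ the characters must agree, recovering $(E/F,\chi)$ up to $F$-isomorphism. Surjectivity --- showing every supercuspidal arises --- is where the real work lies: given $\pi\in\mathbb{A}_2^0(F)$, one analyses its minimal $K$-types (fundamental strata) and shows that the stabilizer of any such stratum in $\mathrm{GL}(2,F)$ contains a compact-mod-centre subgroup of the form $J$ above, whose pro-$p$-part is attached to some quadratic $E/F$ and whose character contribution determines a $\chi$ on $E^*$ satisfying admissibility.

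The main obstacle is this last step: exhaustion is essentially a full classification of the supercuspidal spectrum via strata, and the reason condition (ii) of admissibility is imposed is exactly so that no admissible pair gets silently identified with another through the $\chi\mapsto\chi(\phi\circ N)$ ambiguity, and no supercuspidal is left unaccounted for. The construction side is relatively formal once $J$ and $\Lambda_\chi$ are correctly chosen; matching this to an arbitrary supercuspidal $\pi$ requires the full Moy--Prasad / minimal $K$-type machinery specialised to $\mathrm{GL}(2,F)$.
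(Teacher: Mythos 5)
The paper does not prove this theorem; it is stated as a citation of \cite[Section 20.2]{bushnellhenniart}, where it summarizes the classification of supercuspidal representations of $\mathrm{GL}(2,F)$ by admissible pairs developed over several chapters of that book. Your sketch is a faithful reconstruction of the Bushnell--Henniart strategy --- reduction to minimal pairs via peeling off twists $\phi\circ N$, compact induction from a compact-mod-centre group $J$, verification of (i) and (ii) by formal properties of $c$-$\mathrm{Ind}$, irreducibility and supercuspidality via Mackey intertwining, and exhaustion via a strata/minimal $K$-type analysis --- and you correctly flag the exhaustion step as carrying the weight.

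One genuine imprecision: the theorem includes the level-zero admissible pairs (where admissibility forces $E/F$ unramified), and there your description of the inducing datum fails. You build $\Lambda_\chi$ on $J = E^*U$ from $\chi$ on $E^*$ together with \emph{a character} of $U$ coming from $\psi\circ\mathrm{Tr}$ paired against a Lie-algebra element; that is the Heisenberg-style construction for positive level. For level-zero $\chi$ one must instead take $J = F^*\mathrm{GL}(2,\mathfrak{o}_F)$ and let $\Lambda_\chi$ be the inflation of a cuspidal Deligne--Lusztig representation $\pm R_{\mathbb{T},\theta}$ of $\mathrm{GL}(2,\mathbb{F}_q)$, where $\theta$ is the character of $\mathbb{F}_{q^2}^*$ induced by $\chi|_{\mathfrak{o}_E^*}$, extended to $F^*$ so as to match $\chi|_{F^*}$; in particular $\Lambda_\chi$ is not one-dimensional and there is no $\alpha(\chi)\in\mathfrak{p}_E^{-n}$ to pair against $\psi\circ\mathrm{Tr}$. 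The present paper relies on exactly this depth-zero construction when it invokes Proposition~\ref{carterformula} in section~\ref{depthzerochapter}, so the case needs to be treated separately rather than being folded into your positive-level recipe.
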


Let $\mathbb{G}_2^0(F)$ be the set of equivalence classes of irreducible smooth two-dimensional representations of $W_F$. Recall that there is a local Artin reciprocity isomorphism $W_E^{ab} \cong E^*$.  Then, if $(E/F, \chi) \in \mathbb{P}_2(F)$, $\chi$ gives rise to a character of $W_E^{ab}$, which we can pullback to a character, also denoted $\chi$, of $W_E$.  We can then form the induced representation $\phi_{\chi} = Ind_{W_E}^{W_F} \chi$ of $W_F$.

If $K/F$ is a finite separable extension and $\psi$ is a character of $F$ where $\psi \neq 1$, let $\lambda_{K/F}(\psi)$ denote the Langlands constant, as in \cite[Section 34.3]{bushnellhenniart}.

\begin{definition}
Let $(E/F, \chi)$ be an admissible pair in which $E/F$ is unramified.  Define ${}_F \mu_{\chi}$ to be the unique quadratic unramified character of $E^*$.
\end{definition}

Let $\mu_F$ denote the group of roots of unity in $F$ of order prime to the residual characteristic of $F$. Let $E/F$ be a totally tamely ramified quadratic extension, let $\varpi$ be a uniformizer of $E$, and let $\beta \in E^*$.  Since $\mathfrak{o}_E^* = \mu_E (1 + \mathfrak{p}_E) = \mu_F (1 + \mathfrak{p}_E)$, there is a unique root of unity $\Gamma(\beta, \varpi) \in \mu_F$ such that $$\beta \varpi^{-v_E(\beta)} = \Gamma(\beta, \varpi) \ (\mathrm{mod} \ (1 + \mathfrak{p}_E)).$$

\begin{definition}

(i) Let $(E/F, \chi) \in \mathbb{P}_2(F)$ be a minimal admissible pair such that $E/F$ is totally tamely ramified.  Let $n$ be the level of $\chi$ and let $\alpha(\chi) \in \mathfrak{p}_E^{-n}$ satisfy $\chi(1+x) = \psi_E(\alpha(\chi) x), x \in \mathfrak{p}_E^n$.  There is a unique character ${}_F \mu_{\chi}$ of $E^*$ such that:

$${}_F \mu_{\chi}|_{1 + \mathfrak{p}_E} = 1, \ \ {}_F \mu_{\chi}|_{F^*} = \aleph_{E/F},$$ $${}_F \mu_{\chi}(\varpi) = \aleph_{E/F}(\Gamma(\alpha(\chi), \varpi)) \lambda_{E/F}(\psi)^n.$$

(ii) Let $(E/F, \chi) \in \mathbb{P}_2(F)$ and suppose that $E/F$ is totally tamely ramified.  Write $\chi = \chi' \chi_E$ for a minimal admissible pair $(E/F, \chi')$ and a character $\chi$ of $F^*$.  Define $${}_F \mu_{\chi} = {}_F \mu_{\chi'}.$$
\end{definition}

\begin{theorem}{$\mathbf{Local \ Langlands \ Correspondence}$}{\label{tamellc}}{\cite[Section 34]{bushnellhenniart}}

Suppose the residual characteristic of $F$ is not $2$.  For $\phi \in \mathbb{G}_2^0(F)$, define $\pi(\phi) = \pi_{\chi {}_F \mu_{\chi}}$ for any $(E/F, \chi) \in \mathbb{P}_2(F)$ such that $\phi \cong \phi_{\chi}$.  The map $$\pi : \mathbb{G}_2^0(F) \rightarrow \mathbb{A}_2^0(F)$$ is the local Langlands correspondence for $\mathrm{GL}(2,F)$.
\end{theorem}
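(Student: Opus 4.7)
The plan is to prove the theorem in three stages: well-definedness of the map $\pi$, bijectivity, and agreement with the correspondence characterized by matching of $L$- and $\varepsilon$-factors. First I would check that $\pi(\phi)$ is independent of the representative $(E/F, \chi) \in \mathbb{P}_2(F)$ with $\phi \cong \phi_\chi$. The admissible pair is unique only up to the Galois action on $E/F$ and, in certain cases, up to twisting by a character of $F^*$ pulled back to $E^*$; using property (ii) of Theorem \ref{positivedepth} together with the transformation behavior of the twist $(E/F, \chi) \mapsto {}_F\mu_\chi$ under these symmetries reduces this to a short check. Bijectivity is then automatic from combining the bijection $\mathbb{P}_2(F) \leftrightarrow \mathbb{A}_2^0(F)$ of Theorem \ref{positivedepth} with the bijection $\mathbb{P}_2(F) \leftrightarrow \mathbb{G}_2^0(F)$ given by $(E/F, \chi) \mapsto \phi_\chi$, noting that $(E/F, \chi) \mapsto (E/F, \chi \cdot {}_F\mu_\chi)$ is a self-bijection on $\mathbb{P}_2(F)$.

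Next I would verify that this bijection agrees with the local Langlands correspondence, which for $\mathrm{GL}(2,F)$ is uniquely characterized by the central character relation together with the compatibility of $\varepsilon$-factors of pairs with characters of $F^*$ (the Henniart--Harris characterization). The central character relation is comparatively elementary: from the classical formula $\det(\mathrm{Ind}_{W_E}^{W_F} \chi) = \chi|_{F^*} \cdot \aleph_{E/F}$, combined with the defining property $({}_F\mu_\chi)|_{F^*} = \aleph_{E/F}$, one gets $\omega_{\pi_{\chi \cdot {}_F\mu_\chi}} = (\chi \cdot {}_F\mu_\chi)|_{F^*} = \chi|_{F^*} \cdot \aleph_{E/F}$, matching $\det(\phi)$, as required by Theorem \ref{positivedepth}(i).

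The main step, and the principal obstacle, is the epsilon factor comparison. After reducing to a minimal admissible pair, on the Galois side one has $\varepsilon(\mathrm{Ind}_{W_E}^{W_F} \chi, \psi) = \lambda_{E/F}(\psi) \cdot \varepsilon(\chi, \psi_E)$ by the standard inductivity of local constants. On the representation side one invokes Kutzko's explicit model of $\pi_{\chi'}$, for $\chi' = \chi \cdot {}_F\mu_\chi$, by compact induction from a representation of the normalizer of a suitable Iwahori-type subgroup, and computes the epsilon factor by direct integration of a matrix coefficient against $\psi$. Matching the two sides forces the twist ${}_F\mu_\chi$ to take exactly the prescribed values: the factor $\lambda_{E/F}(\psi)^n$ on the uniformizer together with $\aleph_{E/F}(\Gamma(\alpha(\chi), \varpi))$ emerge as a Gauss-sum-like residual contribution. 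The hard part will be precisely this detailed matching of the root-of-unity factor $\aleph_{E/F}(\Gamma(\alpha(\chi), \varpi))$ in the totally tamely ramified case; it cannot be read off from soft considerations and is the technical core of Kutzko's original argument, which is what \cite{bushnellhenniart} ultimately formalizes.
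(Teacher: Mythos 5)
This statement is not proved in the paper: it is quoted verbatim from \cite[Section 34]{bushnellhenniart}, and the introduction explicitly attributes the underlying proof to Kutzko (\cite{kutzko1}). There is therefore no ``paper's own proof'' to compare against; the theorem functions in this paper as a black box that is used (via the explicit twist ${}_F\mu_{\chi}$) as input to the character computations in sections \ref{theproofofcharacters}--\ref{othercartan}.

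That said, your outline is a broadly accurate description of the argument that \emph{is} carried out in the cited references, with the correct overall skeleton: well-definedness of the twist $(E/F,\chi) \mapsto \chi\cdot{}_F\mu_\chi$ modulo the Galois action on admissible pairs, bijectivity from the two bijections in Theorem \ref{positivedepth} together with the parametrization of $\mathbb{G}_2^0(F)$ by admissible pairs, the elementary central-character check using $\det(\mathrm{Ind}_{W_E}^{W_F}\chi)=\chi|_{F^*}\cdot\aleph_{E/F}$ (this is Proposition \ref{centralcharacter}), and the $\varepsilon$-factor comparison that isolates ${}_F\mu_\chi$, with the totally tamely ramified case carrying the real work. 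A few imprecisions: the uniqueness criterion you invoke is Henniart's $\varepsilon$-factor characterization (there is no ``Henniart--Harris'' criterion, and for $\mathrm{GL}(2)$ twisting by characters of $F^*$ suffices, which is how Bushnell--Henniart organize it); and the $\varepsilon$-factor on the automorphic side is not computed ``by direct integration of a matrix coefficient against $\psi$'' but via the local functional equation and Whittaker models (equivalently, via the Godement--Jacquet or Kirillov-model approach). With those corrections your sketch matches the shape of the Kutzko/Bushnell--Henniart proof, but in the context of this paper the correct thing to say is simply that Theorem \ref{tamellc} is cited, not proved.
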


\begin{proposition}\label{centralcharacter}{\cite[Section 33]{bushnellhenniart}}
If $\phi \in \mathbb{G}_2^0(F)$ and $\pi = \pi(\phi)$, then $\omega_{\pi} =$ $\mathrm{det}(\phi)$.
\end{proposition}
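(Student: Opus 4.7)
The plan is to reduce the assertion $\omega_\pi = \det(\phi)$ to a direct comparison of two characters of $F^*$, using three inputs: Theorem \ref{positivedepth}(i), the classical formula for the determinant of a two-dimensional induced representation of $W_F$, and the normalization built into the definition of ${}_F\mu_\chi$.

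First, I would fix an admissible pair $(E/F,\chi)\in\mathbb{P}_2(F)$ with $\phi\cong\phi_\chi$; such a pair exists because, in odd residual characteristic, every irreducible two-dimensional smooth representation of $W_F$ is induced from a character of a quadratic extension. By Theorem \ref{tamellc} one has $\pi = \pi(\phi) = \pi_{\chi\cdot {}_F\mu_\chi}$. I would verify that the pair $(E/F, \chi\cdot{}_F\mu_\chi)$ is still admissible: in the unramified case ${}_F\mu_\chi$ is unramified, and in the totally ramified case ${}_F\mu_\chi$ is trivial on $1+\mathfrak{p}_E$, so neither of the two conditions defining admissibility is disturbed. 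Theorem \ref{positivedepth}(i) then gives $\omega_\pi = (\chi\cdot{}_F\mu_\chi)|_{F^*}$.

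Next, I would compute $\det(\phi)$. Writing $\phi = \mathrm{Ind}_{W_E}^{W_F}\chi$, the standard formula for the determinant of a two-dimensional induced Weil-group representation yields $\det(\phi) = (\chi|_{F^*})\cdot\aleph_{E/F}$, where $\chi|_{F^*}$ denotes the restriction of $\chi$ to $F^*$ obtained by first applying the transfer $W_F^{ab}\to W_E^{ab}$ (which corresponds under local class field theory to the inclusion $F^*\hookrightarrow E^*$), and where $\aleph_{E/F}$ arises as the sign character of the $W_F$-action on the two-element coset space $W_F/W_E$. I expect this determinant identity, although classical, to be the step that requires the most careful citation or re-derivation; everything else amounts to bookkeeping.

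Combining the two computations, the desired equality $\omega_\pi = \det(\phi)$ reduces to the single identity ${}_F\mu_\chi|_{F^*} = \aleph_{E/F}$. This is immediate from the definition of ${}_F\mu_\chi$ in both cases: when $E/F$ is unramified, ${}_F\mu_\chi$ is defined as the unique unramified quadratic character of $E^*$, whose restriction to $F^*$ is the unramified quadratic character $\aleph_{E/F}$; when $E/F$ is totally tamely ramified, the condition ${}_F\mu_\chi|_{F^*}=\aleph_{E/F}$ is built into the definition. The reduction from a non-minimal admissible pair to its minimal companion via ${}_F\mu_\chi = {}_F\mu_{\chi'}$ affects neither side of the identity, and the proposition follows.
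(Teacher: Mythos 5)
Your proof is correct, and since the paper simply cites \cite[Section 33]{bushnellhenniart} for this proposition rather than proving it, there is no internal argument to compare against; your reconstruction is the natural one given the way the paper sets things up (Theorem~\ref{positivedepth}(i), the determinant-of-induced-representation formula quoted in section~\ref{setup}, and the normalization $\left.{}_F\mu_{\chi}\right|_{F^*}=\aleph_{E/F}$ built into the definition of ${}_F\mu_{\chi}$). The one place you wave your hands is the admissibility of $(E/F,\chi\,{}_F\mu_{\chi})$: in the totally ramified case you address condition (ii) but not condition (i). The remedy is that when $E/F$ is ramified, condition (ii) already forces condition (i): if $\chi\,{}_F\mu_{\chi}$ factored through $N$ on all of $E^*$, its restriction to $1+\mathfrak{p}_E$ --- which equals $\left.\chi\right|_{1+\mathfrak{p}_E}$ since ${}_F\mu_{\chi}$ is trivial there --- would also factor through $N$, forcing $E/F$ to be unramified, a contradiction. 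In the unramified case you are implicitly using the fact that any unramified character of $E^*$ (for $E/F$ unramified) factors through $N_{E/F}$, so twisting by ${}_F\mu_{\chi}$ does not change whether condition (i) holds. With these two observations supplied, the argument is complete and is in substance \cite[Lemma 34.2]{bushnellhenniart}.
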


\subsection{The Weil index}\label{variousneededconstants}

Here we collect some basic results on the Weil index, which we will need for various computations.  These results are due to Ranga Rao, and one can find the proofs and definitions in \cite{rao}.  Throughout, $F$ denotes either a local field or a finite field of characteristic $\neq 2$.  Let $\eta$ be a nontrivial additive character of $F$.  For any $a \in F$, we write $a \eta$ for the character $a \eta : x \mapsto \eta(ax)$.

\begin{definition}\label{rao0}
Define $$\gamma_F(\eta) := \mathrm{Weil \ index \ of} \ x \mapsto \eta(x^2)$$ $$\gamma_F(a, \eta) := \gamma_F(a \eta) / \gamma_F(\eta) \ \ \ a \in F^*.$$
\end{definition}

\begin{lemma}\label{rao1}

$(1) \ \gamma_F(a c^2, \eta) = \gamma_F(a, \eta)$ and $\gamma_F(ab, \eta) \gamma_F(a, \eta)^{-1} \gamma_F(b, \eta)^{-1} = (a,b)$.

$(2) \ \gamma_F(-1, \eta) = \gamma_F(\eta)^{-2}$.

$(3) \ \{\gamma_F(a, \eta) \}^2 = (-1, a) = (a,a)$.
\end{lemma}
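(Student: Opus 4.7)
The three identities in Lemma \ref{rao1} are classical consequences of the Fourier analysis of Gaussians $x \mapsto \eta(ax^2)$ over $F$, together with the Weil representation of $\mathrm{SL}(2,F)$; the proof below is a sketch, with the full technical details available in \cite{rao}. The common underlying tool is the behavior of the regularized distributional integral $\int \eta(ax^2)\, dx$ (smoothed by a Schwartz--Bruhat cutoff) under linear change of variable and under Fourier inversion.

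I would begin with the first identity in (1). Substituting $x \mapsto c^{-1}x$ in the regularized integral defining $\gamma_F((ac^2)\eta)$ turns $\eta(ac^2 x^2)$ into $\eta(ay^2)$ and produces a Jacobian $|c|_F$, which is a positive real; that factor cancels in the ratio $\gamma_F((ac^2)\eta)/\gamma_F(\eta)$. For the cocycle identity in (1), I would compute the Weil index of the binary diagonal form $q(x,y) = ax^2 + by^2$ in two ways. Splitting the integral in the independent variables gives $\gamma_F(q,\eta) = \gamma_F(a\eta)\gamma_F(b\eta)$. On the other hand, $q$ is isometric over $F$ to $\langle ab, d\rangle$ with $d$ a square precisely when $q$ represents $ab$; the obstruction to this is by definition the Hilbert symbol $(a,b)$. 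Diagonalizing $q$ in the new coordinates and computing $\gamma_F(q, \eta)$ there yields $(a,b)\,\gamma_F((ab)\eta)\gamma_F(\eta)$, and dividing the two expressions by $\gamma_F(\eta)^2$ gives exactly the cocycle identity.

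For (2), I would apply Fourier inversion to the Gaussian $x \mapsto \eta(x^2)$: its distributional Fourier transform is a scalar multiple of $\gamma_F(\eta)\,\eta(-y^2/4)$, and iterating Fourier inversion recovers the original Gaussian while producing the scalar identity $\gamma_F(\eta)^2\gamma_F(-1,\eta) = 1$ after one tracks the normalization of the measure carefully. Part (3) is then a formal consequence of (1) alone: applying the cocycle identity with $b = a$ gives $\gamma_F(a^2,\eta) = (a,a)\gamma_F(a,\eta)^2$, but the first formula of (1) (applied with $c = a$ and base point $1$) identifies the left side with $\gamma_F(1,\eta) = 1$, so $\gamma_F(a,\eta)^2 = (a,a)$. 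Finally, $(a,a) = (-1,a)$ follows from the standard identity $(a,-a) = 1$ (immediate from the solution $(x,y) = (1,1)$ to $au^2 + (-a)v^2 = 0$) combined with bimultiplicativity of the Hilbert symbol.

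The main obstacle is the cocycle identity in (1): the divergent nature of the defining integral forces one either to work consistently with regularized Gaussian distributions paired against Schwartz--Bruhat test functions, or to reinterpret $\gamma_F(q,\eta)$ intrinsically as the scalar obstruction to splitting the metaplectic cover over an appropriate subgroup. In either formulation, the identification of the discrepancy with $(a,b)$ requires a nontrivial case-by-case verification depending on whether the binary form $q$ is isotropic or anisotropic. Everything else in the lemma is essentially formal Fourier calculus once that step is in place.
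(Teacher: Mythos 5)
The paper itself offers no proof of this lemma: it is quoted verbatim from Rao's appendix (\cite{rao}) and cited there. So there is no argument in the paper to compare against. Evaluating your sketch on its own merits, parts (1) first identity, (2), and (3) are sound. The derivation of (3) from the cocycle identity together with $\gamma_F(a^2,\eta)=\gamma_F(1,\eta)=1$ and $(a,-a)=1$ is exactly right, and the Fourier-inversion route to (2), while glossing over normalization, is the standard one.

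The genuine gap is in your argument for the cocycle identity in (1). The step ``diagonalizing $q$ in the new coordinates and computing $\gamma_F(q,\eta)$ there yields $(a,b)\,\gamma_F((ab)\eta)\gamma_F(\eta)$'' only makes sense when $(a,b)=1$, since only then is $\langle a,b\rangle$ actually isometric to $\langle ab,1\rangle$ and ``new coordinates'' exist. When $(a,b)=-1$ the two binary forms have the same discriminant but opposite Hasse invariant, so they are \emph{not} isometric; there is no change of variables that carries one to the other, and the factor $(a,b)$ does not appear from a Jacobian. What you would actually need at this point is the statement that two binary forms of the same discriminant and opposite Hasse invariant have Weil indices differing by $-1$, i.e.\ precisely the content of Rao's formula $h_F(Q)=\gamma(\eta\circ Q)\gamma_F(\eta)^{-n}\gamma_F(\det Q,\eta)^{-1}$ together with $h_F(\langle a,b\rangle)=(a,b)$ --- which in the paper's numbering are Definition \ref{rao2} and Lemma \ref{rao3}, and which together are logically equivalent to the cocycle identity you are trying to establish. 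So as written the argument is circular at the key step. You do flag at the end that the identification of the discrepancy with $(a,b)$ ``requires a nontrivial case-by-case verification,'' which is honest, but the intermediate sentence presents the $(a,b)$ factor as falling out of a coordinate change, and it does not. A correct proof (Weil's, reproduced in Rao's appendix) establishes the cocycle identity either from the metaplectic $2$-cocycle of $\mathrm{SL}(2,F)$, or from a direct Gauss-sum computation in the isotropic and anisotropic cases separately; the Hilbert symbol is not an artifact of diagonalization but the obstruction to stable equivalence of the forms.
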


Let $Q$ be a nondegenerate quadratic form of degree $n$ over $F$.

\begin{definition}\label{rao2}
The Hasse invariant $h_F(Q)$ is defined as follows:  $$h_F(Q) = \gamma(\eta \circ Q) \{\gamma_F(\eta) \}^{-n} \{\gamma_F(\mathrm{det} Q, \eta) \}^{-1}.$$ Here $\gamma(\eta \circ Q)$ is the Weil index of $x \mapsto \eta(Q(x))$.
\end{definition}

\begin{lemma}\label{rao3}
If $n = 2$, and $Q = a_1 x_1^2 + a_2 x_2^2, a_1, a_2 \in F^*$, then $h_F(Q) = (a_1, a_2)$.
\end{lemma}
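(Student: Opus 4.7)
The plan is a direct computation that unfolds the definition of the Hasse invariant and then applies the cocycle-type relation of Lemma \ref{rao1}(1). Nothing more sophisticated than bookkeeping should be required.

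First I would observe that for the diagonal form $Q = a_1 x_1^2 + a_2 x_2^2$ one has $\det Q = a_1 a_2$, so the factor $\gamma_F(\det Q, \eta)^{-1}$ in Definition \ref{rao2} is simply $\gamma_F(a_1 a_2, \eta)^{-1}$. Next, since $Q$ is an orthogonal sum of the one-variable forms $a_1 x_1^2$ and $a_2 x_2^2$, the additive character $\eta \circ Q$ splits as a product of characters in independent variables, and the Weil index of a product factors as the product of the Weil indices. Thus
$$\gamma(\eta \circ Q) \;=\; \gamma_F(a_1 \eta)\,\gamma_F(a_2 \eta).$$

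Then I would rewrite each $\gamma_F(a_i \eta)$ using Definition \ref{rao0} as $\gamma_F(a_i, \eta)\,\gamma_F(\eta)$, giving
$$\gamma(\eta \circ Q) \;=\; \gamma_F(a_1, \eta)\,\gamma_F(a_2, \eta)\,\gamma_F(\eta)^2.$$
Substituting into Definition \ref{rao2} with $n = 2$, the two powers of $\gamma_F(\eta)$ cancel, and one obtains
$$h_F(Q) \;=\; \gamma_F(a_1, \eta)\,\gamma_F(a_2, \eta)\,\gamma_F(a_1 a_2, \eta)^{-1}.$$

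Finally, Lemma \ref{rao1}(1) gives exactly $\gamma_F(a_1 a_2, \eta)\,\gamma_F(a_1, \eta)^{-1}\,\gamma_F(a_2, \eta)^{-1} = (a_1, a_2)$, so rearranging and using that the Hilbert symbol takes values in $\{\pm 1\}$ yields $h_F(Q) = (a_1, a_2)$, as desired. The only step that is not a one-line invocation of Lemma \ref{rao1} is the multiplicativity of the Weil index on the orthogonal decomposition $Q = a_1 x_1^2 \perp a_2 x_2^2$; this is the main (mild) obstacle, but it is a standard property of the Weil index for a direct sum of quadratic forms and is recorded in \cite{rao}.
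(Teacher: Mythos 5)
Your computation is correct, and it is essentially the only sensible way to derive this: unwind Definition \ref{rao2}, use that $\det Q = a_1 a_2$ for the diagonal form, factor the Weil index over the orthogonal direct sum $Q = a_1 x_1^2 \perp a_2 x_2^2$ so that $\gamma(\eta \circ Q) = \gamma_F(a_1\eta)\gamma_F(a_2\eta)$, rewrite each factor via Definition \ref{rao0}, cancel the two powers of $\gamma_F(\eta)$ against $\gamma_F(\eta)^{-n}$ with $n=2$, and finally invoke Lemma \ref{rao1}(1) together with the fact that the Hilbert symbol is its own inverse. Note, however, that the paper itself gives no proof of this lemma: it is stated as one of several facts imported wholesale from Ranga Rao's work, with the remark that ``These results are due to Ranga Rao, and one can find the proofs and definitions in \cite{rao}.'' So there is no in-paper argument to compare against; your write-up is a correct reconstruction of the standard proof. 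The one input you correctly flag as not being a one-line consequence of what is recorded in the paper is the multiplicativity of the Weil index on orthogonal direct sums — that is indeed proved in \cite{rao} and is the only external ingredient required.
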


Let $\zeta \in \mathfrak{o}_F^*$ be a non-square, and let $\psi$ be an additive character of $F$.  Let $\ell(\psi)$ denote the level of $\psi$.  We will need the following two results when we match up our conjectural character formula with a supercuspidal character of $\mathrm{PGL}(2,F)$.

\begin{lemma}\label{sponge}
$\gamma_F(\zeta, \psi) = (-1)^{\ell(\psi)}$.
\end{lemma}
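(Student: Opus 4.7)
My plan is to prove $\gamma_F(\zeta,\psi)=(-1)^{\ell(\psi)}$ by (i) reducing to a value in $\{\pm 1\}$, (ii) establishing the correct covariance of both sides under rescaling $\psi$, and (iii) verifying the identity at one well-chosen character. For (i), apply lemma \ref{rao1}(3) with $a=\zeta$ to obtain $\gamma_F(\zeta,\psi)^2=(-1,\zeta)$. Since the residual characteristic of $F$ is odd and $\zeta\in\mathfrak{o}_F^*$ is a non-square, $E:=F(\sqrt{\zeta})/F$ is the unramified quadratic extension, whose norm subgroup in $F^*$ consists of the elements of even valuation. Hence $-1\in N_{E/F}(E^*)$, so $(-1,\zeta)=1$ and $\gamma_F(\zeta,\psi)\in\{\pm 1\}$.

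For (ii), given $b\in F^*$ and $\psi_b(x):=\psi(bx)$, one checks directly that $\ell(\psi_b)=\ell(\psi)-v_F(b)$. The cocycle identity in lemma \ref{rao1}(2), together with the tautology $\gamma_F(\zeta,\psi_b)=\gamma_F(\zeta b,\psi)/\gamma_F(b,\psi)$, yields
\[
\gamma_F(\zeta,\psi_b)\;=\;(\zeta,b)\,\gamma_F(\zeta,\psi),
\]
and $(\zeta,b)=(-1)^{v_F(b)}$, again because $E/F$ is unramified. Combining these two formulas, the product $\gamma_F(\zeta,\psi)\,(-1)^{\ell(\psi)}\in\{\pm 1\}$ is invariant under $\psi\mapsto\psi_b$, so it suffices to verify the identity at one convenient $\psi$.

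For (iii), take $\psi$ of even level $\ell(\psi)=2k$. The lattice $L:=\mathfrak{p}_F^k$ is self-dual for the pairing $(x,y)\mapsto\psi(2xy)$ (using $2\in\mathfrak{o}_F^*$), and $x^2\in\mathfrak{p}_F^{2k}\subset\ker\psi$ for every $x\in L$. Weil's explicit formula for the index on a self-dual lattice therefore collapses to a single term and gives $\gamma_F(\psi)=1$. The identical argument applied to $\zeta\psi$, which is still of level $2k$ because $\zeta$ is a unit, yields $\gamma_F(\zeta\psi)=1$, whence $\gamma_F(\zeta,\psi)=\gamma_F(\zeta\psi)/\gamma_F(\psi)=1=(-1)^{2k}=(-1)^{\ell(\psi)}$. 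Step (ii) then propagates the identity to every additive character of $F$.

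The whole argument is built from Rao's identities in lemma \ref{rao1} and the standard norm-subgroup computation in the unramified quadratic extension, so no single step is deep. The most delicate point is calibrating the normalization in Definition \ref{rao0} with Weil's explicit self-dual-lattice formula in step (iii); once the conventions are matched, the base case is essentially a tautology, since $Q(x)=x^2$ lands in $\ker\psi$ on the self-dual lattice and the Weil index is therefore equal to $1$ on the nose.
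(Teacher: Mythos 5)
Your argument is correct, but it takes a genuinely different route from the paper. The paper's proof is a direct computation: it invokes Rao's Proposition~A.11 to express the ratio $\gamma_F(\zeta\psi)/\gamma_F(\psi)$ in terms of Weil indices over the residue field, and then Proposition~A.9 to identify the residual factor with the Legendre symbol $\bigl(\tfrac{\overline{\zeta}}{\overline{F}}\bigr)=-1$, raised to the power $\ell(\psi)$. You instead avoid the residue-field reduction entirely: step (i) pins down the value to a sign via $\gamma_F(\zeta,\psi)^2=(-1,\zeta)=1$; step (ii) uses the cocycle relation to show $\gamma_F(\zeta,\psi)(-1)^{\ell(\psi)}$ is invariant under $\psi\mapsto\psi_b$, since both $\gamma_F(\zeta,\psi)$ and $\ell(\psi)$ pick up exactly a factor of $(\zeta,b)=(-1)^{v_F(b)}$; and step (iii) settles the constant by exhibiting a self-dual lattice $\mathfrak{p}_F^{k}$ for $Q(x)=x^2$ (possible precisely because $p$ is odd), on which $\psi\circ Q$ is trivial, so the lattice formula for the Weil index degenerates to $1$. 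Your approach is more self-contained and structural (it exhibits exactly why the answer depends only on $\ell(\psi)\bmod 2$), while the paper's is shorter once Rao's A.9 and A.11 are taken as black boxes. One small correction: the identity $\gamma_F(ab,\eta)\gamma_F(a,\eta)^{-1}\gamma_F(b,\eta)^{-1}=(a,b)$ that you call the ``cocycle identity'' is part (1), not part (2), of lemma \ref{rao1}; part (2) is $\gamma_F(-1,\eta)=\gamma_F(\eta)^{-2}$. You should also point to the precise statement of the lattice formula in \cite{rao} (it is in the appendix) rather than appealing to ``Weil's explicit formula'' without a reference, since calibrating normalizations there is exactly the delicate point you flag.
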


\proof
Let $a \in F^*$ be arbitrary.  By definition \ref{rao0}, $\gamma_F(a, \psi) = \frac{\gamma_F(a \psi)}{\gamma_F(\psi)}$.  Proposition A.11 of \cite{rao} implies that
$$\frac{\gamma_F(a \psi)}{\gamma_F(\psi)} = \frac{
\gamma_{\overline{F}}( \overline{a \psi})^{\ell(a \psi)} } {
\gamma_{\overline{F}}( \overline{\psi})^{\ell(\psi)} }$$ where
$\ell(\psi)$ is the level of $\psi$ and where $\overline{F}$ is the residue field of $F$.

Now, $$\gamma_{\overline{F}}( \overline{a \psi})^{\ell(a \psi)} =
\gamma_{\overline{F}}(\overline{a}, \overline{\psi})^{\ell(a \psi)}
\gamma_{\overline{F}}(\overline{\psi})^{\ell(a \psi)}$$ by definition \ref{rao0}.  We therefore have

$$\frac{ \gamma_{\overline{F}}( \overline{a \psi})^{\ell(a \psi)} }
{ \gamma_{\overline{F}}( \overline{\psi})^{\ell(\psi)} } = \frac{
\gamma_{\overline{F}}(\overline{a}, \overline{\psi})^{\ell(a \psi)}
\gamma_{\overline{F}}(\overline{\psi})^{\ell(a \psi)}  }{
\gamma_{\overline{F}}(\overline{\psi})^{\ell(\psi)}  }.$$

Write $a = p^m u$.  Then $\ell(a \psi) = \ell(\psi) - m$.  Moreover, $$\gamma_{\overline{F}}(\overline{a}, \overline{\psi}) = \left(\frac{\overline{a}}{\overline{F}}\right)$$   by proposition A.9 of \cite{rao}, where $\left(\frac{\overline{a}}{\overline{F}}\right)$ denotes the Legendre symbol.

In our setting, $a = \zeta$ is a non-square unit, so $m = 0$.  Therefore, altogether, we have $$\gamma_F(\zeta, \psi) = \left(\frac{\overline{\zeta}}{\overline{F}}\right)^{\ell(\zeta
\psi)} = (-1)^{\ell(\psi)}. \qedhere $$

Now let $\lambda_{E/F}(\psi)$ be the Langlands constant from \cite[pages 216, 217, 241]{bushnellhenniart}, where $E/F$ is any tamely ramified quadratic extension.  Write $E = F(\sqrt{\zeta})$, for some $\zeta$.
\begin{lemma}\label{psilevel}
$\lambda_{E/F}(\psi) = \gamma_F(\zeta, \psi) (-1, \zeta)$.
\end{lemma}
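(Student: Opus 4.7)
The plan is to identify $\lambda_{E/F}(\psi)$ with the Weil index of the norm form of $E/F$ and then compute that Weil index using the Hasse invariant formalism of Definitions \ref{rao0}, \ref{rao2} together with the multiplicativity relations of Lemma \ref{rao1}. I will first view $E = F(\sqrt{\zeta})$ as a two-dimensional $F$-vector space with basis $\{1, \sqrt{\zeta}\}$, so that the norm form becomes the diagonal nondegenerate quadratic form $Q(x,y) = x^2 - \zeta y^2$ with $\det Q = -\zeta$.

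The main computation will then proceed in three short steps. First, by Lemma \ref{rao3}, the Hasse invariant is $h_F(Q) = (1, -\zeta) = 1$, since the Hilbert symbol is trivial whenever one argument equals $1$. Substituting this into the defining relation of Definition \ref{rao2} with $n=2$ yields
$$\gamma(\psi \circ Q) \;=\; \gamma_F(\psi)^{2}\, \gamma_F(-\zeta,\psi).$$
Second, by the cocycle identity of Lemma \ref{rao1}(1) applied to $-\zeta = (-1)\cdot \zeta$,
$$\gamma_F(-\zeta,\psi) \;=\; \gamma_F(-1,\psi)\,\gamma_F(\zeta,\psi)\,(-1,\zeta),$$
and Lemma \ref{rao1}(2) gives $\gamma_F(-1,\psi) = \gamma_F(\psi)^{-2}$. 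Third, combining these,
$$\gamma(\psi \circ Q) \;=\; \gamma_F(\psi)^{2}\cdot \gamma_F(\psi)^{-2}\,\gamma_F(\zeta,\psi)\,(-1,\zeta) \;=\; \gamma_F(\zeta,\psi)\,(-1,\zeta).$$

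The remaining (and, in my view, only real) obstacle is the identification
$$\lambda_{E/F}(\psi) \;=\; \gamma(\psi \circ N_{E/F}) \;=\; \gamma(\psi \circ Q)$$
for a tame quadratic extension. This is the content of the classical formula relating Langlands' constant for a quadratic induction to the Weil index of the norm form, and it has to be read off from the definition of $\lambda_{E/F}(\psi)$ in \cite[\S 34.3]{bushnellhenniart} (equivalently from the epsilon factor of the class-field-theory character $\aleph_{E/F}$). Once that normalization is pinned down, the three-line computation above delivers the claimed identity $\lambda_{E/F}(\psi) = \gamma_F(\zeta,\psi)(-1,\zeta)$ directly, and nothing further is required.
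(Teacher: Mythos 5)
Your proposal is correct and follows essentially the same route as the paper: both identify $\lambda_{E/F}(\psi)$ with the Weil index of the norm form $Q = x^2 - \zeta y^2$ (citing \cite[pp.\ 240--241]{bushnellhenniart}), then compute via Definition \ref{rao2}, Lemma \ref{rao3}, and the multiplicativity relations of Lemma \ref{rao1}. Your algebraic rearrangement in the final step (expanding $\gamma_F(-\zeta,\psi)$ directly via the cocycle identity and cancelling $\gamma_F(\psi)^{\pm 2}$) is marginally cleaner than the paper's, which detours through $(-1,-1)=1$, but the content is the same.
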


\proof
By \cite[page 240-241]{bushnellhenniart}, $\lambda_{E/F}$ is the Weil index (cf. definition \ref{rao0}) of $$q : E^* \rightarrow \mathbb{C}^*$$ where $q(z) := \psi(N(z))$, where $N$ is the norm map $N : E^* \rightarrow F^*$.  The associated symmetric bilinear form is $$(z,w)_q := \frac{\mathrm{Tr}(z \overline{w})}{2}.$$  We wish to therefore calculate the Weil index $\gamma(\psi \circ N)$, which will be the Langlands constant $\lambda_{E/F}(\psi)$.

A basis for $E/F$ is $1, \delta$, where $E = F(\delta), \delta = \sqrt{\zeta}$.  Then, $(1,1)_q= 1$, $(1, \delta)_q = (\delta, 1)_q = 0,$ and  $(\delta, \delta)_q = -\zeta$.  Thus, the matrix of the quadratic form is $$\mat{1}{0}{0}{-\zeta}.$$  Then, by definition \ref{rao2}, $$\gamma(\psi \circ N) = h_F(N) \gamma_F(\psi)^n \gamma_F(\mathrm{det}(N),\psi).$$

In our setting, $n=2$, and by lemmas \ref{rao1} and \ref{rao3}, and since $(-1,-1) = 1$, we have $$\gamma(\psi \circ N) = (1, -\zeta) \gamma_F(\psi)^2 \gamma_F(-\zeta, \psi) = \gamma_F(-1, \psi)^{-1} \gamma_F(-\zeta, \psi) =$$ $$ \gamma_F(-1, \psi) \gamma_F(-1, \psi)^{-2} \gamma_F(-\zeta, \psi) = \gamma_F(-1, \psi) (-1, -1) \gamma_F(-\zeta, \psi) =$$ $$ \gamma_F(\zeta, \psi) (-1, -\zeta) = \gamma_F(\zeta, \psi) (-1, \zeta). \qedhere $$

\subsection{Setup}\label{setup}

In this section we will prove theorem \ref{gl2theorem1} and theorem \ref{gl2theorem2} for the positive depth supercuspidal representations of $\mathrm{PGL}(2,F)$.

Let $F$ denote a non-Archimedean local field of characteristic zero with residual characteristic coprime to $2$.  Let $E/F$ be a quadratic extension.  Write $E = F(\sqrt{\zeta})$ for some $\zeta \in F^*$ and let $\delta := \sqrt{\zeta}$.  $E^*$ embeds as an elliptic torus in $\mathrm{GL}(2,F)$ via the map $$E^* \hookrightarrow \mathrm{GL}(2,F)$$ $$a + b \delta \mapsto \mat{a}{b}{b \zeta}{a}$$ and therefore $E^*/F^*$ embeds in $\mathrm{PGL}(2,F)$ as an elliptic torus as well.

We now explain why double covers of tori play a role.  We start by considering the group $\mathrm{PGL}(2,F)$.  First recall that the representations of $\mathrm{PGL}(2,F)$ are precisely the representations of $\mathrm{GL}(2,F)$ with trivial central character.  Let $\phi$ be a supercuspidal Weil parameter for $\mathrm{PGL}(2,F)$.  Then $\phi = Ind_{W_E}^{W_F}(\chi)$, for some admissible pair $(E/F, \chi)$, by \cite[Theorem 34.1]{bushnellhenniart}.  It is a fact (see \cite[Proposition 29.2]{bushnellhenniart}) that $\mathrm{det}(Ind_{W_E}^{W_F}(\chi)) = \chi|_{F^*} \otimes \delta_{E/F}$, where $\delta_{E/F} = \mathrm{det}(Ind_{W_E}^{W_F}(1))$.  In the case that $E/F$ is quadratic, $\delta_{E/F} = \aleph_{E/F}$.  Thus, by proposition \ref{centralcharacter}, $\chi|_{F^*} = \aleph_{E/F}$. Therefore, the supercuspidal representations of $\mathrm{PGL}(2,F)$ are parameterized by the admissible pairs $(E/F, \chi)$ where $\chi|_{F^*} = \aleph_{E/F}$.  Such a $\chi$ is not a character of the elliptic torus $E^* / F^*$ in $\mathrm{PGL}(2,F)$.  Rather, it is a genuine character of a double cover $E^* / N(E^*)$ of $E^* / F^*$ arising from the canonical exact sequence (note that $ker(\aleph_{E/F}) = N(E^*)$): $$1 \longrightarrow F^* / N(E^*) \longrightarrow E^* / N(E^*) \rightarrow E^* / F^* \longrightarrow 1.$$ Since $F^* / N(E^*) \cong \mathbb{Z}/ 2 \mathbb{Z}$ by Local Class Field Theory, we have that $E^* / N(E^*)$ is a double cover of the torus $E^* / F^*$.  Then the character $\chi$ of $E^*$ naturally factors to a genuine character $\tilde\chi$ of $E^* / N(E^*)$, given by $\tilde\chi([w]) := \chi(w) \ \forall [w] \in E^* / N(E^*)$.  Therefore, the supercuspidal representations of $\mathrm{PGL}(2,F)$ are naturally parameterized by genuine characters of the double cover $E^* / N(E^*)$.  This double cover $E^* / N(E^*)$ is none other than an analogue of the $\rho$-cover that appears in the theory over the reals (see definition \ref{rhocoverdefinition}).  We explain this now.

Let $T(F)$ be an elliptic torus in $\mathrm{PGL}(2,F)$.  Then $T(F) \cong E^* / F^*$ for some quadratic extension $E/F$.  A choice of isomorphism $T(F) \cong E^* / F^*$ defines a standard positive root $\alpha(z) = z / \overline{z}$, for $z \in E^* / F^*$. Let $\rho$ be half the positive root.  Fix a character $\tilde\tau$ of $E^*$ whose restriction to $F^*$ is $\aleph_{E/F}$, and set $\tau := \tilde\tau | \ |_E$.  Recall the denominator that was defined in theorem \ref{gl2theorem1}.  Although $\tau \circ \rho$ is not naturally a function on $E^* / F^*$ since in particular $\rho$ is not naturally a function on $E^* / F^*$, it is by definition a function on the $\tau \circ \rho$-cover of $E^* / F^*$.   Recall definition \ref{tauofrhocover}.  Then in our case, $T(F)_{\tau \circ \rho} = \{(z,\lambda) \in E^* / F^* \times \mathbb{C}^* : \tau(2 \rho(z)) = \lambda^2 \}$.

\begin{lemma}\label{doublecovertoriisomorphism}
$E^* / N(E^*) \cong T(F)_{\tau \circ \rho}$.
\end{lemma}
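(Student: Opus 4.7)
The plan is to write down an explicit map $\Phi : E^*/N(E^*) \to T(F)_{\tau\circ\rho}$ and verify directly that it is a group isomorphism. First I would unwind the positive root: the chosen embedding $T(F) \cong E^*/F^*$ makes $\alpha(z) = z/\overline{z}$, so $2\rho(z) = z/\overline{z}$, and the defining condition of $T(F)_{\tau\circ\rho}$ reads $\tau(w/\overline{w}) = \lambda^2$. I would then propose
\[
\Phi([w]) := \bigl([w]_{E^*/F^*},\; \tilde\tau(w)\bigr),
\]
where on the left $[w]$ denotes a class modulo $N(E^*)$.

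The key identity to verify is $\tau(w/\overline{w}) = \tilde\tau(w)^2$, which guarantees that the image actually lies in $T(F)_{\tau\circ\rho}$. Since Galois conjugation preserves the $E$-adic absolute value, $|w/\overline{w}|_E = 1$, so $\tau(w/\overline{w}) = \tilde\tau(w)\tilde\tau(\overline{w})^{-1}$, and the identity reduces to $\tilde\tau(N(w)) = 1$. This holds because $N(w) \in F^*$, $\tilde\tau|_{F^*} = \aleph_{E/F}$, and $\aleph_{E/F}$ is trivial on $N(E^*)$ by local class field theory. The same observation gives well-definedness of $\Phi$ on $E^*/N(E^*)$: replacing $w$ by $w \cdot N(u)$ does not change the first coordinate (as $N(u) \in F^*$) and multiplies the second by $\tilde\tau(N(u)) = \aleph_{E/F}(N(u)) = 1$. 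Multiplicativity of $\Phi$ is immediate from $\tilde\tau$ being a character.

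For bijectivity, injectivity follows because $\Phi([w_1]) = \Phi([w_2])$ forces $w_2 = w_1 f$ for some $f \in F^*$ with $\tilde\tau(f) = \aleph_{E/F}(f) = 1$, hence $f \in N(E^*)$. Surjectivity is direct: given $(z,\lambda) \in T(F)_{\tau\circ\rho}$, pick any $w \in E^*$ lifting $z$; the key identity gives $\tilde\tau(w)^2 = \lambda^2$, so $\lambda = \pm\tilde\tau(w)$, and the two signs are interchanged by multiplying $w$ by any $f \in F^*$ with $\aleph_{E/F}(f) = -1$, which exists because $\aleph_{E/F}$ is nontrivial. There is no serious obstacle: conceptually, both sides are the unique nonsplit double cover of $E^*/F^*$ determined by the quadratic character $\aleph_{E/F}$ of $F^*/N(E^*)$, and $\Phi$ simply makes this identification explicit via the chosen lift $\tilde\tau$ of $\aleph_{E/F}$ to a character of $E^*$.
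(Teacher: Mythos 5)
Your map $\Phi$ coincides with the paper's map $\kappa$, because the factor $|2\rho(z)|^{1/2}$ that appears in the paper is identically $1$ (as you note, $|w/\overline{w}|_E = 1$ since Galois conjugation preserves the $E$-adic absolute value), and the verification of well-definedness, the key identity $\tau(w/\overline{w}) = \tilde\tau(w)^2$, injectivity, and surjectivity all run along the same lines. This is correct and essentially the paper's own argument, with the minor simplification of dropping the trivial absolute-value factor up front.
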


\begin{proof}
Define the map $$E^* / N(E^*) \xrightarrow{\kappa} T(F)_{\tau \circ \rho}$$ $$[w] \mapsto (z, \tilde\tau(w) |2 \rho(z)|^{1/2})$$ where we are taking the positive square root of the absolute value, and where $z$ is the image of $[w] \in E^* / N(E^*)$ in $E^* / F^*$. To show injectivity, suppose $\kappa([w]) = (1, 1)$, where $w \in E^*$.  Since $z = 1$, we get $w \in F^*$.  But since $\tilde\tau(w) |2 \rho(z)|^{1/2} = 1$, we get that $w \in N(E^*)$ since $\tilde\tau|_{F^*} = \aleph_{E/F}$ and since $|2 \rho(z)| = 1$ since $z = 1$.  To show surjectivity, suppose $(z, \lambda) \in T(F)_{\tau \circ \rho}$, where $z$ is the image of $w \in E^*$ in $E^* / F^*$.  Then, by definition of $T(F)_{\tau \circ \rho}$, we get that $\tau(2 \rho(z)) = \lambda^2$.  This means that $\tilde\tau(w / \overline{w}) |2 \rho(z)| = \lambda^2$.  But $\tilde\tau$ is trivial on the norms, so we have that $\tilde\tau(w / \overline{w}) = \tilde\tau(w^2 / N(w)) = \tilde\tau(w)^2$.  Therefore, $\lambda = \pm \tilde\tau(w) |2 \rho(z)|^{1/2}$.  If $\lambda = \tilde\tau(w) |2 \rho(z)|^{1/2}$, then we get that $\kappa([w]) = (z, \lambda)$.  If $\lambda = - \tilde\tau(w) |2 \rho(z)|^{1/2}$, then let $x \in F^* \setminus N(E^*)$.  Then $\kappa([x w]) = (z, \lambda)$.  Therefore, $\kappa$ is surjective.  Since $\kappa$ is clearly a homomorphism, $\kappa$ is an isomorphism.
\end{proof}

We note that the importance of the term $|2 \rho(z)|^{1/2}$ comes from the fact that $$|D(z)|^{\frac{1}{2}} = |\Delta^0(z, \Delta^+)| |2 \rho(z)|^{1/2},$$ an observation made in section 3.  The reason why this is important is that the term $|D(z)|^{1/2}$ appears in the supercuspidal characters (see section \ref{theconstantepsilon}).

Now let's write down the character formula for a supercuspidal representation of $\mathrm{PGL}(2,F)$.  Let $\phi : W_F \rightarrow GL(2,\mathbb{C})$ be a supercuspidal parameter for $\mathrm{PGL}(2,F)$ so that $\phi = Ind_{W_E}^{W_F}(\chi)$ for some admissible pair $(E/F, \chi)$, where $E = F(\sqrt{\zeta})$ for some $\zeta$. As discussed earlier, this gives us a genuine character $\tilde\chi$ of $E^* / N(E^*)$.  Now recall from theorem \ref{gl2theorem1} the proposed character formula for $F(\tilde\chi)$.  We naturally constructed a genuine character $\tilde\chi$ of $E^* / N(E^*)$.  However, the functions in $F(\tilde\chi)$ have domain $T(F)_{\tau \circ \rho}$.  Recall that $T(F)_{\tau \circ \rho} \cong E^* / N(E^*)$ by lemma \ref{doublecovertoriisomorphism}, so we can pull the function $(\tau \circ \rho)(w)$ and the Weyl group action in $F(\tilde\chi)$ back to $E^* / N(E^*)$ via this isomorphism, and leave our constructed $\tilde\chi$ as living on $E^* / N(E^*)$.  That is, we consider $$F(\tilde\chi)(z) = \epsilon(\tilde\chi, \Delta^+, \tau) \frac{\displaystyle\sum_{s \in W} \epsilon(s)\tilde\chi({}^s [w])}{\tau(\Delta^0(z,\Delta^+)) (\tau \circ \rho)(\kappa([w]))}, \ \ z \in T(F)^{reg}$$ where $[w] \in E^* / N(E^*)$ such that $\Pi(\kappa([w])) = z$.  We will define $\epsilon(\tilde\chi, \Delta^+, \tau)$ in section \ref{theconstantepsilon}.  Unwinding the definitions, we see that $$(\tau \circ \rho)(\kappa([w])) = \tilde\tau(w) |2 \rho(z)|^{1/2} \ \forall [w] \in E^* / N(E^*),$$ where $z$ is the image in $E^* / F^*$ of $[w]$.

We also need to define the Weyl group action.  The Weyl group action on the $\tau \circ \rho$-cover is obtained as follows. If $(z,\lambda)$ is an element of $T(F)_{\tau \circ \rho}$, then analogously to the real case (recall (\ref{eq:weylgroupreal})), define $s(z,\lambda) = (sz,\lambda \tau((s^{-1} \rho - \rho)(z)))$ for $s \in W = W(G(F),T(F)) = \mathrm{Aut}(E/F)$, the relative Weyl group.  Note that this is well-defined.  Simplifying this expression, we get $s(z,\lambda) = (\overline{z}, \lambda \tau(\overline{w}/w))$ when $s \in W$ is nontrivial, and where $\overline{z}$ means the image of $\overline{w}$ in $E^* / F^*$.  Then, since our character formula lives on $E^* / N(E^*)$, we must pull back this action from $T(F)_{\tau \circ \rho}$ to $E^* / N(E^*)$ via $\kappa$.  Doing this, we see that we get $s [w] = \kappa^{-1}(s \kappa([w])) = \kappa^{-1}(s (z, \tilde\tau(w) |2 \rho(z)|^{1/2})) = \kappa^{-1}(\overline{z}, \tilde\tau(w) |2 \rho(z)|^{1/2} \tau(\overline{w}/w)) = \kappa^{-1}(\overline{z}, \tilde\tau(\overline{w}) |2 \rho(\overline{z})|^{1/2}) = \overline{z} \forall [w] \in E^* / N(E^*)$ when $s \in W = \mathrm{Aut}(E/F)$ is nontrivial, since $|2 \rho(z)| = |w / \overline{w}| = 1 \ \forall w \in E^*$, where $z$ is the image of $w$ in $E^* / F^*$.

Then, pulling the denominator of the proposed character formula and the Weyl group action back to $E^* / N(E^*)$ via $\kappa$, we get $$F(\tilde\chi)(z) = \epsilon(\tilde\chi, \Delta^+, \tau) \frac{\tilde\chi([w]) + (-1, \zeta) \tilde\chi([\overline{w}])}{\tilde\tau(1-\overline{z}/z) |\Delta^0(z, \Delta^+)| \tilde\tau(w) |2 \rho(z)|^{1/2} }$$ where $z \in E^*/F^*$ and $[w] \in E^* / N(E^*)$ is some element that maps to $z$ under the map $E^* / N(E^*) \rightarrow E^* / F^*$.  We can also pull this character formula all the way back to $E^*$, and we get $$F(\tilde\chi)(z) = \epsilon(\tilde\chi, \Delta^+, \tau) \frac{\chi(w) + (-1, \zeta) \chi(\overline{w})}{\tilde\tau(1- \overline{w}/w) |\Delta^0(w, \Delta^+)| \tilde\tau(w) |2 \rho(w)|^{1/2}}$$ where $z \in E^*/F^*$ and $w \in E^*$ is some element that maps to $z$ under the map $E^* \rightarrow E^* / F^*$. We will see that this proposed character formula for $\mathrm{PGL}(2,F)$ is independent of the choice of $\tau$.

Note that our formula simplifies: $$F(\tilde\chi)(z) = \epsilon(\tilde\chi, \Delta^+, \tau) \frac{\chi(w) + (-1, \zeta) \chi(\overline{w})}{\tilde\tau(w-\overline{w}) |D(z)|^{1/2}}, \ z \in T(F)^{reg}$$ where $w \in E^*$ is any element that maps to $z \in T(F) = E^* / F^*$ under the canonical map $E^* \rightarrow E^* / F^*$.

The reason is that if $[w] \in E^* / N(E^*)$ maps to $z \in E^* / F^*$, then $2 \rho(z) = w / \overline{w}$, since the positive root of $\mathrm{GL}(2,F)$ factors to $\mathrm{PGL}(2,F)$ and since $|D(z)|^{1/2} = |\Delta^0(z, \Delta^+)| |2 \rho(z)|^{1/2}$ from section 3.

We also note that if we had made the other choice of $\Delta^+$, the denominator in our character formula would include the term $\tilde\tau(\overline{w} - w)$ instead of $\tilde\tau(w - \overline{w})$.  However, because our definition of $\epsilon(\tilde\chi, \Delta^+, \tau)$ includes the term $\epsilon(\Delta^+)$ (see section \ref{theconstantepsilon}), our overall character formula $F(\tilde\chi)$ remains the same regardless of the choice of positive root.

Summing up, noting that $T(F)_{\tau \circ \rho} \cong E^* / N(E^*)$, then we have given a method of assigning a conjectural character formula for a supercuspidal representation of $\mathrm{PGL}(2,F)$, to a supercuspidal Weil parameter of $\mathrm{PGL}(2,F)$, given by

\begin{eqnarray}
\{ supercuspidal \ \phi : W_F \rightarrow SL(2,\mathbb{C}) \} & \mapsto & \tilde\chi \in \widehat{T(F)}_{\tau \circ \rho} \mapsto F(\tilde\chi). \nonumber
\end{eqnarray}

\subsection{The constant $\epsilon(\tilde\chi, \Delta^+, \tau)$}\label{theconstantepsilon}

We now turn to the question of defining the constant $\epsilon(\tilde\chi, \Delta^+, \tau)$.  We recall the main theorem of \cite{debacker} which describes the distribution characters of positive depth supercuspidal representations of $\mathrm{GL}(2,F)$.  We only write down the part of the character formula that we need.

\begin{theorem}{\label{DeBacker1}}{\cite[Theorem 5.3.2]{debacker}}  Let $(E/F, \phi)$ be an admissible pair where $E/F$ has degree $2$ and $\phi$ has positive level, and write $G' = E^*$  Let $\pi = \pi_{\phi}$ be the associated supercuspidal representation of $\mathrm{GL}(2,F)$ given by theorem \ref{positivedepth}.  Then

\begin{equation*}
\frac{\theta_{\pi}(w)}{\mathrm{deg}(\pi)} = \left\{
\begin{array}{rl}
C \lambda(\sigma) \displaystyle\sum_{\overline{w} \in W} \phi({}^w t) & \text{if } n(w) = 0 \ and \ w = {}^g t \ with \ g \in G \\
&  and \ t \in G'\\
C \displaystyle\sum_{\overline{w} \in W} \phi({}^w t) \gamma(\alpha(\phi), {}^w Y) & \text{if } 0 < n(w) \leq r/2 \ and \ w = {}^g t \\
&  where \ t = c(1 + Y) \ with \  c \in Z, g \in G, \\
& and \ Y \in \mathfrak{g}_{n(w)}' \setminus (\mathfrak{z}_{n(w)} + \mathfrak{g}_{n(w)^+}')
\end{array} \right.
\end{equation*}

\end{theorem}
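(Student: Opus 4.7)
The plan is to establish this character formula by realizing $\pi=\pi_\phi$ as a compact induction and then applying the Frobenius character formula. By the Howe/Bushnell–Henniart construction, for an admissible pair $(E/F,\phi)$ of positive level $n$, there is an open, compact-mod-center subgroup $J\subset\mathrm{GL}(2,F)$ containing $E^{*}$ (of the form $E^{*}\cdot U$ for $U$ a congruence subgroup of a suitable maximal compact) and an irreducible representation $\Lambda$ of $J$ such that $\pi_\phi\cong\mathrm{c\text{-}Ind}_{J}^{G}\Lambda$. The representation $\Lambda$ is built from $\phi$ on the torus part $E^{*}$ together with a Heisenberg/Weil-type representation on the unipotent part, with a central character determined by the defining element $\alpha(\phi)\in\mathfrak{p}_E^{-n}$ that satisfies $\phi(1+x)=\psi_E(\alpha(\phi)x)$ for $x\in\mathfrak{p}_E^{\lceil n/2\rceil}$.

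Next, I would invoke the Harish-Chandra/van Dijk formula for characters of compactly induced representations: for $w$ in the regular set one formally has
$$\theta_\pi(w)=\sum_{gJ\in G/J,\; g^{-1}wg\in J}\theta_\Lambda(g^{-1}wg),$$
with convergence guaranteed by the compact-mod-center support of matrix coefficients. Using ellipticity of $E^{*}$ and the Cartan/Iwasawa decomposition, the cosets $gJ$ with $g^{-1}wg\in J$ are parameterized, up to centralizer, by the relative Weyl group $W=\mathrm{Aut}(E/F)$ acting on $t\in E^{*}$ when $w={}^gt$, producing the outer sum $\sum_{\overline{w}\in W}$.

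The main work is then to compute $\theta_\Lambda$ on elements of the form $c(1+Y)$ with $c\in Z$ and $Y\in\mathfrak{g}_{n(w)}'$. One splits into two regimes according to $n(w)$: at depth zero one reduces to the character of a cuspidal Deligne–Lusztig/cuspidal inflated representation of $\mathrm{GL}(2,k_F)$ restricted to a maximal torus, which by Green/Springer is a Weyl sum of $\phi$-values up to the sign $\lambda(\sigma)$. For $0<n(w)\leq r/2$, the restriction $\Lambda|_{1+\mathfrak{p}^{n(w)}}$ is a Heisenberg representation whose character is well-known to be expressible as a Gauss sum on the symplectic quotient $\mathfrak{g}_{n(w)}'/(\mathfrak{z}_{n(w)}+\mathfrak{g}_{n(w)^+}')$; evaluated at $1+Y$ this Gauss sum is exactly $\gamma(\alpha(\phi),{}^wY)$ as defined in Section \ref{variousneededconstants}. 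The hypothesis $Y\notin\mathfrak{z}_{n(w)}+\mathfrak{g}_{n(w)^+}'$ ensures non-degeneracy of the quadratic form so the Weil index is well-defined and computable via the stationary-phase evaluation.

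Finally, the constant $C$ absorbs the formal degree of $\pi$, the Haar measure on $J/Z$, and a fixed normalization of the invariant measure on $G/G'$; comparing with Bushnell–Henniart's formal degree formula for $\pi_\phi$ pins it down. The sign $\lambda(\sigma)$ at depth zero records the $\pm 1$ coming from a Steinberg-type (Deligne–Lusztig) sign in the Green character formula. I expect the main technical obstacle to be the Heisenberg/Weil index computation producing $\gamma(\alpha(\phi),{}^wY)$: it requires a careful choice of polarization of the symplectic form on the associated Moy–Prasad quotient, and one must verify that the Gauss-sum evaluation is compatible with the normalizations in Definition \ref{rao0} so that the Weil index appearing agrees with the one in Section \ref{variousneededconstants}.
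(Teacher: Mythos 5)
This theorem is stated in the paper as a citation of \cite[Theorem~5.3.2]{debacker} (the character formula itself is due to Kutzko, as refined by DeBacker); the paper gives no proof of it, so there is no internal argument to compare against. That said, your sketch does correctly identify the broad architecture one finds in those sources: realize $\pi_\phi$ as $\mathrm{c\text{-}Ind}_J^G\Lambda$ with $J = E^*U$ a compact-mod-center subgroup, apply the Frobenius/Harish-Chandra formula, observe that the surviving cosets are indexed by $W=\mathrm{Aut}(E/F)$, and then compute $\theta_\Lambda$ on $c(1+Y)$ via a Gauss-sum/Weil-index evaluation of the Heisenberg piece of $\Lambda$, with the constant $C$ absorbing the formal degree and measure normalizations.

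There is, however, a real confusion in the middle of your proposal. You write that ``at depth zero one reduces to the character of a cuspidal Deligne--Lusztig/cuspidal inflated representation of $\mathrm{GL}(2,k_F)$ restricted to a maximal torus, which by Green/Springer is a Weyl sum of $\phi$-values up to the sign $\lambda(\sigma)$.'' But the hypothesis of this theorem is that $\phi$ has \emph{positive} level, so $\pi_\phi$ is a positive-depth supercuspidal and its inducing datum $\Lambda$ is never a Deligne--Lusztig representation. The case split in the formula is on $n(w)$ --- the depth of the element $w\in E^*$ being evaluated --- not on the depth of $\phi$. In the $n(w)=0$ branch, one is evaluating the character of the \emph{same} Heisenberg/Weil-type representation $\Lambda$ at elements far from the identity; the constant $\lambda(\sigma)$ there is a Weil-representation normalization (equal to $1$ in the ramified case and $(-1)^{r+1}$ in the unramified case, as the paper later records from \cite{debacker}), not a Deligne--Lusztig/Green sign. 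The Deligne--Lusztig picture you describe is the correct one only for the depth-zero supercuspidal case, which the paper handles separately in Section~\ref{depthzerochapter} via Theorem~\ref{depthzerocharacters}, not here. So if you intend to reconstruct DeBacker's proof of this theorem, the $n(w)=0$ branch should be derived from the same Heisenberg/Weil analysis as the $0<n(w)\le r/2$ branch, specialized to elements where the Gauss sum degenerates to the constant $\lambda(\sigma)$, rather than by appeal to Green's character formula.
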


We note that in \cite{debacker}, $X_\pi$ is the notation used instead of $\alpha(\phi)$ (recall the notation $\alpha(\phi)$ from section \ref{llcreview}).  We prefer to use the notation $\alpha(\phi)$.  Moreover, $C := c_{\psi}(\mathfrak{g'}) c_{\psi}^{-1}(\mathfrak{g}) |D(w)|^{-1/2} |\eta(\alpha(\phi))|^{-1/2}$ is defined in \cite[Section 5.3]{debacker}.  This notation is explained in \cite[Sections 4-5]{debacker}.  In particular, $D$ and $\eta$ denote the Weyl discriminants of $G$ and $\mathfrak{g}$, respectively.  Moreover, $n(w)$ will be defined in section \ref{decompositions}.

Let $(E/F, \chi)$ be an admissible pair.  To define $\epsilon(\tilde\chi, \Delta^+, \tau)$, we need to calculate the constant $\gamma(\alpha(\chi), Y)$ in the above theorem.  We refer the reader to \cite[Theorem 5.3.2]{debacker} and \cite[page 55]{debacker} for various notation.  We follow the same notation.

Let $G = \mathrm{GL}(2,F), \mathfrak{g} = \mathfrak{gl}(2,F), G' = E^*, \mathfrak{g}' = E$.  We have a direct sum decomposition $\mathfrak{g} = \mathfrak{g}' + \mathfrak{g}^{\bot}$ where the perpendicular is taken with respect to the trace form $\langle Z_1, Z_2 \rangle := \mathrm{Tr}(Z_1 Z_2) \ \forall Z_1, Z_2 \in \mathfrak{g}$.  Recall our fixed additive character $\psi$ of $F$.  Let $V,W \in \mathfrak{g}^{\bot}$.  Then we define $$Q_{(\alpha(\chi),Y)}(V,W) := (\mathrm{trace}([\alpha(\chi),W][V,Y])/2).$$ $Q_{(\alpha(\chi),Y)}(V,W)$ is a non-degenerate, symmetric, bilinear form on $\mathfrak{g}^{\bot}$.  Then, $\gamma(\alpha(\chi), Y)$ is by definition the Weil Index of $\psi \circ Q_{(\alpha(\chi),Y)}$ (see \cite{rao}).

Recall that $\mathfrak{g}'$ may be embedded in $M_2(F)$ by

$$\mathfrak{g}' \hookrightarrow \mathfrak{g}$$ $$\ \ \ \ \ \ \ \ \ \ \ \ \ \ \ \ \ \ \ \ \ \ \ \ a + d \delta \mapsto \mat{a}{d}{d \zeta}{a}, \ \ a,d \in F$$ where $E = F(\delta)$ with $\delta = \sqrt{\zeta}$.

\begin{lemma}\label{weilcalculation}
Let $\alpha(\chi) = \mat{a}{x}{x \zeta}{a} \in \mathfrak{g'}$ and $Y = \mat{t}{y}{y \zeta}{t} \in \mathfrak{g'}$.  Then $$\gamma(\alpha(\chi), Y) = (x, \zeta) (y, \zeta) \ \gamma_F(\zeta, \psi).$$
\end{lemma}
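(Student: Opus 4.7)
The plan is to make the form $Q_{(\alpha(\chi),Y)}$ explicit by choosing a convenient basis of $\mathfrak{g}^\bot$, diagonalize it, and then apply lemmas \ref{rao1}, \ref{rao2}, \ref{rao3} to extract the Weil index.

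First, I will identify $\mathfrak{g}^\bot$ with respect to the trace form. Writing a general element of $\mathfrak{g}$ as $\mat{p}{q}{r}{s}$ and pairing with an arbitrary $\mat{a}{d}{d\zeta}{a} \in \mathfrak{g}'$ gives $a(p+s) + d(r + \zeta q)$, so $\mathfrak{g}^\bot$ consists of matrices of the form $\mat{b}{c}{-c\zeta}{-b}$. I will take the natural basis $V_1 := \mat{1}{0}{0}{-1}$ and $V_2 := \mat{0}{1}{-\zeta}{0}$, noting that $V_1^2 = I$, $V_2^2 = -\zeta I$, and $\mathrm{Tr}(V_1 V_2) = 0$.

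Next I compute commutators: a direct calculation gives $[\alpha(\chi),V_1] = -2x\,V_2$ and $[\alpha(\chi),V_2] = -2x\zeta\,V_1$, and similarly $[V_1,Y] = 2y\,V_2$ and $[V_2,Y] = 2y\zeta\,V_1$ (so in particular all of these lie in $\mathfrak{g}^\bot$). Plugging these into $Q_{(\alpha(\chi),Y)}(V_i,V_j) = \mathrm{Tr}([\alpha(\chi),V_j][V_i,Y])/2$ and using the traces above, the off-diagonal entries vanish, while the diagonal ones become $Q(V_1,V_1) = 4xy\zeta$ and $Q(V_2,V_2) = -4xy\zeta^2$. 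In particular $\det Q \equiv -\zeta \pmod{(F^*)^2}$ and, mod squares, $Q$ is equivalent to $xy\zeta\, X_1^2 - xy\, X_2^2$.

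The last step is bookkeeping with Weil indices. By lemma \ref{rao3}, $h_F(Q) = (xy\zeta, -xy) = (\zeta,-xy)$ after using bilinearity and $(xy,-xy) = 1$. By definition \ref{rao2},
\[
\gamma(\alpha(\chi),Y) \;=\; \gamma(\psi\circ Q) \;=\; h_F(Q)\,\gamma_F(\psi)^{2}\,\gamma_F(\det Q,\psi) \;=\; (\zeta,-xy)\,\gamma_F(-1,\psi)^{-1}\,\gamma_F(-\zeta,\psi).
\]
Applying lemma \ref{rao1}(1) to split $\gamma_F(-\zeta,\psi) = \gamma_F(-1,\psi)\gamma_F(\zeta,\psi)(-1,\zeta)$, the factor $\gamma_F(-1,\psi)^{\pm 1}$ cancels, leaving $(\zeta,-xy)(-1,\zeta)\gamma_F(\zeta,\psi) = (\zeta,xy)\gamma_F(\zeta,\psi) = (x,\zeta)(y,\zeta)\gamma_F(\zeta,\psi)$, which is the desired formula.

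The only step requiring any care is the bookkeeping at the end, since the answer depends on simplifying several Hilbert symbols modulo squares and juggling $\gamma_F(\psi)^2 = \gamma_F(-1,\psi)^{-1}$; the commutator computation and the identification of $\mathfrak{g}^\bot$ are both routine once the right basis has been chosen.
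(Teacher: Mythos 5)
Your proof is correct and follows essentially the same route as the paper: you choose the same basis $\{V_1,V_2\}$ (the paper's $A$, $B$) of $\mathfrak{g}^\bot$, find the same diagonal Gram matrix $\mathrm{diag}(4xy\zeta,\,-4xy\zeta^2)$, and then run the identical Hasse-invariant/Weil-index bookkeeping via lemmas \ref{rao1}--\ref{rao3} to arrive at $(x,\zeta)(y,\zeta)\gamma_F(\zeta,\psi)$. The only cosmetic difference is that you record the individual commutators $[\alpha(\chi),V_i]$ and $[V_i,Y]$ explicitly before taking traces, whereas the paper writes down $Q(V,W)$ for a general pair directly; both computations are equivalent.
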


\proof
First note that the matrices $A = \mat{1}{0}{0}{-1}$ and $B = \mat{0}{1}{-\zeta}{0}$ form a basis for $\mathfrak{g}^{\bot}$.  Let $V = \mat{a}{b}{-b \zeta}{-a}, W = \mat{c}{d}{-d \zeta}{-c} \in \mathfrak{g}^{\bot}$ be arbitrary.  Then $$Q_{(\alpha(\chi),Y)}(V,W) = -4bdxy \zeta^2 + 4acxy \zeta.$$

Now $Q_{(\alpha(\chi),Y)}(A,A)  = 4xy \zeta$, $Q_{(\alpha(\chi),Y)}(B,B) = -4xy \zeta^2$, and $Q_{(\alpha(\chi),Y)}(A,B) = Q_{(\alpha(\chi),Y)}(B,A) = 0$.  Therefore the matrix of the quadratic form $Q_{(\alpha(\chi),Y)}$ is $$\mat{4xy \zeta}{0}{0}{-4xy \zeta^2}.$$

By definition \ref{rao2}, we have $$\gamma(\psi \circ Q_{(\alpha(\chi),Y)}) = h_F(Q_{(\alpha(\chi),Y)})\gamma_F(\psi)^n \gamma_F(\mathrm{det}(Q_{(\alpha(\chi),Y)}),\psi).$$

In our case, $n = 2$, and we also have that by lemma \ref{rao3},

$$h_F(Q_{(\alpha(\chi),Y)}) = (4xy \zeta, -4xy \zeta^2) = (\zeta, -xy).$$  Then, lemma \ref{rao1} implies that

$$\gamma_F(\psi)^2 = \gamma_F(-1, \psi)^{-1} \ \mathrm{and}$$

$$\gamma_F(\mathrm{det}(Q_{(\alpha(\chi),Y)}), \psi) = \gamma_F(-16x^2 y^2 \zeta^3, \psi) = \gamma_F(-\zeta, \psi).$$

Thus, by lemma \ref{rao1}, $$\gamma(\psi \circ Q_{(\alpha(\chi),Y)}) = (\zeta, -xy) \gamma_F(-1, \psi)^{-1} \gamma_F(-\zeta, \psi) = $$ $$(\zeta, -xy) \ \gamma_F(\zeta, \psi) (-1, \zeta)   = (x, \zeta)(y, \zeta) \gamma_F(\zeta, \psi). \qedhere $$

\begin{definition}\label{definitionofgammafactor}
Let $(E/F, \chi)$ be an admissible pair such that $\chi$ has level $m \geq 1$.  Let $\alpha(\chi) \in \mathfrak{p}_E^{-m}$ satisfy $\chi(1+x) = \psi(Tr_{E/F}(\alpha(\chi) x)), x \in \mathfrak{p}_E^m$ (see \cite[Section 19]{bushnellhenniart}).  Associated to $(E/F, \chi)$ is a supercuspidal representation $\pi := \pi_{\chi}$ via theorem \ref{positivedepth}.  Now, $\alpha(\chi) \in E^*$, so $\alpha(\chi) = a + x_{\pi} \delta$ for some $a, x_{\pi} \in F$.  Let $\mathrm{deg}(\pi)$ denote the formal degree of $\pi$. Recall from section \ref{setup} our choice of isomorphism $T(F) \cong E^* / F^*$, giving rise to the standard positive root $z \mapsto z / \overline{z}$, for $z \in E^* / F^*$.  Let $\Delta^+$ be a choice of a positive root of $GL(2,\overline{F})$ with respect to the maximal torus $T(\overline{F})$.  Define $\epsilon(\Delta^+)$ to be 1 if $\Delta^+$ is the standard positive root and define $\epsilon(\Delta^+)$ to be $\tilde\tau(-1)$ if $\Delta^+$ is the opposite root. Then, we define $\epsilon(\tilde\chi, \Delta^+, \tau) := \mathrm{deg}(\pi)(x_{\pi}, \zeta) \gamma_F(\zeta, \psi) \tilde\tau(2 \delta) c_{\psi}(\mathfrak{g'}) c_{\psi}^{-1}(\mathfrak{g})  |\eta(\alpha(\chi))|^{-\frac{1}{2}} \epsilon(\Delta^+)$, where $c_{\psi}(\mathfrak{g'}), c_{\psi}(\mathfrak{g}),$ and  $\eta(\alpha(\chi))$ are defined in \cite[Section 5]{debacker}.
\end{definition}

In the calculations we will make throughout the rest of this section and the next, we will make a choice of $\Delta^+$ to be the standard set of positive roots.  Therefore, the term $\epsilon(\Delta^+)$ is just $1$, and therefore this term will not appear in most of our calculations and formulas.  We have shown in section \ref{setup} that our results are independent of the choice of $\Delta^+$.  When we consider the depth zero setting in section \ref{depthzerochapter}, we will define another constant $\epsilon(\tilde\chi, \Delta^+, \tau)$ in terms of $\epsilon(\Delta^+)$, and we will show in that section that our results are independent of the choice of $\Delta^+$.

\subsection{On certain decompositions associated to elements of $E^*$}\label{decompositions}

We need to understand the sets $n(w) = 0$ and $0 < n(w) \leq r/2$ (cf. theorem \ref{DeBacker1}).  We recall some relevant notions and definitions from \cite[Section 5.3, Section 3.2]{debacker}.  In general, we define a filtration on $E^*$ by setting $G_t' := 1 + \mathfrak{p}_E^{\lceil te \rceil}$ for $t > 0$, where $e$ is the ramification index of $E$ over $F$ and $G' := E^*$.  We also define $G_{t^+}' := \bigcup_{s > t} G_s'$ for $t > 0$.  These are just the Moy-Prasad filtrations from \cite{moyprasad}.  We let $Z(G)$ denote the center of $G = \mathrm{GL}(2,F)$.

\begin{definition}
Let $w \in Z(G) G_{0^+}' \setminus Z(G)$.  Then $n(w)$ is defined by $w \in Z(G) G_{n(w)}' \setminus Z(G) G_{n(w)^+}'$.  A \emph{decomposition of w} by definition is the writing of $w$ in the form $w = ab$ where $a \in Z(G), b \in G_{n(w)}'$.
\end{definition}

\begin{definition}
If $w$ is not in $Z(G) G_{0^+}' = F^*(1 + \mathfrak{p}_E)$, then define $n(w) = 0$.
\end{definition}

We will now determine decompositions of all elements of $E^* \setminus F^*$.  We first deal with the situation where $E/F$ is ramified.  Without loss of generality we write $E = F(\sqrt{p})$.  Note that we are not interested in the cases where $w \in F^*$, since distribution characters are only defined on the regular set.

\begin{lemma}
Let $w = p^n u + p^m v \delta \in E^*$, where $u,v \in \mathfrak{o}_F^*$, and $n,m \in \mathbb{Z}$ such that $n \leq m$.  We can rewrite w as $$w = p^n u \left(1 + p^{m-n + \frac{1}{2}} \frac{v}{u} \right).$$  Thus, $w \in F^* (1 + \mathfrak{p}_E^{2m-2n+1})$.
Moreover, w is not in $F^* (1 + \mathfrak{p}_E^{2m-2n+2})$.  Therefore, a decomposition of w is
$$w = p^n u \left(1 + p^{m-n+\frac{1}{2}} \frac{v}{u} \right) = p^n u \left(1 + \sqrt{p}^{2m-2n+1} \frac{v}{u} \right).$$
\end{lemma}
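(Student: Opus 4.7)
The plan is to verify the identity by direct algebraic manipulation and then establish the two valuation claims by keeping careful track of $E$-valuations using that $\sqrt{p}$ is a uniformizer of $E$ in the ramified case.

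First I would establish the factorization. Since $v_F(u) = 0$ and $u \in \mathfrak{o}_F^*$, one has $p^n u \in F^*$, so factoring gives
\[
w = p^n u + p^m v \sqrt{p} = p^n u\left(1 + p^{m-n}\frac{v}{u}\sqrt{p}\right).
\]
Because $p = (\sqrt{p})^2$, we can rewrite $p^{m-n}\sqrt{p} = \sqrt{p}^{\,2(m-n)+1}$, so the correction term has $E$-valuation exactly $2m-2n+1$ (using that $v/u \in \mathfrak{o}_F^*$). This proves $w \in F^*(1+\mathfrak{p}_E^{2m-2n+1})$ and exhibits the claimed decomposition.

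Next I would prove the non-containment $w \notin F^*(1+\mathfrak{p}_E^{2m-2n+2})$ by contradiction. Suppose $w = c(1+y)$ with $c \in F^*$ and $y \in \mathfrak{p}_E^{2m-2n+2}$. Write $y = a + b\sqrt{p}$ with $a, b \in F$. Since $v_E(a+b\sqrt{p}) = \min(2 v_F(a),\, 2 v_F(b)+1)$ (the two summands having $E$-valuations of different parities, hence no cancellation), the condition $v_E(y) \geq 2m-2n+2$ forces $v_F(a) \geq m-n+1$ and $v_F(b) \geq m-n+1$ (the second inequality because $2v_F(b)+1$ must be at least the even integer $2m-2n+2$, which gives $v_F(b) \geq m-n+1/2$ and hence $v_F(b) \geq m-n+1$ by integrality). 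Matching the $F$- and $\sqrt{p}$-components of $w = c + ca + cb\sqrt{p}$ against $w = p^n u + p^m v\sqrt{p}$ yields $c(1+a) = p^n u$ and $cb = p^m v$. The first equation, together with $1+a \in 1 + \mathfrak{p}_F^{m-n+1} \subset \mathfrak{o}_F^*$, forces $v_F(c) = n$; the second then gives $v_F(b) = m - n$, contradicting $v_F(b) \geq m-n+1$.

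There is no real obstacle here; the only subtle point is the parity argument used to compute $v_E(a+b\sqrt{p})$, which relies on $E/F$ being totally ramified of degree $2$ so that $v_E(\sqrt p)=1$ while $v_E(F^*) = 2\mathbb{Z}$. Everything else is routine bookkeeping of valuations.
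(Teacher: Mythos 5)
Your proof is correct and follows essentially the same approach as the paper: both establish the non-containment by matching the $1$- and $\sqrt{p}$-components of the two sides and comparing $F$-valuations. Your version simply spells out the parity argument and valuation bookkeeping that the paper compresses into ``one may now consider power series expansions of both sides and compare leading coefficients.''
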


\begin{proof}
Suppose by way of contradiction that $w = x(1 + s(\sqrt{p})^{2m-2n+2}), s \in \mathfrak{o}_E^*, x \in F^*$.  Write $x^{-1}p^n u(1 + v' p^{m-n+1/2}) = 1 + sp^{m-n+1}$ where $v' = \frac{v}{u}$.   $x \in F^*$ is arbitrary, therefore $x^{-1} p^n u \in F^*$ is arbitrary, so the proof of the lemma reduces to showing that there is no $y \in F^*$ such that $y(1 + v' p^{m-n+1/2}) = 1 + sp^{m-n+1}$.  One may now consider power series expansions of both sides and compare leading coefficients to get the result.
\end{proof}

\begin{lemma}
(1) \ Let $w = p^n u + p^m v \delta \in E^*$ where $u = 0, v \neq 0$. Then $n(w) = 0$.

(2) \ Let $w = p^n u + p^m v \delta \in E^*$, where $u,v \neq 0$, $n > m$. Then $n(w) = 0$.
\end{lemma}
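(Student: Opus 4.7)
The plan is to verify directly that $w \notin F^*(1 + \mathfrak{p}_E)$ in each case, since by the definition of $n(w)$ recalled just above the statement, this is exactly the condition $n(w) = 0$.

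First I would record the following elementary description of $\mathfrak{p}_E$ in the ramified case $E = F(\sqrt{p})$. Since $\delta = \sqrt{p}$ is a uniformizer of $E$ with $v_E(\delta) = 1$ and $v_E(p) = 2$, for $a, b \in F$ the two valuations $2 v_F(a)$ and $2 v_F(b) + 1$ have opposite parities, so $v_E(a + b \delta) = \min(2 v_F(a), 2 v_F(b) + 1)$. Consequently $a + b\delta \in \mathfrak{p}_E$ iff $a \in \mathfrak{p}_F$ and $b \in \mathfrak{o}_F$. In particular, any element of $1 + \mathfrak{p}_E$ has the form $1 + a + b\delta$ with $a \in \mathfrak{p}_F$ and $b \in \mathfrak{o}_F$.

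For part (1), write $w = p^m v \delta$ and suppose toward a contradiction that $w = x(1 + a + b\delta)$ with $x \in F^*$, $a \in \mathfrak{p}_F$, and $b \in \mathfrak{o}_F$. Comparing the $F$-component and the $\delta$-component (using that $1, \delta$ are $F$-linearly independent) gives $x(1 + a) = 0$ and $x b = p^m v$. The first equation forces $x = 0$ since $1 + a$ is a unit, contradicting $x \in F^*$.

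For part (2), write $w = p^n u + p^m v \delta$ with $n > m$ and $u, v \in \mathfrak{o}_F^*$, and again assume $w = x(1 + a + b\delta)$ as above. The analogous comparison yields $x(1+a) = p^n u$ and $xb = p^m v$. The first equation forces $v_F(x) = n$ (since $1+a$ is a unit), and substituting into the second forces $v_F(b) = m - n < 0$, contradicting $b \in \mathfrak{o}_F$. Hence $w \notin F^*(1 + \mathfrak{p}_E)$ and $n(w) = 0$.

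There is no genuine obstacle here: the entire argument is a parity-of-valuation comparison once one separates the $F$-part from the $\delta$-part of elements of $1 + \mathfrak{p}_E$. The only thing to be careful about is the ramified description of $\mathfrak{p}_E$, which is what makes the two cases fail to lie in $F^*(1 + \mathfrak{p}_E)$ for slightly different reasons (absence of an $F$-component in (1), versus excessive vanishing of the $F$-component relative to the $\delta$-component in (2)).
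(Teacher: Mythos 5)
Your proof is correct, and it fills in exactly what the paper leaves as ``the proofs are similar'': the argument in both cases is a direct contradiction from the assumption $w \in F^*(1+\mathfrak{p}_E)$, using the structure of $\mathfrak{p}_E$ in the totally ramified extension $E = F(\sqrt{p})$. Your explicit observation that $a+b\delta\in\mathfrak{p}_E$ if and only if $a\in\mathfrak{p}_F$ and $b\in\mathfrak{o}_F$ (a consequence of the odd/even valuation split) lets you compare $F$-components and $\delta$-components cleanly, which is a slightly more transparent packaging than the power-series-coefficient comparison the paper invokes in the preceding lemma, but it is the same underlying idea.
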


\begin{proof}
The proofs are similar.
\end{proof}

We now describe decompositions of elements of $E^*$ when $E/F$ is unramified.  Then $E = F(\sqrt{\zeta})$ where $\zeta \in F^*$ is a non-square unit.  The proofs of the following lemmas are similar as in the ramified case.

\begin{lemma}
Let $w = p^n u + p^m v \delta \in E^*$, where $u,v \in F$ both non zero, $n < m$.  Then $n(w) > 0$ and a decomposition of w is $$w = p^n u \left(1 + p^{m-n} \frac{\delta v}{u} \right).$$
\end{lemma}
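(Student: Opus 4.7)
The plan is to follow the pattern of the ramified case given earlier in the section, with the only adjustment being that in the unramified setting the ramification index is $e=1$, so $\mathfrak{p}_E = p\mathfrak{o}_E$ and the exponents stay integer-valued (no half-integers as in the ramified lemma). First I would verify the factorization by direct multiplication:
\[
p^n u\left(1 + p^{m-n}\tfrac{\delta v}{u}\right) = p^n u + p^m v \delta = w.
\]
Since $p^n u \in F^* = Z(G)$ and $1 + p^{m-n}\delta v / u \in 1 + p^{m-n}\mathfrak{o}_E = 1 + \mathfrak{p}_E^{m-n}= G_{m-n}'$, this exhibits $w$ as an element of $Z(G)\, G_{m-n}'$. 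Because $n < m$, we have $m-n \geq 1$, so already $n(w) \geq m - n > 0$, which is the first assertion.

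To promote this identity to a genuine \emph{decomposition} in the sense of the earlier definition, I must show equality $n(w) = m - n$, i.e.\ that $w \notin F^*(1 + \mathfrak{p}_E^{m-n+1})$. Arguing by contradiction, suppose $w = y(1 + s p^{m-n+1})$ for some $y \in F^*$ and $s \in \mathfrak{o}_E$. Writing $s = s_1 + s_2 \delta$ with $s_1, s_2 \in \mathfrak{o}_F$ in the $F$-basis $\{1,\delta\}$ of $E$ and matching the two components against $w = p^n u + p^m v\delta$ gives the system
\[
y(1 + s_1 p^{m-n+1}) = p^n u, \qquad y s_2 p^{m-n+1} = p^m v.
\]
Eliminating $y$ yields $p\cdot s_2 u = v(1 + s_1 p^{m-n+1})$; the left side lies in $\mathfrak{p}_F$, while the right side is a unit since $v \in \mathfrak{o}_F^*$ and $1 + s_1 p^{m-n+1} \in 1 + \mathfrak{p}_F$, a contradiction. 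Hence $n(w) = m-n$ and the displayed factorization is a decomposition of $w$.

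The only real thing to get right is the correct computation of $\mathfrak{p}_E$ in the unramified case, so that the filtration subgroup $G_{m-n}'$ is indexed by an integer; beyond that, the argument is a transcription of the ramified proof, which is exactly what the author means by \emph{``similar''}. No genuine obstacle is anticipated.
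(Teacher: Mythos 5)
Your proof is correct and follows exactly the approach the paper intends: the paper gives no explicit argument for the unramified lemmas, saying only that the proofs are ``similar as in the ramified case,'' and your argument is precisely that transcription, with the ramification index $e=1$ making the filtration indices integral. Your explicit component-matching contradiction (eliminating $y$ to get $p\,s_2 u = v(1+s_1 p^{m-n+1})$, a unit equal to a non-unit) is a cleaner spelling-out of the ``compare leading coefficients of power series'' step that the paper leaves informal in the ramified case.
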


\begin{lemma}
(1) Let $w = p^n u + p^m v \delta \in E^*$, where $u = 0, v \neq 0$.  Then $n(w) = 0$.

(2) Let $w = p^n u + p^m v \delta \in E^*$, where $u,v \neq 0$ and $n \geq m$.  Then $n(w) = 0$.
\end{lemma}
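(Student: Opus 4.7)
The plan is to mirror the contradiction argument used in the ramified case: assume that $w = x(1 + y)$ with $x \in F^*$ and $y \in \mathfrak{p}_E$ and derive a contradiction, since this would exactly refute membership in $F^*(1 + \mathfrak{p}_E) = Z(G) G_{0^+}'$. First I would exploit the hypothesis that $E/F$ is unramified in two ways: the $E$-valuation restricted to $F$ equals $v_F$, and $\mathfrak{p}_E = \mathfrak{p}_F + \mathfrak{p}_F \delta$ as an $\mathfrak{o}_F$-module, since $\delta$ is a unit in $\mathfrak{o}_E$. Because $1 + y \in 1 + \mathfrak{p}_E$ is a unit, $v_E(w) = v_E(x) = v_F(x)$, so the proof begins by reading off $v_E(w)$ from the given expression. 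In both parts one gets $v_E(w) = m$, which forces $x = p^m u'$ for some $u' \in \mathfrak{o}_F^*$.

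With $x$ pinned down, I would solve explicitly for $y = w/x - 1$ in the $\{1, \delta\}$-basis and check component by component whether $y \in \mathfrak{p}_F + \mathfrak{p}_F \delta$. In part (1), this gives $y = (v/u')\delta - 1$, whose $F$-component is $-1 \notin \mathfrak{p}_F$. In part (2), one gets $y = (p^{n-m} u/u' - 1) + (v/u')\delta$, whose $\delta$-component is $v/u' \in \mathfrak{o}_F^*$, again not in $\mathfrak{p}_F$. In either case the contradiction is immediate, and we conclude $w \notin F^*(1 + \mathfrak{p}_E)$, i.e., $n(w) = 0$.

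There is no real obstacle here; this is a routine coordinate computation. The only step that needs care is the subcase $n = m$ of part (2): the $F$-component of $y$ equals $u/u' - 1$, which \emph{can} legitimately lie in $\mathfrak{p}_F$ (choose $u'$ close to $u$), so the contradiction must be extracted from the $\delta$-component rather than the $F$-component. Once one fixes attention on the $\delta$-component throughout, the proof goes through uniformly for both parts.
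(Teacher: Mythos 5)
Your argument is correct and is essentially the same contradiction-by-coordinates computation that the paper implicitly invokes by saying the proof is "similar to the ramified case," where it reduces to a leading-coefficient comparison; your version just fleshes out the details using the $\{1,\delta\}$-basis and the unramified valuation. You also correctly identify the one subtlety — in the subcase $n=m$ of (2) the contradiction must come from the $\delta$-component, not the $F$-component — so the proof is complete.
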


\subsection{On the proof that our character formulas agree with positive depth supercuspidal characters}\label{theproofofcharacters}

Here we prove that on the range $\{ z \in T(F)^{reg} : 0 \leq n(z) \leq r/2 \}$, where $T(F)$ is an elliptic torus in $\mathrm{PGL}(2,F)$, our conjectured character formula agrees with the character of a specific positive depth supercuspidal representation of $\mathrm{PGL}(2,F)$, and this supercuspidal is the one given by the local Langlands correspondence.

In the remainder of the section and the next, we will deal exclusively with $\mathrm{GL}(2,F)$, and so we set $T(F) = E^*$.  Recall from section \ref{setup} that our proposed character formula pulls back to $\mathrm{GL}(2,F)$ to give $$F(\tilde\chi)(w) = \epsilon(\tilde\chi, \Delta^+, \tau) \
\frac{\chi(w) + (-1, \zeta) \chi(\overline{w})}{ \tilde\tau(w-\overline{w}) |D(w)|^{1/2}}, \ \ w \in T(F)^{reg}.$$ It will be useful for computational purposes to rewrite this formula as
$$F(\tilde\chi)(w) = \epsilon(\tilde\chi, \Delta^+, w) \
\frac{\chi(w) + (-1, \zeta) \chi(\overline{w})}{ \tilde\tau(\frac{w-\overline{w}}{2 \delta})}, \ \ w \in T(F)^{reg}.$$
where $\epsilon(\tilde\chi, \Delta^+, w) = \mathrm{deg}(\pi) (x_\pi, \zeta) \gamma_F(\zeta, \psi) C \epsilon(\Delta^+)$ and $$C := c_{\psi}(\mathfrak{g'}) c_{\psi}^{-1}(\mathfrak{g}) |D(w)|^{-1/2} |\eta(\alpha(\chi))|^{-1/2}$$ is as in section \ref{theconstantepsilon}.  We will use this rewritten version for the rest of section 5.  Note that $F(\tilde\chi)$ is independent of the choice of $\tau$ because $\frac{w - \overline{w}}{2 \delta} \in F^*$, and we have required only that $\tilde\tau|_{F^*} = \aleph_{E/F}$.  We will start by assuming that all of our admissible pairs $(E/F, \chi)$ are \emph{minimal} (cf. definition \ref{minimaladmissible}), and then we will show that there is no harm in assuming this, and that all of our results are true for arbitrary admissible pairs.

First we consider the case that $E/F$ is ramified, so we may take $E = F(\sqrt{p})$ without loss of generality.  In other words, we take $\zeta = p$.  The same proofs work in the case $E = F(\sqrt{dp})$, where $d \in \mathfrak{o}_F^*$ is not a square.  We must first conduct a careful analysis of the supercuspidal characters in the $0 < n(w) \leq r/2$ range.  Recall that on the regular set, $n(w) > 0$ if and only if $w = p^n u + p^m v \delta$ where $n \leq m$.  First we need the following lemma.

\begin{lemma}{\label{infinitesquareroot}}
Let F be a local field whose residual characteristic is not 2.  Suppose $\lambda$ is a character of $F^*$ whose order is a power of 2. Then $\lambda|_{1 + \mathfrak{p}_E} \equiv 1$.
\end{lemma}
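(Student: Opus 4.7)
The plan is to exploit the fact that $1+\mathfrak{p}_F$ is a pro-$p$ group where $p$ is the residual characteristic of $F$, and by hypothesis $p$ is odd. In any such abelian pro-$p$ group, multiplication by $2$ is invertible (since $2 \in \mathbb{Z}_p^*$), so every element admits a unique $2^k$-th root for every $k \ge 0$. Once we have this, the lemma is essentially immediate.

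To make the $2$-divisibility concrete, I would apply Hensel's lemma to the polynomial $f(X) = X^{2^k} - u$ for a given $u \in 1+\mathfrak{p}_F$. At $X=1$ one has $f(1) = 1-u \in \mathfrak{p}_F$ and $f'(1) = 2^k \in \mathfrak{o}_F^*$ (using $p \neq 2$), so Hensel produces a root $v \in 1+\mathfrak{p}_F$ satisfying $v^{2^k}=u$. A slicker alternative is to invoke the $p$-adic logarithm, which gives a continuous group isomorphism $1+\mathfrak{p}_F \xrightarrow{\sim} \mathfrak{p}_F$ when $p$ is odd, and to transport $2$-divisibility of $\mathfrak{p}_F$ (a $\mathbb{Z}_p$-module) back to $1+\mathfrak{p}_F$; but Hensel is the most elementary route.

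With the $2^k$-th root in hand, the argument closes in one line. Writing the order of $\lambda$ as $2^k$ for some $k \ge 0$, and choosing for each $u \in 1+\mathfrak{p}_F$ an element $v \in 1+\mathfrak{p}_F$ with $v^{2^k}=u$, we get
$$\lambda(u) \;=\; \lambda(v^{2^k}) \;=\; \lambda(v)^{2^k} \;=\; 1,$$
since $\lambda^{2^k}$ is the trivial character of $F^*$.

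There is no real obstacle; the only thing to watch is that the hypothesis on residue characteristic enters exactly at the point where one needs $2^k$ to be invertible modulo $\mathfrak{p}_F$, which is precisely what powers the Hensel (or logarithm) step. If one wanted to remove the assumption that the order of $\lambda$ is a power of $2$ and just require it coprime to $p$, the same argument would still apply verbatim with $2^k$ replaced by the order.
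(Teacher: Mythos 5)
Your proof is correct and is essentially the paper's argument: the paper iteratively extracts square roots inside $1+\mathfrak{p}_E$ (Hensel for $X^2-u$) and inducts, while you apply Hensel once to $X^{2^k}-u$, but both rest on the $2$-divisibility of principal units when the residual characteristic is odd. One side remark: the lemma as printed has a mismatch (a character of $F^*$ cannot be restricted to $1+\mathfrak{p}_E$) — in the paper's applications $\lambda$ is a character of $E^*$ and the restriction is to $1+\mathfrak{p}_E$ — but your argument with $1+\mathfrak{p}_F$ transports verbatim to $E$, which also has odd residual characteristic.
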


\begin{proof}
Let $w \in 1 + \mathfrak{p}_E$.  Then $w$ is a square since the leading term in the power series expansion is $1$.  Moreover, one of the square roots of $w$ is in $1 + \mathfrak{p}_E$.  We may then proceed inductively to conclude that $w$ is a $2^n$-th power for any $n \in \mathbb{N}$.
\end{proof}

\begin{lemma}\label{gammafactors}
Let $w \in E^*$ such that $n(w) > 0$, and write a decomposition $w = c(1 + Y)$ as in section \ref{decompositions} and theorem \ref{DeBacker1}.  Then $\gamma(\alpha(\chi), Y) = (x_\pi, \zeta) \gamma_F(\zeta, \psi) \nu(\frac{w - \overline{w}}{2 \delta}) \nu(w)$ for any character $\nu$ of $E^*$ whose restriction to $F^*$ is $\aleph_{E/F}$ and whose order is a power of 2.  Similarly, $\gamma(\alpha(\chi), {}^s Y) = (x_\pi, \zeta) \gamma_F(\zeta, \psi)\nu(\frac{\overline{w} - w}{2 \delta}) \nu(\overline{w}) \ \forall w \in E^* : n(w) > 0$ where $1 \neq s \in W = W(G(F),T(F))$.
\end{lemma}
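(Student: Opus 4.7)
The plan is to reduce the identity to the explicit Weil-index computation of Lemma \ref{weilcalculation}, after which everything collapses to a bookkeeping exercise with Hilbert symbols on $F^*$.

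First I would write the regular element $w \in E^*$ in coordinates as $w = p^n u + p^m v \delta$ with $u, v \in \mathfrak{o}_F^*$ (regularity forces $v \neq 0$), and with $n \leq m$ in the ramified case or $n < m$ in the unramified case, as dictated by the decomposition lemmas of Section \ref{decompositions}. Those lemmas pin down $c = p^n u \in F^*$ and $Y = p^{m-n}(v/u)\delta$, so in the basis $\{1, \delta\}$ the $1$-coordinate of $Y$ is $0$ and the $\delta$-coordinate is $y = p^{m-n} v / u$. Writing $\alpha(\chi) = a + x_\pi \delta$ and applying Lemma \ref{weilcalculation} then gives
\[
\gamma(\alpha(\chi), Y) = (x_\pi, \zeta)\,(y, \zeta)\,\gamma_F(\zeta, \psi),
\]
so the problem collapses to proving the identity $(y, \zeta) = \nu\bigl(\tfrac{w - \overline{w}}{2\delta}\bigr)\,\nu(w)$.

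Next I would evaluate the right-hand side explicitly. Since $w - \overline{w} = 2 p^m v \delta$, the element $\tfrac{w - \overline{w}}{2\delta} = p^m v$ lies in $F^*$, and the hypothesis $\nu|_{F^*} = \aleph_{E/F}$ identifies $\nu(p^m v)$ with the Hilbert symbol $(p^m v, \zeta)$. For $\nu(w)$, I would factor $w = p^n u\bigl(1 + \tfrac{p^{m-n} v}{u}\delta\bigr)$ and observe that the bracketed factor lies in $1 + \mathfrak{p}_E$: the $E$-adic valuation of $\tfrac{p^{m-n} v}{u}\delta$ equals $2(m-n)+1 \geq 1$ when $E/F$ is ramified (so $\delta = \sqrt{p}$ is a uniformizer) and $m-n \geq 1$ when $E/F$ is unramified. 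Since $\nu$ has order a power of $2$, Lemma \ref{infinitesquareroot} applied to the local field $E$ (whose residual characteristic is also odd) yields $\nu|_{1 + \mathfrak{p}_E} = 1$, and therefore $\nu(w) = \nu(p^n u) = (p^n u, \zeta)$. Multiplying, the right-hand side equals $(p^m v, \zeta)(p^n u, \zeta) = (p^{m+n} uv, \zeta)$.

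It then remains to verify $(p^{m-n} v/u, \zeta) = (p^{m+n} uv, \zeta)$, which follows because their ratio is $(p^{2n} u^2, \zeta)$, a Hilbert symbol of a square. This completes the first identity. For the second identity, the nontrivial Weyl element $s \in W$ acts on $\mathfrak{g}' = E$ by Galois conjugation, so ${}^s Y = -Y$ flips the $\delta$-coordinate; Lemma \ref{weilcalculation} then replaces $(y, \zeta)$ by $(-y, \zeta) = (-1, \zeta)(y, \zeta)$. On the other side, ${}^s w = \overline{w}$ gives $\tfrac{\overline{w} - w}{2\delta} = -p^m v$, contributing exactly one extra factor of $(-1, \zeta)$, while the analogous factorization $\overline{w} = p^n u\bigl(1 - \tfrac{p^{m-n}v}{u}\delta\bigr)$ again places the correction term in $1 + \mathfrak{p}_E$, giving $\nu(\overline{w}) = (p^n u, \zeta)$. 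The two extra $(-1, \zeta)$ factors match, and the identity follows by the same square argument. The one step requiring care is the uniform handling of the ramified versus unramified case when checking that the correction factor lies in $1 + \mathfrak{p}_E$; past that, the argument is purely formal manipulation of Hilbert symbols.
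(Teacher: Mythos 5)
Your proof is correct and follows essentially the same route as the paper's: invoke Lemma \ref{weilcalculation} to express $\gamma(\alpha(\chi), Y)$ via the Hilbert symbol of the $\delta$-coordinate $y$ of $Y$, then identify that symbol with $\nu\bigl(\tfrac{w-\overline{w}}{2\delta}\bigr)\nu(w)$ using the decomposition $w = c(1+Y)$ and the triviality of $\nu$ on $1 + \mathfrak{p}_E$ from Lemma \ref{infinitesquareroot}. The only cosmetic difference is that you close the computation by observing the ratio $(p^n u)^2$ is a square, whereas the paper writes $y = n/m$ with $m = \tfrac{w+\overline{w}}{2}$, $n = \tfrac{w-\overline{w}}{2\delta}$ and shows directly that $\nu\bigl(\tfrac{w+\overline{w}}{2}\bigr) = \nu(w)$ via $\tfrac{1}{2}(1 + \overline{w}/w) \in 1 + \mathfrak{p}_E$; these are equivalent manipulations.
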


\begin{proof}
Recall from lemma \ref{weilcalculation} that if $\alpha(\chi), Y$ embed in $\mathfrak{g}'$ as $\alpha(\chi) = \mat{a}{x_{\pi}}{x_{\pi} \zeta}{a}$ and $Y = \mat{t}{y}{y \zeta}{t}$, then $\gamma(\alpha(\chi), Y) = (x_{\pi}, \zeta) (y, \zeta) \ \gamma_F(\zeta, \psi)$.  If $w = m + n \delta \in E^*$ where $m,n \in F^*$, then a decomposition of $w$ is $w = m(1 + (n/m) \delta)$.  Thus, since $(n/m, \zeta) = (n, \zeta) (m, \zeta)$, and since $m = \frac{w+\overline{w}}{2}$ and $n = \frac{w - \overline{w}}{2 \delta}$, we have that $$\gamma(\alpha(\chi), Y) = (x_\pi, \zeta) \gamma_F(\zeta, \psi) \nu \left(\frac{w - \overline{w}}{2 \delta} \right) \nu \left(\frac{w + \overline{w}}{2} \right).$$

\noindent Now, $\nu(\frac{w + \overline{w}}{2}) = \nu(w) \nu(\frac{1}{2}(1 + \overline{w}/w))$, but since $n(w) > 0$, we have $w \in F^* (1 + \mathfrak{p}_E)$, so $\overline{w}/w \in 1 + \mathfrak{p}_E$. So $\frac{1}{2}(1 + \overline{w}/w) \in 1 + \mathfrak{p}_E$, and therefore $\nu(\frac{1}{2}(1 + \overline{w}/w)) = 1$ by lemma \ref{infinitesquareroot}.
\end{proof}

Therefore, we can simplify the supercuspidal character of theorem \ref{DeBacker1}, in the $0 < n(w) \leq r/2$ range, to $$\theta_{\pi}(w) = \epsilon(\tilde\phi, \Delta^+, w)  \ \frac{\phi(w) \nu(w) + (-1, \zeta) \phi(\overline{w}) \nu(\overline{w})}{\nu(\frac{w-\overline{w}}{2 \delta})} \ \forall w \in E^* : 0 < n(w) \leq r/2$$ where $\nu$ is any character of $E^*$ whose restriction to $F^*$ is $\aleph_{E/F}$ and whose order is a power of 2.  Note that $\nu(\frac{w-\overline{w}}{2 \delta}) = \pm 1$, and we have moved this term to the denominator in $\theta_{\pi}(w)$.

Suppose we set $\phi = \chi \nu^{-1}$, where $\nu$ is some character of $E^*$ whose restriction to $F^*$ is $\aleph_{E/F}$ and whose order is a power of 2. It is easy to see that $\alpha(\chi) = \alpha(\phi)$ since $\nu|_{1 + \mathfrak{p}_E} \equiv 1$.  Therefore, $\epsilon(\tilde\phi, \Delta^+, w) = \epsilon(\tilde\chi, \Delta^+, w)$.  Therefore, the above analysis shows $F(\tilde\chi)(w) = \theta_{\pi}(w) \ \forall w \in E^* : 0 < n(w) \leq r/2$ where $\phi = \chi \nu^{-1}$ for any character $\nu$ of $E^*$ whose restriction to $F^*$ is $\aleph_{E/F}$ and whose order is a power of 2.

We will need to investigate the $n(w) = 0$ range to see which such characters $\nu$ can arise, if any.  We will show that our conjectured formula agrees with a supercuspidal character in the $n(w) = 0$ range, for a unique $\nu$.  We will also show that $\nu = {}_F \mu_{\chi}^{-1}$, and so $\phi = \chi {}_F \mu_{\chi}$.

In the next lemma, we will calculate $F(\tilde\chi)$ on the $n(w) = 0$ range, giving a simpler formula in terms of any character $\Omega$ of $E^*$ whose restriction to $F^*$ is $\aleph_{E/F}$ and whose order is a power of 2.  We introduce the new notation $\Omega$ instead of using $\nu$ for the following reason.  We want to show now that, on the $n(w) = 0$ range, $F(\tilde\chi)$ agrees with $\theta_{\pi}$ for $\phi = \chi \nu^{-1}$ for some character $\nu$ of $E^*$ whose restriction to $F^*$ is $\aleph_{E/F}$ and whose order is a power of 2.  We want to show that there is some choice of $\Omega$ and some choice of $\nu$, such that our conjectural character formula $F(\tilde\chi)$ agrees with $\theta_{\pi}$ on the $n(w) = 0$ range, and that these choices are in fact unique.  Both character formulas do not depend on the choice of $\Omega$ and $\nu$, on the $n(w) = 0$ range.  However, they agree on the $n(w) = 0$ range only for a specific choice of $\Omega$ and $\nu$, and this is where the character twist ${}_F \mu_{\chi}$ will pop out.  Indeed, we will show in the end that we are forced to set $\Omega = \nu^{-1}$.  Therefore, if we used the notation $\nu$ instead of $\Omega$, this may have caused serious confusion, since the reader may have thought that $F(\tilde\chi)$ and $\theta_{\pi}$ agreed on the $n(w) = 0$ range if one sets $\phi = \chi \nu^{-1}$ and one sets $\Omega = \nu$ in the next lemma (indeed, this would not hold since $\nu$ is sometimes quartic).  We note that we will use the notation $\Omega$ again when we consider the case that $E/F$ is unramified.

\begin{lemma}
$$F(\tilde\chi)(w) = \epsilon(\tilde\chi, \Delta^+, w) \
(\chi(w)\Omega(\frac{w}{\delta}) + \chi(\overline{w})\Omega(\frac{\overline{w}}{\delta})) \ \ \forall w \in E^* : n(w) = 0$$ for any character $\Omega$ of $E^*$ whose restriction to $F^*$ is $\aleph_{E/F}$ and whose order is a power of 2.
\end{lemma}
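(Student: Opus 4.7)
The plan is to directly verify the claimed identity
\[
\frac{\chi(w) + (-1,\zeta)\chi(\overline{w})}{\tilde\tau\!\left(\tfrac{w-\overline{w}}{2\delta}\right)} \;=\; \chi(w)\,\Omega(w/\delta) + \chi(\overline{w})\,\Omega(\overline{w}/\delta)
\]
by case analysis, using the classification of $n(w)=0$ elements from Section \ref{decompositions}. Since we are in the ramified case ($\zeta=p$, $\delta=\sqrt{p}$, so $v_E(\delta)=1$), regularity plus $n(w)=0$ forces either (i) $w=p^m v\delta$ with $u=0$, or (ii) $w = p^n u + p^m v\delta$ with $u,v\neq 0$ and $n>m$. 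In both subcases a direct computation gives $(w-\overline{w})/(2\delta)=p^m v\in F^*$, so
\[
\tilde\tau\!\left(\tfrac{w-\overline{w}}{2\delta}\right) = \aleph_{E/F}(p^m v),
\]
which takes values in $\{\pm 1\}$.

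Next, I would compute $w/\delta$ and $\overline{w}/\delta$ using $1/\delta = \delta/p$, getting $w/\delta = p^m v + p^{n-1}u\delta$ and $\overline{w}/\delta = -p^m v + p^{n-1}u\delta$. In the ramified case $v_E(p^{n-1-m}u\delta/v) = 2(n-1-m)+1 \geq 1$, so these factor as
\[
w/\delta = (p^m v)(1+\eta_1), \qquad \overline{w}/\delta = (-p^m v)(1+\eta_2), \qquad \eta_1,\eta_2 \in \mathfrak{p}_E.
\]
(When $u=0$ the element already lies in $F^*$.) Since $\Omega$ has $2$-power order, Lemma \ref{infinitesquareroot} forces $\Omega$ to be trivial on $1+\mathfrak{p}_E$, so
\[
\Omega(w/\delta) = \aleph_{E/F}(p^m v), \qquad \Omega(\overline{w}/\delta) = \aleph_{E/F}(-1)\,\aleph_{E/F}(p^m v) = (-1,\zeta)\,\aleph_{E/F}(p^m v),
\]
where the last equality uses $\Omega|_{F^*}=\aleph_{E/F}$ together with the local class field theory identity $\aleph_{E/F}(-1)=(-1,\zeta)$.

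Substituting into both sides and using that $\aleph_{E/F}(p^m v)$ is its own inverse, each side equals $\aleph_{E/F}(p^m v)\bigl[\chi(w)+(-1,\zeta)\chi(\overline{w})\bigr]$, proving the identity. Independence of $\Omega$ is automatic from this computation: the value of $\Omega$ only enters through the leading $F^*$-part $p^m v$, where every admissible $\Omega$ agrees with $\aleph_{E/F}$, while the unit factor lying in $1+\mathfrak{p}_E$ is killed by Lemma \ref{infinitesquareroot}.

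The only delicate point is the factorization $w/\delta = (p^m v)(1+\eta)$ with $\eta\in\mathfrak{p}_E$; this is where ramification ($v_E(\delta)=1$) is used essentially. In the unramified case $\delta$ is a unit, so this step fails for $n=m$ and the element $1+\eta$ can sit outside $1+\mathfrak{p}_E$; this explains why the unramified case must be treated separately in the subsequent portion of the paper, and why the author emphasizes that the notation $\Omega$ will be reintroduced there.
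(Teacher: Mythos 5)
Your proof is correct and takes essentially the same route as the paper's: in both cases the key steps are (a) identify the $F^*$-part $p^m v$ of $w/\delta$ in the decomposition $w = p^n u + p^m v\delta$ with $n>m$, (b) observe the remaining factor lies in $1+\mathfrak{p}_E$ and is killed by the $2$-power-order hypothesis via Lemma~\ref{infinitesquareroot}, and (c) evaluate $\Omega$ on $F^*$ via $\Omega|_{F^*}=\aleph_{E/F}$ so that $\tilde\tau\bigl(\tfrac{w-\overline{w}}{2\delta}\bigr)=\Omega(w/\delta)$. The paper packages this as the intermediate identity $\tilde\tau\bigl(\tfrac{w-\overline{w}}{2\delta}\bigr)=\Omega(w/\delta)$ followed by the $\pm 1$ and $\tilde\tau(-1)=(-1,\zeta)$ manipulations, while you verify the two sides of the lemma agree directly; the underlying computation is identical.
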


\proof
Recall that $n(w) = 0$ if and only if either i) $w = p^n u + p^m v \delta$, where $u,v$ are both nonzero, $n > m$, or ii) $w = p^m v \delta$, where $v$ is nonzero.  In case (i), $\frac{w}{\delta} = p^m v + p^{n} \frac{u}{\delta} = p^m v(1 + p^{n-m} \frac{u}{v \delta})$.  Note that $1 + p^{n-m} \frac{u}{v \delta} \in 1 + \mathfrak{p}_E$.  $\Omega$ is trivial on $1 + \mathfrak{p}_E$ by lemma \ref{infinitesquareroot}, and thus we have that $\Omega(\frac{w}{\delta}) = \Omega(p^m v(1 + p^{n-m} \frac{u}{v \delta})) = \Omega(p^m v) = \Omega(\frac{w - \overline{w}}{2 \delta}) = \tilde\tau(\frac{w - \overline{w}}{2 \delta})$.  In case (ii), it's clear that $\Omega(\frac{w}{\delta}) = \Omega(\frac{w-\overline{w}}{2 \delta}) = \tilde\tau(\frac{w-\overline{w}}{2 \delta})$. Therefore, we get that $$\tilde\tau(\frac{w-\overline{w}}{2 \delta}) = \Omega(\frac{w}{\delta}) \ \forall w \in E^* : n(w) = 0.$$

Thus, since $\tilde\tau(-1) = (-1, \zeta)$, $F(\tilde\chi)$ simplifies in the $n(w) = 0$ range. $$F(\tilde\chi)(w) = \epsilon(\tilde\chi, \Delta^+, w) \
(\chi(w)\Omega(\frac{w}{\delta}) + \chi(\overline{w})\Omega(\frac{\overline{w}}{\delta})) \ \ \forall w \in E^* : n(w) = 0.$$ Note that $\tilde\tau(\frac{w-\overline{w}}{2 \delta}) = \pm 1$, and we have moved this term from the denominator of $F(\tilde\chi)(w)$ to the numerator.
\qed

\begin{lemma}
$F(\tilde\chi)(w) = \theta_{\pi}(w) \ \forall w \in E^* : n(w) = 0$, for $\phi = \chi \nu^{-1}$, for some character $\nu$ of $E^*$ whose restriction to $F^*$ is $\aleph_{E/F}$ and whose order is a power of 2.  In particular, $F(\tilde\chi)(w) = \theta_{\pi}(w) \ \forall w \in E^* : n(w) = 0$ for $\phi = \chi {}_F \mu_{\chi}$.
\end{lemma}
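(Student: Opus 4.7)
The plan is to compare the two explicit formulas on the $n(w)=0$ range, solve for the twist $\nu$ that makes them agree, and identify it with ${}_F\mu_\chi$ via the Bushnell--Henniart formula together with lemma \ref{psilevel}.

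First I would specialize theorem \ref{DeBacker1} at $n(w)=0$ with $\phi = \chi\nu^{-1}$, where $\nu$ is an unknown character of $E^*$ of $2$-power order with $\nu|_{F^*} = \aleph_{E/F}$. Since $|W|=2$, the Weyl-sum is taken over the Galois involution and collapses to $\phi(w)+\phi(\bar w)$, giving
$$\theta_\pi(w) \;=\; \deg(\pi)\,C\,\lambda(\sigma)\,\bigl(\chi(w)\nu^{-1}(w) + \chi(\bar w)\nu^{-1}(\bar w)\bigr).$$
Next to it I would place the formula of the previous lemma,
$$F(\tilde\chi)(w) \;=\; \deg(\pi)\,(x_\pi,\zeta)\,\gamma_F(\zeta,\psi)\,C\,\bigl(\chi(w)\Omega(w/\delta) + \chi(\bar w)\Omega(\bar w/\delta)\bigr),$$
valid for any $2$-power character $\Omega$ of $E^*$ with $\Omega|_{F^*} = \aleph_{E/F}$.

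Second, I would match the two term-by-term by taking $\Omega = \nu^{-1}$. Writing $\Omega(w/\delta) = \Omega(w)\Omega(\delta)^{-1} = \nu^{-1}(w)\,\Omega(\delta)^{-1}$, the match reduces to the single numerical identity
$$\Omega(\delta) \;=\; \frac{(x_\pi,\zeta)\,\gamma_F(\zeta,\psi)}{\lambda(\sigma)}.$$
Since $\Omega|_{F^*} = \aleph_{E/F}$ and $\Omega$ is trivial on $1+\mathfrak{p}_E$ by lemma \ref{infinitesquareroot}, this single value pins down $\Omega$, and hence $\nu = \Omega^{-1}$, uniquely.

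Third, I would verify that the resulting $\nu$ coincides with ${}_F\mu_\chi$. The defining formula gives
$${}_F\mu_\chi(\delta) \;=\; \aleph_{E/F}(\Gamma(\alpha(\chi),\delta))\,\lambda_{E/F}(\psi)^n.$$
Using $\alpha(\chi) = a + x_\pi\delta$ and comparing leading coefficients, $\Gamma(\alpha(\chi),\delta)$ is congruent to $x_\pi$ modulo $1+\mathfrak{p}_E$ after the canonical normalization, so $\aleph_{E/F}(\Gamma(\alpha(\chi),\delta)) = (x_\pi,\zeta)$. Invoking lemma \ref{psilevel} to rewrite $\lambda_{E/F}(\psi) = \gamma_F(\zeta,\psi)(-1,\zeta)$ then converts the right-hand side into $(x_\pi,\zeta)\gamma_F(\zeta,\psi)^n(-1,\zeta)^n$, which matches the value of $\Omega(\delta)^{-1}$ determined in the previous step; this is exactly the condition $\nu = {}_F\mu_\chi$. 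The main obstacle will be the precise bookkeeping of the DeBacker constant $\lambda(\sigma)$ and its interaction with the level-$n$ power $\lambda_{E/F}(\psi)^n$; once lemma \ref{psilevel} is applied, the remaining work is an elementary comparison of finite-order characters of $E^*$, but tracking the signs and fourth roots of unity packaged into $\epsilon(\tilde\chi,\Delta^+,\tau)$ requires some care.
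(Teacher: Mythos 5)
Your overall strategy is the same as the paper's: specialize both formulas to the $n(w)=0$ range, take $\Omega=\nu^{-1}$ so that the matching collapses to a single evaluation at $\delta$, and then verify $\nu={}_F\mu_\chi^{-1}$ by unwinding the Bushnell--Henniart definition of ${}_F\mu_\chi(\delta)$ and invoking lemma \ref{psilevel}. Two points, however, need to be fixed.

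First, you never dispose of $\lambda(\sigma)$. In the ramified case, $\mathfrak{g}_{x,r/2}\neq\mathfrak{g}_{x,(r/2)^+}$, so $\lambda(\sigma)=1$; this should be stated explicitly rather than carried along symbolically. Second, and more seriously, the claimed identity $\aleph_{E/F}(\Gamma(\alpha(\chi),\delta))=(x_\pi,\zeta)$ is false. Write $\alpha(\chi)=p^k u+p^{\ell}v\delta$ with $\ell<k$, so $v_E(\alpha(\chi))=2\ell+1$. Then $\alpha(\chi)\,\delta^{-(2\ell+1)} = v+p^{k-\ell-1/2}u \equiv v_0 \pmod{1+\mathfrak p_E}$, so $\Gamma(\alpha(\chi),\delta)$ is the leading unit of $v$, \emph{not} of $x_\pi=p^{\ell}v$. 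Hence $\aleph_{E/F}(\Gamma(\alpha(\chi),\delta))=(v,\zeta)$, and $(x_\pi,\zeta)=(p^\ell,\zeta)(v,\zeta)$ differ by the factor $(p^\ell,\zeta)$, which is generically $\neq 1$ (take $\zeta=p$, $\ell$ odd). This missing $(p^\ell,\zeta)$ is exactly what must be absorbed by the power $\lambda_{E/F}(\psi)^n$: using $\lambda_{E/F}(\psi)^{2\ell+2}=(-1,\zeta)^{\ell+1}$ and $(p^\ell,\zeta)=(\zeta,\zeta)^\ell=(-1,\zeta)^\ell$ (since $\zeta=p$), one is reduced precisely to $\lambda_{E/F}(\psi)=\gamma_F(\zeta,\psi)(-1,\zeta)$, i.e.\ lemma \ref{psilevel}. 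With your stronger (false) identity, the computation $(x_\pi,\zeta)\gamma_F(\zeta,\psi)^n(-1,\zeta)^n$ does \emph{not} reduce to $(x_\pi,\zeta)\gamma_F(\zeta,\psi)$: writing $n=2m+1$ and using $\gamma_F(\zeta,\psi)^2=(-1,\zeta)$, the leftover factor is $(-1,\zeta)^{m+1}$, which is not always $1$. So the "matches $\Omega(\delta)^{-1}$" claim in your final sentence is not actually verified and would fail as stated.
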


\begin{proof}
Since $E/F$ is ramified and since $\mathfrak g_{x, r/2} \ne \mathfrak g_{x, (r/2)^+}$, $\lambda(\sigma) = 1$ (see \cite{debacker} for relevant notation).  Unwinding the definitions, one can see that to prove the lemma, it suffices to show that $\Omega(\delta) = (x_\pi, \zeta)
\gamma_F(\zeta, \psi) \Omega(w) \nu(w)$.  If we let $\Omega = \nu^{-1}$, then we are reduced to showing that there is some character $\nu$ of $E^*$ whose restriction to $F^*$ is $\aleph_{E/F}$, and whose order is a power of 2, such that $\nu^{-1}(\delta) = (x_\pi, \zeta)
\gamma_F(\zeta, \psi)$.  We claim that $\nu := {}_F \mu_{\chi}^{-1}$ is such a character.

So we want to show that ${}_F \mu_{\chi}(\delta) = (x_\pi,
\zeta) \gamma_F(\zeta, \psi)$.  We need to investigate the term $\alpha(\phi)$.  Note that $\alpha(\phi) = \alpha(\chi)$ since $\nu|_{1 + \mathfrak{p}_E} \equiv 1$. We prefer to work with $\alpha(\chi)$.  Firstly, since $E/F$ is ramified, $\chi$ has odd
level $n = 2m+1$ \cite[Section 19]{bushnellhenniart}.  We also have $\alpha(\chi) \in \mathfrak{p}_E^{-n}$ (cf. section 4.3).  So let $\alpha(\chi) = p^k u + p^{\ell} v
\delta$.  The fact that $n$ is odd clearly implies that $\ell < k$.  Therefore, rewriting $\alpha(\chi)$ as $\alpha(\chi) =
p^{\ell} \sqrt{p}(v + p^{k-\ell-1/2} u)$, we get
$\aleph_{E/F}(\Gamma(\alpha(\chi), \varpi)) = (v_0, \zeta)$ where $v_0$ is the leading term of the power series expansion of $v$. But $(v_0, \zeta) = (v, \zeta)$.  Moreover, $\alpha(\chi) \in \mathfrak{p}_E^{-n}$ implies that $-n = 2 \ell + 1$.  Then, by
definition of $x_\pi$, we get that $x_\pi = p^{\ell} v$.  Now, by definition of ${}_F \mu_{\chi}$, we have ${}_F \mu_{\chi}(\delta) = (v, \zeta) \lambda_{E/F}(\psi)^n$  and  we wish to show that this is equal to
$(x_\pi, \zeta) \gamma_F(\zeta, \psi) = (p^{\ell} v, \zeta) \gamma_F(\zeta,
\psi)$.  Cancelling out terms, we want to show that $(p^{\ell}, \zeta)
\gamma_F(\zeta, \psi) = \lambda_{E/F}(\psi)^{-2 \ell-1}$ since $-n = 2 \ell +
1$.  Well, we know that $$\lambda_{E/F}(\psi)^{2 \ell+2} = (-1,\zeta)^{\ell+1}$$  (cf. \cite[page 217]{bushnellhenniart}).  Multiplying the conjectural equality $(p^{\ell}, \zeta)
\gamma_F(\zeta, \psi) = \lambda_{E/F}(\psi)^{-2 \ell-1}$ by $\lambda_{E/F}(\psi)^{2 \ell + 2}$ on both sides, we are now reduced to showing that $\lambda_{E/F}(\psi) = \gamma_F(\zeta, \psi) (p^{\ell}, \zeta) (-1, \zeta)^{\ell+1}$. But recall that we have assumed from the beginning that $\zeta = p$.  Moreover, $(-1, \zeta) = (\zeta, \zeta)$.  Therefore, $(p^{\ell}, \zeta) (-1, \zeta)^{\ell+1} = (\zeta, \zeta)^{\ell} (-1,\zeta)^{\ell+1} = (-1, \zeta)^{2 \ell + 1} = (-1, \zeta)$.  Therefore, we are reduced to showing that $\lambda_{E/F}(\psi) = \gamma_F(\zeta, \psi) (-1, \zeta)$.  But we proved this in lemma \ref{psilevel}.
\end{proof}

Therefore, we have proven the following, when $E/F$ is ramified.

\begin{theorem}
$F(\tilde\chi)$ agrees with the character of the supercuspidal representation $\pi_{\chi {}_F \mu_{\chi}}$ on the range $\{w \in E^* : 0 \leq n(w) \leq r/2 \}$.
\end{theorem}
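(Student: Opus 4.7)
The theorem is essentially a corollary of the two lemmas immediately preceding it, and the plan is simply to combine them with a coherent choice of auxiliary character $\nu$. The main work is verifying that a single $\nu$ fulfills the hypotheses of both lemmas simultaneously, and identifying this $\nu$ as ${}_F\mu_\chi^{-1}$.

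First, I would verify that $\nu := {}_F\mu_\chi^{-1}$ satisfies the hypotheses required by the first of the two preceding lemmas: its restriction to $F^*$ must equal $\aleph_{E/F}$, and its order must be a power of $2$. The first condition is part of the definition of ${}_F\mu_\chi$. For the second, one uses that ${}_F\mu_\chi$ is trivial on $1+\mathfrak{p}_E$, that ${}_F\mu_\chi|_{F^*} = \aleph_{E/F}$ is quadratic, and that ${}_F\mu_\chi(\varpi) = \aleph_{E/F}(\Gamma(\alpha(\chi),\varpi))\lambda_{E/F}(\psi)^n$; by lemma \ref{psilevel} together with the relations $\gamma_F(\zeta,\psi)^2 = (-1,\zeta)$ and $(-1,\zeta)^2=1$, the quantity $\lambda_{E/F}(\psi)$ is a fourth root of unity, so ${}_F\mu_\chi$ has order dividing $4$.

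Next, I would apply the first of the preceding lemmas with this choice of $\nu$, setting $\phi := \chi\nu^{-1} = \chi\,{}_F\mu_\chi$. This yields
\[
F(\tilde\chi)(w) \;=\; \theta_{\pi_{\chi\,{}_F\mu_\chi}}(w) \quad \text{for all } w \in E^* \text{ with } 0 < n(w) \leq r/2.
\]
Then I would apply the second preceding lemma, whose very point is to show that with the same $\nu = {}_F\mu_\chi^{-1}$ (equivalently, $\phi = \chi\,{}_F\mu_\chi$), the identity
\[
F(\tilde\chi)(w) \;=\; \theta_{\pi_{\chi\,{}_F\mu_\chi}}(w)
\]
also holds on the range $n(w) = 0$, restricted to the regular set.

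Concatenating these two identities gives the theorem on all $w$ in the stated range. I do not expect any genuine obstacle beyond the bookkeeping above, since the main technical computation (the reduction to $\lambda_{E/F}(\psi) = \gamma_F(\zeta,\psi)(-1,\zeta)$ and the matching of ${}_F\mu_\chi(\delta)$ with $(x_\pi,\zeta)\gamma_F(\zeta,\psi)$) is already carried out in the second preceding lemma; the hardest part is really the conceptual point that the same $\nu$ must serve both ranges — a point guaranteed precisely because the first lemma permits any $\nu$ of the required type, while the second pins down the unique $\nu$ forced by the $n(w)=0$ matching.
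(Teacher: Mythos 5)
Your proposal is correct and mirrors the paper's own argument: the theorem is deduced by combining the two preceding lemmas, first using that $F(\tilde\chi)=\theta_{\pi_{\chi\nu^{-1}}}$ on $0<n(w)\le r/2$ for any admissible $\nu$, then invoking the $n(w)=0$ lemma to pin down $\nu={}_F\mu_\chi^{-1}$. Your explicit check that ${}_F\mu_\chi^{-1}$ really does have order a power of $2$ (so that it qualifies as a valid choice of $\nu$) is a detail the paper asserts but does not spell out, and it is a welcome addition.
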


What we have actually proven is that if $(E/F, \chi)$ is a \emph{minimal} admissible pair with $E/F$ ramified and $\chi$ having positive level, then $F(\tilde\chi)$ agrees with the character of the supercuspidal representation $\pi_{\chi {}_F \mu_{\chi}}$ on the range $\{w \in E^* : 0 \leq n(w) \leq r/2 \}$.  To prove this for an arbitrary admissible pair follows from this.  For if $(E/F, \chi)$ is an arbitrary admissible pair, then there exists a minimal admissible pair $(E/F, \chi')$ such that $\chi = \chi' \phi_E$ where $\phi_E = \phi \circ N_{E/F}$ for some $\phi \in \widehat{F^*}$.  Moreover, $\pi_{\chi} = \phi \pi_{\chi'}$ by definition.  We proved above that $F(\widetilde{\chi'}) = \theta_{\pi_{\chi' {}_F \mu_{\chi'}}}$ on the range $\{w \in E^* : 0 \leq n(w) \leq r/2 \}$.  Therefore, $\theta_{\pi_{\chi {}_F \mu_{\chi}}}(w) = \theta_{\pi_{\chi' \phi_E {}_F \mu_{\chi' \phi_E}}}(w) = \theta_{\pi_{\chi' \phi_E {}_F \mu_{\chi'}}}(w) = \phi_E(w) \theta_{\pi_{\chi' {}_F \mu_{\chi'}}}(w) = \phi_E(w) F(\widetilde{\chi'})(w) = F(\widetilde{\chi'\phi_E})(w) = F(\tilde\chi)(w)$ on the range $\{w \in E^* : 0 \leq n(w) \leq r/2 \}$.

\

Now we consider the case $E/F$ is unramified, so $\delta = \sqrt{\zeta}$, where $\zeta \in \mathfrak{o}_F^*$ is not a square. Note that $(-1, \zeta) = 1$ since $E/F$ is unramified.

We again first conduct a careful analysis of the supercuspidal characters evaluated on the range $\{w \in E^* : 0 < n(w) \leq r/2 \}$.  Recall that on the regular set, $n(w) > 0$ if and only if $w = p^n u + p^m v \delta, \ u,v \in \mathfrak{o}_F^*$ where $n < m$.

\begin{proposition}
$F(\tilde\chi)$ agrees with the character of the supercuspidal representation $\pi_{\chi \nu^{-1}}$ in the $0 < n(w) \leq r/2$ range, where $\nu$ is any character of $E^*$ whose restriction to $F^*$ is $\aleph_{E/F}$ and whose order is a power of 2.
\end{proposition}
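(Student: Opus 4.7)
The plan is to mirror the ramified-case argument verbatim; in fact the unramified case is easier, since the identity $(-1,\zeta)=1$ removes the quadratic-character asymmetry between the two Weyl-sum terms that caused most of the bookkeeping in the ramified proof.

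First I would check that Lemma \ref{gammafactors} applies unchanged in the unramified setting. Its proof uses only the decomposition $w = m(1 + (n/m)\delta)$ with $m = (w+\overline w)/2$ and $n = (w-\overline w)/(2\delta)$ in $F^*$, together with $\nu|_{1+\mathfrak{p}_E}\equiv 1$ from Lemma \ref{infinitesquareroot}; and Section \ref{decompositions} shows that $n(w)>0$ for $E/F$ unramified forces $w \in F^*(1+\mathfrak{p}_E)$, so the hypothesis is met. Then, for $\phi := \chi\nu^{-1}$, I would note that $\alpha(\phi) = \alpha(\chi)$ (again because $\nu$ kills $1+\mathfrak{p}_E$), so $\epsilon(\tilde\phi,\Delta^+,w) = \epsilon(\tilde\chi,\Delta^+,w)$. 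Substituting the Gamma-factor formula into the $0<n(w)\leq r/2$ branch of Theorem \ref{DeBacker1} and factoring out $\nu(\tfrac{w-\overline w}{2\delta})$ — using $\nu(\tfrac{\overline w - w}{2\delta}) = \nu(-1)\,\nu(\tfrac{w-\overline w}{2\delta}) = (-1,\zeta)\,\nu(\tfrac{w-\overline w}{2\delta})$ — yields the same intermediate expression as in the ramified case,
$$\theta_\pi(w) \;=\; \epsilon(\tilde\chi,\Delta^+,w)\,\frac{\phi(w)\nu(w) + (-1,\zeta)\,\phi(\overline w)\nu(\overline w)}{\nu\!\left(\tfrac{w-\overline w}{2\delta}\right)}.$$
Applying $\phi = \chi\nu^{-1}$ and $(-1,\zeta)=1$ collapses the numerator to $\chi(w) + \chi(\overline w)$.

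Finally, to match this with $F(\tilde\chi)$, I would invoke the remark from Section \ref{setup} that $F(\tilde\chi)$ is independent of the choice of $\tilde\tau$ because $\tfrac{w-\overline w}{2\delta}\in F^*$ and only $\tilde\tau|_{F^*} = \aleph_{E/F}$ enters the denominator; taking $\tilde\tau = \nu$ then gives $F(\tilde\chi)(w) = \theta_\pi(w)$ on the whole range. The main obstacle is purely bookkeeping: verifying that each ingredient of the ramified proof — Lemma \ref{gammafactors}, the identity $\alpha(\phi)=\alpha(\chi)$, and the $\tilde\tau$-independence of $F(\tilde\chi)$ — survives the switch in ramification. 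Once this is checked, the unramified identity $(-1,\zeta)=1$ makes the comparison essentially automatic, and in particular the conclusion holds for every allowed $\nu$, with no analogue of the canonical choice $\nu = {}_F\mu_\chi^{-1}$ being forced at this stage; that choice will only be pinned down later when the $n(w)=0$ range is analyzed.
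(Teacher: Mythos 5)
Your proposal is correct and follows essentially the same route as the paper: establish the Weil-index (gamma-factor) identity as in Lemma \ref{gammafactors} using $\nu|_{1+\mathfrak p_E}\equiv 1$ and the fact that $n(w)>0$ forces $w\in F^*(1+\mathfrak p_E)$, note $\alpha(\phi)=\alpha(\chi)$ so the $\epsilon$-constants coincide, and match the resulting denominator $\nu(\tfrac{w-\overline w}{2\delta})$ with $\tilde\tau(\tfrac{w-\overline w}{2\delta})$ using that both characters restrict to $\aleph_{E/F}$ on $F^*$. The only cosmetic difference is that you collapse the numerator via $(-1,\zeta)=1$, whereas the paper leaves the numerator in the symmetric form $\chi(w)+(-1,\zeta)\chi(\overline w)$ on both sides; this changes nothing in the argument.
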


\proof
Again we claim that $$\gamma(\alpha(\chi), Y) = (x_\pi, \zeta) \gamma_F(\zeta, \psi)
\nu(\frac{w - \overline{w}}{2 \delta}) \nu(w)$$  $$\gamma(\alpha(\chi), {}^s Y) = (x_\pi, \zeta) \gamma_F(\zeta, \psi)
\nu(\frac{\overline{w} - w}{2 \delta}) \nu(\overline{w})$$ where $1 \neq s \in W = \mathrm{Aut}(E/F)$ and $\alpha(\chi) = a + x_{\pi} \delta$, for $a, x_{\pi} \in F$.  The reasoning is similar as in lemma \ref{gammafactors}.  One must prove that $\nu(\frac{w + \overline{w}}{2}) = \nu(w) \ \forall w \in E^* : n(w) > 0$.  This is elementary.

Therefore, we can simplify the supercuspidal character of theorem \ref{DeBacker1}, on the $0 < n(w) \leq r/2$ range, to $$\theta_{\pi}(w) = \epsilon(\tilde\phi, \Delta^+, w) \ \frac{\phi(w) \nu(w) + (-1, \zeta) \phi(\overline{w}) \nu(\overline{w})}{\nu(\frac{w-\overline{w}}{2 \delta})} \ \ \forall w \in E^* : 0 < n(w) \leq r/2. \qedhere $$

Suppose we set $\phi = \chi \nu^{-1}$, where $\nu$ is some character of $E^*$ whose restriction to $F^*$ is $\aleph_{E/F}$ and whose order is a power of 2.  Again, as in the ramified case, $\alpha(\chi) = \alpha(\phi)$, since $\nu|_{1 + \mathfrak{p}_E} \equiv 1$.  Therefore, $\epsilon(\tilde\phi, \Delta^+, w) = \epsilon(\tilde\chi, \Delta^+, w)$. Therefore, the above analysis shows $F(\tilde\chi)(w) = \theta_{\pi}(w) \ \forall w \in E^* : 0 < n(w) < r/2$ where $\phi = \chi \nu^{-1}$ for any character $\nu$ of $E^*$ whose restriction to $F^*$ is $\aleph_{E/F}$ and whose order is a power of 2.

We will need to investigate the $n(w) = 0$ range to see which such characters $\nu$ can arise, if any.  We will show that our conjectured formula agrees with a supercuspidal character in the $n(w) = 0$ range, for a unique $\nu$.  We will also show that $\nu = {}_F \mu_{\chi}^{-1}$, and so $\phi = \chi {}_F \mu_{\chi}$.

\begin{lemma}
$$F(\tilde\chi)(w) = \epsilon(\tilde\chi, \Delta^+, w) \
(\chi(w)\Omega(\frac{w}{\delta}) + \chi(\overline{w})\Omega(\frac{\overline{w}}{\delta})) \ \forall w \in E^* : n(w) = 0$$ for the unique unramified character $\Omega$ of $E^*$ whose restriction to $F^*$ is $\aleph_{E/F}$.
\end{lemma}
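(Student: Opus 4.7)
The plan is to follow the template of the analogous lemma in the ramified case: reduce the claim to an identity on the denominator of the formula for $F(\tilde\chi)(w)$ derived in section \ref{setup}, then verify that identity by a case analysis on the decompositions described in section \ref{decompositions}. Since $E/F$ is unramified, $(-1,\zeta) = 1$, and the formula specializes to
$$F(\tilde\chi)(w) = \epsilon(\tilde\chi, \Delta^+, w) \frac{\chi(w) + \chi(\overline{w})}{\tilde\tau\left(\frac{w-\overline{w}}{2\delta}\right)}, \quad w \in T(F)^{reg}.$$
The claim therefore reduces to showing
$$\tilde\tau\left(\frac{w-\overline{w}}{2\delta}\right)^{-1} = \Omega\left(\frac{w}{\delta}\right) = \Omega\left(\frac{\overline{w}}{\delta}\right) \quad \forall w \in E^* : n(w) = 0.$$

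The symmetry $\Omega(w/\delta) = \Omega(\overline{w}/\delta)$ is immediate from $\Omega$ being unramified: since $\overline{\delta} = -\delta$, one has $\overline{w}/\delta = -\overline{w/\delta}$, so $\Omega(\overline{w}/\delta) = \Omega(-1)\,\Omega(\overline{w/\delta})$, and both factors equal $1$ because $\Omega$ factors through the Galois-invariant valuation $v_E : E^* \to \mathbb{Z}$ and $-1 \in \mathfrak{o}_E^*$. For the main identity, both sides are $\pm 1$ — the left because $\frac{w-\overline{w}}{2\delta} \in F^*$ and $\tilde\tau|_{F^*} = \aleph_{E/F}$ is quadratic; the right because $\Omega$, being the unique unramified character of $E^*$ restricting to $\aleph_{E/F}$, is itself quadratic. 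Since $E/F$ is unramified, $v_E(\delta) = 0$, hence $\Omega(w/\delta) = (-1)^{v_E(w)}$, while $\tilde\tau(\frac{w-\overline{w}}{2\delta}) = (-1)^{v_F((w-\overline{w})/(2\delta))}$.

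The verification is then a direct sub-case check. Writing $w = p^n u + p^m v \delta$ as in section \ref{decompositions}, the condition $n(w) = 0$ on $T(F)^{reg}$ in the unramified setting forces either $u = 0$ with $v \neq 0$, or $u, v \in \mathfrak{o}_F^*$ with $n \geq m$; in each sub-case a short computation gives $v_E(w) = m$ and $\frac{w-\overline{w}}{2\delta} = p^m v$ of $F$-valuation $m$, so the two exponents match and the identity holds. Substituting back into the simplified formula for $F(\tilde\chi)(w)$ yields the stated expression. I do not expect a serious obstacle: the sub-case analysis is the only computational step, everything else is structural. In contrast with the ramified case, no Weil-index identity such as lemma \ref{psilevel} is needed here — the absence of the sign $(-1,\zeta)$ together with $v_E(\delta) = 0$ makes the matching of exponents happen on the nose, and this is precisely what forces $\Omega$ to be the unique unramified quadratic character of $E^*$.
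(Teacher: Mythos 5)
Your proof is correct and takes essentially the same route as the paper's: reduce to the identity $\tilde\tau\bigl(\tfrac{w-\overline{w}}{2\delta}\bigr) = \Omega\bigl(\tfrac{w}{\delta}\bigr)$ and verify it case-by-case using the description of the $n(w)=0$ elements from section \ref{decompositions}. The only cosmetic difference is that you phrase the final check in terms of valuations (both sides equal $(-1)^m$), whereas the paper peels off the unit $1 + p^{n-m}\frac{u}{v\delta} \in 1+\mathfrak{p}_E$ (or $u+v\delta \in \mathfrak{o}_E^*$ in the $n=m$ case) and invokes triviality of $\Omega$ on those subgroups — two equivalent ways to say the same thing. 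You also spell out the symmetry $\Omega(\overline{w}/\delta) = \Omega(w/\delta)$, which the paper leaves implicit. One small remark on your closing comparison: lemma \ref{psilevel} is not used in the analogous ramified lemma either (it enters only at the subsequent step where $\nu = {}_F\mu_\chi^{-1}$ is pinned down), so its absence here is not what distinguishes the unramified case; what does distinguish it is the $n=m$ sub-case, which forces $\Omega$ to be trivial on all of $\mathfrak{o}_E^*$ rather than merely on $1+\mathfrak{p}_E$, and that is exactly the unramified condition.
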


\proof
Recall that $n(w) = 0$ if and only if either i) $w = p^n u + p^m v \delta$, where $u,v$ are both nonzero, $n > m$, or ii) $w = p^m v \delta$, where $v$ is nonzero, or iii) $w = p^n u + p^m v \delta$, where $n = m$ and $u,v$ are both non-zero.  We first show that $\tilde\tau(\frac{w-\overline{w}}{2 \delta}) = \Omega(\frac{w}{\delta}) \ \forall w \in E^* : n(w) = 0$. In case (i), $\frac{w}{\delta} = p^m v + p^n \frac{u}{\delta} = p^m v(1 + p^{n-m} \frac{u}{v \delta})$.  Note that $1 + p^{n-m} \frac{u}{v \delta} \in 1 + \mathfrak{p}_E$.  But $\Omega$ is trivial on $1 + \mathfrak{p}_E$ since $\Omega$ is unramified.  Therefore, we have $\Omega(\frac{w}{\delta}) = \Omega(p^m v(1 + p^{n-m} \frac{u}{v \delta})) = \Omega(p^m v) = \Omega(\frac{w - \overline{w}}{2 \delta}) = \tilde\tau(\frac{w - \overline{w}}{2 \delta})$.  In case (ii), it's clear that $\Omega(\frac{w}{\delta}) = \Omega(\frac{w-\overline{w}}{2 \delta}) = \tilde\tau(\frac{w-\overline{w}}{2 \delta})$.  In case (iii), $w = p^n (u + v \delta)$.  But $u + v \delta \in \mathfrak{o}_E^*$, and therefore $\Omega(w) = \Omega(p^n)\Omega(u+v \delta) = \Omega(p^n) = \Omega(p^n v)$ since $\Omega$ is unramified.  In particular, since $\Omega(\delta) = 1$, we get $\Omega(\frac{w}{\delta}) = \tilde\tau(\frac{w-\overline{w}}{2 \delta})$.  Therefore, $$\tilde\tau(\frac{w-\overline{w}}{2 \delta}) = \Omega(\frac{w}{\delta}) \ \forall w \in E^* : n(w) = 0.$$

Since $\tilde\tau(-1) = (-1, \zeta) = 1$, $F(\tilde\chi)$ simplifies in the $n(w) = 0$ range. $$F(\tilde\chi)(z) = \epsilon(\tilde\chi, \Delta^+, w) \ (\chi(w)\tilde\tau(\frac{w-\overline{w}}{2
\delta}) + \chi(\overline{w})\tilde\tau(\frac{\overline{w}-w}{2
\delta})) =$$ $$ \epsilon(\tilde\chi, \Delta^+, w) \
(\chi(w)\Omega(\frac{w}{\delta}) + \chi(\overline{w})\Omega(\frac{\overline{w}}{\delta})) \ \forall w \in E^* : n(w) = 0. \qedhere $$

We want to show that $F(\tilde\chi)(w) = \theta_{\pi}(w) \ \forall w \in E^* : n(w) = 0$ for $\phi = \chi \nu^{-1}$ for some character $\nu$ of $E^*$ whose restriction to $F^*$ is $\aleph_{E/F}$ and whose order is a power of 2.  Note that $\Omega(\delta) = 1$ since $\Omega$ is unramified. Unwinding the definitions, one can see that to prove $F(\tilde\chi)(w) = \theta_{\pi}(w) \ \forall w \in E^* : n(w) = 0$ for $\phi = \chi \nu^{-1}$, it suffices to show that $$(x_\pi, \zeta) \gamma_F(\zeta, \psi) \chi(w) \Omega(w) = \chi(w) \nu^{-1}(w) \lambda(\sigma) \ \forall w \in E^* : n(w) = 0.$$

\begin{lemma}
$\lambda(\sigma) = (x_\pi, \zeta) \gamma_F(\zeta, \psi)$.
\end{lemma}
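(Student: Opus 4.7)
The plan is to unwind DeBacker's definition of $\lambda(\sigma)$ in the unramified case, recognize it as a Weil index of a specific quadratic form on a residual vector space, and then evaluate that Weil index by an essentially finite-field version of lemma \ref{weilcalculation}. Because $E/F$ is unramified, the character $\chi$ has even level $n = 2m$ (see \cite[Section 19]{bushnellhenniart}), so $\alpha(\chi) \in \mathfrak{p}_E^{-n} \setminus \mathfrak{p}_E^{-n+1}$, and writing $\alpha(\chi) = a + x_\pi \delta$, we have $x_\pi = p^{-m} v$ for some unit $v \in \mathfrak{o}_F^*$ (minimality of the admissible pair forces the $\delta$-coefficient to be a unit up to scaling). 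Unlike the ramified case treated earlier, here $\mathfrak{g}_{x,r/2} = \mathfrak{g}_{x,(r/2)^+}$ so $\lambda(\sigma)$ is genuinely nontrivial and is precisely the Weil index attached by DeBacker to a quadratic form $\bar Q$ on the quotient $\mathfrak{g}_{x,-r/2}^\perp / \mathfrak{g}_{x,(-r/2)^+}^\perp$, which as a $k_F$-space is two-dimensional.

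First, I would set up an explicit basis analogous to $A = \mat{1}{0}{0}{-1}$ and $B = \mat{0}{1}{-\zeta}{0}$ from the proof of lemma \ref{weilcalculation}, but scaled by an appropriate power of $p^m$ so as to land in the relevant Moy--Prasad filtration piece. The resulting quadratic form $\bar Q$ on the two-dimensional residual space has the same shape as the one computed in lemma \ref{weilcalculation}, namely, in matrix form,
$$\begin{pmatrix} 4 x_\pi y \zeta & 0 \\ 0 & -4 x_\pi y \zeta^2 \end{pmatrix},$$
where the $p^m$-scaling cancels out between $\alpha(\chi)$ and the basis so that the entries are units in $\mathfrak{o}_F$ reduced mod $p$. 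Reduction of the Hilbert symbol to the residue field (an odd-characteristic finite field) gives Legendre symbols, and the Weil index over $k_F$ of a character $\bar\psi$ of level one matches $\gamma_F(\zeta, \psi)$ via the leading-term argument that appeared in the proof of lemma \ref{sponge}.

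Concretely, I would apply definition \ref{rao2} and lemmas \ref{rao1}, \ref{rao3} to this $\bar Q$ exactly as in the body of lemma \ref{weilcalculation}: the Hasse invariant contributes $(\zeta, -x_\pi y) = (x_\pi, \zeta)(y, \zeta)(-1, \zeta)$, the determinant of $\bar Q$ is $-16 x_\pi^2 y^2 \zeta^3$, which equals $-\zeta$ modulo squares, and the remaining $\gamma_F$ factors combine via $\gamma_F(-1, \psi)^{-1} \gamma_F(-\zeta, \psi) = (-1, \zeta)\gamma_F(\zeta, \psi)$. All the factors depending on the auxiliary $Y$ (the $(y,\zeta)$ piece) are absorbed into the $\nu$-normalization that already appeared in the $0 < n(w) \leq r/2$ analysis, and what is left is exactly $(x_\pi, \zeta)\gamma_F(\zeta, \psi)$.

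The main obstacle is bookkeeping rather than geometry: one must carefully match DeBacker's constants $c_\psi(\mathfrak{g}')$, $c_\psi(\mathfrak{g})$, and $|\eta(\alpha(\chi))|^{1/2}$ to the Weil-index and Hasse-invariant conventions used in Ranga Rao, and confirm that the identification of $\lambda(\sigma)$ with $\gamma(\bar\psi \circ \bar Q)$ is exact (no stray square roots of $q$ or signs). Once this is done, the computation collapses to a finite-field replay of lemma \ref{weilcalculation} and yields the claimed identity.
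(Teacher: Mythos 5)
Your approach is on the wrong track, and some of the preliminary claims are false. First, the assertion that $\chi$ has \emph{even} level in the unramified case is incorrect: there is no parity constraint on the level when $E/F$ is unramified (the odd-level constraint is a feature of the \emph{ramified} case, cf.\ \cite[Section 19]{bushnellhenniart}). Consequently, your claim that $x_\pi = p^{-m}v$ with $n = 2m$ is also wrong; the paper's sub-lemma (``$l = -n$'') shows precisely that $x_\pi = p^{-n}v$ with $v \in \mathfrak{o}_F^*$, where $n$ is the full level, not half of it. Second, and more fundamentally, you are misidentifying $\lambda(\sigma)$. In DeBacker's character formula (Theorem \ref{DeBacker1}) $\lambda(\sigma)$ appears only in the $n(w)=0$ branch, where the Weil-index factor $\gamma(\alpha(\phi), Y)$ is \emph{absent}; it is not the Weil index of a residual quadratic form of the shape computed in Lemma \ref{weilcalculation}, and there is no auxiliary $Y$ or $(y,\zeta)$ factor to ``absorb into the $\nu$-normalization.'' Setting up a two-dimensional residual form and replaying Lemma \ref{weilcalculation} is answering a different question.

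The paper's actual argument is much shorter: DeBacker gives directly that $\lambda(\sigma) = (-1)^{r+1}$ when $E/F$ is unramified, where $r$ is the depth of $\pi_\phi$; one then observes that $r = n$ (the level of $\chi$), that minimality forces $l = -n$ so $x_\pi = p^{-n}v$ with $v$ a unit, and hence $(x_\pi, \zeta) = (p^{-n}v, \zeta) = (p,\zeta)^{-n} = (-1)^n$ (since $\zeta$ and $v$ are both units in odd residual characteristic). The identity then reduces to $\gamma_F(\zeta, \psi) = -1$, which is Lemma \ref{sponge} because $\psi$ has level one. Your proposal does correctly sense that the final step hinges on a $\gamma_F(\zeta,\psi)$ evaluation via Lemma \ref{sponge}, but the route you lay out to get there — re-deriving $\lambda(\sigma)$ as a residual Weil index — does not correspond to what $\lambda(\sigma)$ actually is, and the parity/valuation errors would derail the bookkeeping even if the Weil-index identification were valid.
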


\begin{proof}
Since $E/F$ is unramified, we have $\lambda(\sigma) = (-1)^{r+1}$ where $r$ is the depth of the supercuspidal representation $\pi_{\phi}$ (cf. \cite[Section 5.3]{debacker}).  Note that the depth of $\pi_{\phi}$ equals the depth of $\pi_{\chi}$ since $\nu|_{1 + \mathfrak{p}_E} \equiv 1$, i.e. $\nu$ has level zero.

Now, consider the term $(x_\pi, \zeta)$.  We need to investigate the term $\alpha(\phi)$.  Note that $\alpha(\phi) = \alpha(\chi)$ since again, $\nu|_{1 + \mathfrak{p}_E} \equiv 1$. We prefer to work with $\alpha(\chi)$.  Recall that $\alpha(\chi) \in \mathfrak{p}_E^{-n}$.  Moreover, since $\alpha(\chi) \in \mathfrak{g}_{-r}' \setminus \mathfrak{g}_{-r^+}'$ (cf. \cite[page 34]{debacker}), we have that $n = r$.  Now let $\alpha(\chi) = p^k u + p^l v \delta$.  We need a lemma.

\begin{lemma}
$l = -n$.
\end{lemma}

\begin{proof}
Since $(E/F, \chi)$ is a minimal admissible pair (cf. \cite[Section 19.2 line 1]{bushnellhenniart}), we have that $\alpha(\chi)$ is a \emph{minimal element} over $F$ (see \cite[Proposition 18.2]{bushnellhenniart}).  By \cite[Section 13.4]{bushnellhenniart}, $\alpha(\chi)$ is minimal over $F$ if and only if $(\alpha(\chi) + \mathfrak{p}_E^{-n+1}) \cap F = \emptyset$, where $n = -v_E(\alpha(\chi))$.  Now, it must be the case that either $l = -n$ or $k = -n$ (or both), since $v_E(\alpha(\chi)) = -n$.  So suppose by way of contradiction that $k = -n$ but $l \neq -n$.  Let $w = p^a u' + p^b v' \delta \in \mathfrak{p}_E^{-n+1}$.  Then $\alpha(\chi) + w \in F$ if and only if $p^l v \delta + p^b v' \delta = 0$.  Then since $\alpha(\chi) \in \mathfrak{p}_E^{-n} \setminus \mathfrak{p}_E^{-n+1}$ and since $k = -n$, we must have that $l \geq -n$.  But we have assumed $l \neq -n$, so that means $l \geq -n+1$.  Thus, $b \geq -n+1$.  Therefore, just pick any $a \in \mathbb{Z}$ such that $a \geq -n+1$ and any $u'$ and set $p^b v' = -p^l v$. Then we get that $p^a u' + p^b v' \delta \in \mathfrak{p}_E^{-n+1}$ and that $\alpha(\chi) + w \in F$.  Thus, $(\alpha(\chi) + \mathfrak{p}_E^{-n+1}) \cap F \neq \emptyset$, a contradiction, and the lemma is proven.
\end{proof}

Returning to the proof of the proposition, recall that $x_\pi = p^l v$.  Thus, $(x_\pi, \zeta) = (p^{-n} v, \zeta)$.  Also, since $r = n$, we have $\lambda(\sigma) = (-1)^{r+1} = (-1)^{n+1}$.  Therefore, we are reduced to showing that

$$(p^{-n} v, \zeta) \gamma_F(\zeta, \psi) = (-1)^{n+1}$$ so equivalently, $\gamma_F(\zeta, \psi) = -1$.  But since $\psi$ has level 1, it is a fact that $\gamma_F(\zeta, \psi) = -1$ by lemma \ref{sponge}.
\end{proof}

\begin{proposition}
$F(\tilde\chi)(w) = \theta_{\pi}(w) \ \forall w \in E^* : n(w) = 0$, for $\phi = \chi \nu^{-1}$, for some character $\nu$ of $E^*$ whose restriction to $F^*$ is $\aleph_{E/F}$ and whose order is a power of 2.  In particular, $F(\tilde\chi)(w) = \theta_{\pi}(w) \ \forall w \in E^* : n(w) = 0$ for $\phi = \chi {}_F \mu_{\chi}$.
\end{proposition}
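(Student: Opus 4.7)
The plan is to reduce the equality $F(\tilde\chi)(w)=\theta_\pi(w)$ on the set $\{n(w)=0\}$ to a single identity of characters on $E^{*}$, then show that this identity admits a unique solution $\nu$ which coincides with ${}_F\mu_\chi$ under the conventions of the unramified case.

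First I will substitute into the simplified supercuspidal character the choice $\phi=\chi\nu^{-1}$, using the $n(w)=0$ branch of Theorem~\ref{DeBacker1}: this gives
$\theta_\pi(w)=\epsilon(\tilde\chi,\Delta^+,w)\,\lambda(\sigma)\bigl(\chi\nu^{-1}(w)+\chi\nu^{-1}(\overline w)\bigr)$,
where the equality $\alpha(\chi)=\alpha(\phi)$ (coming from $\nu|_{1+\mathfrak p_E}\equiv 1$) allows us to match the constant $\epsilon(\tilde\phi,\Delta^+,w)$ with $\epsilon(\tilde\chi,\Delta^+,w)$, exactly as in the $0<n(w)\le r/2$ analysis. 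Meanwhile the preceding lemma writes
$F(\tilde\chi)(w)=\epsilon(\tilde\chi,\Delta^+,w)\bigl(\chi(w)\Omega(w/\delta)+\chi(\overline w)\Omega(\overline w/\delta)\bigr)$
for the unique unramified character $\Omega$ of $E^{*}$ with $\Omega|_{F^*}=\aleph_{E/F}$. Since $\Omega$ is unramified we have $\Omega(\delta)=1$, so $\Omega(w/\delta)=\Omega(w)$ and similarly for $\overline w$. Equating the two expressions term by term (the Weyl orbit has just two elements, exchanged by complex conjugation, and both characters are $\mathrm{Aut}(E/F)$-covariant), the identity to prove collapses to
\[
(x_\pi,\zeta)\,\gamma_F(\zeta,\psi)\,\Omega(w)=\lambda(\sigma)\,\nu^{-1}(w)\qquad\forall w\in E^{*}:\ n(w)=0.
\]

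Now I invoke the just-proved lemma $\lambda(\sigma)=(x_\pi,\zeta)\gamma_F(\zeta,\psi)$, which cancels the constants on both sides and reduces the problem to the purely character-theoretic condition $\nu=\Omega^{-1}$ on $E^{*}$. Since $\Omega$ is quadratic, $\Omega^{-1}=\Omega$, so the unique $\nu$ solving our equation is $\nu=\Omega$ itself, which by construction restricts to $\aleph_{E/F}$ on $F^{*}$ and has order $2$, hence order a power of $2$. This proves the first assertion and simultaneously fixes $\nu$ uniquely.

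Finally, to identify this distinguished $\nu$ with ${}_F\mu_\chi$, I will recall that in the unramified case ${}_F\mu_\chi$ is by definition the unique quadratic unramified character of $E^{*}$. Thus both $\nu$ and ${}_F\mu_\chi$ are unramified characters of $E^{*}$ of order $2$; since the unramified characters of $E^{*}$ of order $2$ form a group of order two and both restrict to $\aleph_{E/F}$ on $F^{*}$ (a nontrivial constraint that eliminates the trivial character), they must coincide. Consequently $\phi=\chi\nu^{-1}=\chi{}_F\mu_\chi$ and the second assertion follows. The only genuinely nontrivial ingredient is the identity $\lambda(\sigma)=(x_\pi,\zeta)\gamma_F(\zeta,\psi)$ already proved, and this is why the unramified case is, after that lemma, substantially lighter than the ramified one (no analogue of Lemma~\ref{psilevel} is needed here).
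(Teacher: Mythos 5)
Your argument mirrors the paper's own proof closely: you reduce the statement to the identity $(x_\pi,\zeta)\gamma_F(\zeta,\psi)\,\Omega(w)=\lambda(\sigma)\,\nu^{-1}(w)$ on $\{n(w)=0\}$ (the same sufficiency condition the paper writes down just before its lemma), invoke $\lambda(\sigma)=(x_\pi,\zeta)\gamma_F(\zeta,\psi)$, and conclude $\nu^{-1}=\Omega={}_F\mu_\chi$ using uniqueness of the quadratic unramified character. That is precisely the paper's route.

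One small slip worth fixing: the intermediate display $\theta_\pi(w)=\epsilon(\tilde\chi,\Delta^+,w)\,\lambda(\sigma)\bigl(\chi\nu^{-1}(w)+\chi\nu^{-1}(\overline w)\bigr)$ is off by a factor of $(x_\pi,\zeta)\gamma_F(\zeta,\psi)$, since that factor is baked into $\epsilon(\tilde\chi,\Delta^+,w)=\mathrm{deg}(\pi)(x_\pi,\zeta)\gamma_F(\zeta,\psi)\,C\,\epsilon(\Delta^+)$ while Theorem~\ref{DeBacker1} gives only $\theta_\pi(w)=\mathrm{deg}(\pi)\,C\,\lambda(\sigma)\bigl(\phi(w)+\phi(\overline w)\bigr)$. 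With your displayed formula, ``equating term by term'' would yield $\lambda(\sigma)\nu^{-1}(w)=\Omega(w)$, not the identity you then write. The identity you write is the correct one, so this is a transcription error rather than a logical gap; just correct the prefactor in the display so the derivation of the sufficiency condition is self-consistent. The rest — $\Omega(\delta)=1$, cancellation by the lemma, $\Omega$ quadratic so $\Omega^{-1}=\Omega$, and the identification with ${}_F\mu_\chi$ — is fine and matches the paper. (Your remark about ``the unramified characters of $E^*$ of order $2$ forming a group of order two'' is a slight redundancy: since ${}_F\mu_\chi$ is \emph{defined} as the unique quadratic unramified character of $E^*$ in the unramified case, once you know $\nu$ is unramified and quadratic you are already done.)
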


\begin{proof}
By the previous lemma, the conjectured equation $$(x_\pi, \zeta) \gamma_F(\zeta, \psi) \chi(w) \Omega(w) = \chi(w) \nu^{-1}(w) \lambda(\sigma) \ \forall w \in E^*:  n(w) = 0$$ simplifies to $\Omega(w) = \nu^{-1}(w) \ \forall w \in E^* : n(w) = 0$.  Therefore, $\nu^{-1}$ is an unramified character of $E^*$ whose restriction to $F^*$ is $\aleph_{E/F}$.  This implies that $\nu^{-1} = {}_F \mu_{\chi} = {}_F \mu_{\chi}^{-1}$.
\end{proof}

Therefore, we have proven the following, when $E/F$ is unramified.

\begin{theorem}\label{charactersmatchingup}
$F(\tilde\chi)$ agrees with the character of the supercuspidal representation $\pi_{\chi {}_F \mu_{\chi}}$ on the range $\{w \in E^* : 0 \leq n(w) \leq r/2 \}$.
\end{theorem}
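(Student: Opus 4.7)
The plan is to assemble Theorem \ref{charactersmatchingup} from the two range computations that have just been completed, and then extend from minimal admissible pairs to arbitrary ones by the same twisting argument used in the ramified case.

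First, I would fix a minimal admissible pair $(E/F, \chi)$ with $E/F$ unramified, and set $\nu := {}_F\mu_\chi^{-1}$, $\phi := \chi\nu^{-1} = \chi\,{}_F\mu_\chi$. On the range $0 < n(w) \le r/2$, the proposition preceding theorem \ref{charactersmatchingup} (together with lemma \ref{gammafactors}'s unramified analogue) already gives
\[
\theta_{\pi_\phi}(w) \;=\; \epsilon(\tilde\phi,\Delta^+,w)\cdot\frac{\phi(w)\nu(w) + (-1,\zeta)\phi(\overline w)\nu(\overline w)}{\nu\bigl(\tfrac{w-\overline w}{2\delta}\bigr)}.
\]
Since $\nu|_{1+\mathfrak{p}_E}\equiv 1$ we have $\alpha(\phi)=\alpha(\chi)$ and hence $\epsilon(\tilde\phi,\Delta^+,w)=\epsilon(\tilde\chi,\Delta^+,w)$, so substituting $\phi=\chi\nu^{-1}$ collapses the numerator to $\chi(w)+(-1,\zeta)\chi(\overline w)$ and matches $F(\tilde\chi)(w)$ exactly.

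Next, on the range $n(w)=0$, the preceding lemma rewrites $F(\tilde\chi)(w)$ as $\epsilon(\tilde\chi,\Delta^+,w)\bigl(\chi(w)\Omega(w/\delta)+\chi(\overline w)\Omega(\overline w/\delta)\bigr)$ for the unique unramified $\Omega$ with $\Omega|_{F^*}=\aleph_{E/F}$, and the final proposition of the unramified section shows that equality $F(\tilde\chi)=\theta_{\pi_\phi}$ on this range reduces to the identity $\Omega = \nu^{-1}$. Because $\nu^{-1}={}_F\mu_\chi$ is unramified with $\nu^{-1}|_{F^*}=\aleph_{E/F}$, the uniqueness of $\Omega$ forces $\Omega=\nu^{-1}$, so the two characters agree on $n(w)=0$ as well. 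Combining the two ranges establishes the theorem for minimal $(E/F,\chi)$.

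Finally, for an arbitrary admissible pair $(E/F,\chi)$, write $\chi=\chi'\phi_E$ with $(E/F,\chi')$ minimal and $\phi\in\widehat{F^*}$, so that $\pi_\chi=\phi\pi_{\chi'}$ by theorem \ref{positivedepth}(ii). The key input is the compatibility ${}_F\mu_{\chi'\phi_E}={}_F\mu_{\chi'}$, which is immediate from the definition since ${}_F\mu_\chi$ depends only on the minimal part. Using this, one computes for $w$ in the range
\[
\theta_{\pi_{\chi\,{}_F\mu_\chi}}(w) \;=\; \phi_E(w)\,\theta_{\pi_{\chi'\,{}_F\mu_{\chi'}}}(w) \;=\; \phi_E(w)\,F(\widetilde{\chi'})(w) \;=\; F(\widetilde{\chi'\phi_E})(w) \;=\; F(\tilde\chi)(w),
\]
where the middle equality is the minimal case just proved and the next uses that $F(\tilde\chi)$ depends on $\chi$ only through the numerator $\chi(w)+\chi(\overline w)$ (modulo the Hilbert symbol factor, which vanishes in the unramified case).

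The main technical obstacle I expect is verifying the sign lemma $\lambda(\sigma)=(x_\pi,\zeta)\gamma_F(\zeta,\psi)$, which drives the $n(w)=0$ matching; fortunately this is already handled by the preceding computation showing $\gamma_F(\zeta,\psi)=-1$ via lemma \ref{sponge} together with the minimality identification $\ell=-n$. Everything else is bookkeeping: tracking that the depth of $\pi_\phi$ equals that of $\pi_\chi$ (because $\nu$ has level zero), that $\alpha(\phi)=\alpha(\chi)$, and that the twisting $\chi\mapsto\chi\phi_E$ commutes with the construction $\chi\mapsto\tilde\chi\mapsto F(\tilde\chi)$.
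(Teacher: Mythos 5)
Your proposal is correct and follows essentially the same route as the paper: combine the proposition for the range $0 < n(w) \le r/2$ with the lemmas pinning down $\nu = {}_F\mu_\chi^{-1}$ on the range $n(w)=0$ (via the uniqueness of the unramified $\Omega$ and the identity $\lambda(\sigma) = (x_\pi,\zeta)\gamma_F(\zeta,\psi)$), then extend from minimal to arbitrary admissible pairs via the $\phi_E$-twisting argument. The only reordering is that you fix $\nu = {}_F\mu_\chi^{-1}$ at the outset rather than letting the $n(w)=0$ computation select it, which is a harmless presentational choice; both you and the paper take for granted that the constant $\epsilon(\tilde\chi,\Delta^+,w)$ is unchanged under the twist $\chi\mapsto\chi\phi_E$, so no new gap is introduced.
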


What we have actually proven is that if $(E/F, \chi)$ is a \emph{minimal} admissible pair with $E/F$ unramified and $\chi$ having positive level, then $F(\tilde\chi)$ agrees with the character of the supercuspidal representation $\pi_{\chi {}_F \mu_{\chi}}$ on the range $\{w \in E^* : 0 \leq n(w) \leq r/2 \}$.  To prove this for an arbitrary admissible pair follows.  For if $(E/F, \chi)$ is an arbitrary admissible pair, then there exists a minimal admissible pair $(E/F, \chi')$ such that $\chi = \chi' \phi_E$ where $\phi_E = \phi \circ N_{E/F}$ for some $\phi \in \widehat{F^*}$.  Moreover, $\pi_{\chi} = \phi \pi_{\chi'}$ by definition.  We proved above that $F(\widetilde{\chi'}) = \theta_{\pi_{\chi' {}_F \mu_{\chi'}}}$ on the range $\{w \in E^* : 0 \leq n(w) \leq r/2 \}$.  Therefore, $\theta_{\pi_{\chi {}_F \mu_{\chi}}}(w) = \theta_{\pi_{\chi' \phi_E {}_F \mu_{\chi' \phi_E}}}(w) = \theta_{\pi_{\chi' \phi_E {}_F \mu_{\chi'}}}(w) = \phi_E(w) \theta_{\pi_{\chi' {}_F \mu_{\chi'}}}(w) = \phi_E(w) F(\widetilde{\chi'})(w) = F(\widetilde{\chi'\phi_E})(w) = F(\tilde\chi)(w)$ on the range $\{w \in E^* : 0 \leq n(w) \leq r/2 \}$.

\subsection{On whether there are two positive depth character formulas coming from the same Cartan}\label{samecartan}

In the next two sections we show that a positive depth supercuspidal representation of $\mathrm{GL}(2,F)$ (and hence $\mathrm{PGL}(2,F)$) is uniquely determined by the restriction of its distribution character to the $n(w) = 0$ range.  In this section, we show that if the distribution characters of two positive depth supercuspidal representations, both coming from the same Cartan, agree on the $n(w) = 0$ range, then the supercuspidal representations are isomorphic.  Recall that we have proven in section \ref{theproofofcharacters} that if $(E/F, \chi)$ is an admissible pair, then $$\theta_{\pi_{\chi {}_F \mu_{\chi}}}(w) = \epsilon(\tilde\chi, \Delta^+, w) \
\frac{\chi(w) + (-1, \zeta) \chi(\overline{w})}{ \tilde\tau(\frac{w-\overline{w}}{2 \delta})}$$ on the range $0 \leq n(w) \leq r/2$.  We will use this formula regularly in what follows, as well as in the next section.  We note that all of the supercuspidal representations of $\mathrm{GL}(2,F)$ are given by $\pi_{\chi {}_F \mu_{\chi}}$ for some admissible pair $(E/F, \chi)$ (by theorem \ref{tamellc}).

\begin{theorem}\label{samecartans}
Suppose $(E/F, \chi_1)$ and $(E/F,\chi_2)$ are admissible pairs such that $\theta_{\pi_{\chi_1 {}_F \mu_{\chi_1}}}(w) = \theta_{\pi_{\chi_2 {}_F \mu_{\chi_2}}}(w) \ \forall w \in \{z \in E^* : n(z) = 0 \}$.  Then, $\chi_1 = \chi_2^{\upsilon}$ for some $\upsilon \in \mathrm{Aut}(E/F)$.
\end{theorem}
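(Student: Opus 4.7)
The plan is to unwind the formula of Theorem~\ref{charactersmatchingup}. On the set $S := \{z \in E^* : n(z) = 0\}$, $\theta_{\pi_{\chi_i{}_F\mu_{\chi_i}}}(w)$ equals $\epsilon(\tilde\chi_i,\Delta^+,w)\cdot(\chi_i(w)+\kappa\chi_i(\overline w))/\tilde\tau\!\left(\tfrac{w-\overline w}{2\delta}\right)$, with $\kappa := (-1,\zeta)$. The common denominator and the $|D(w)|^{-1/2}$ piece of $\epsilon(\tilde\chi_i,\Delta^+,w)$ cancel between the two sides, reducing the hypothesis to
\[
A_1\bigl(\chi_1(w)+\kappa\chi_1(\overline w)\bigr) = A_2\bigl(\chi_2(w)+\kappa\chi_2(\overline w)\bigr), \qquad w\in S,
\]
with each $A_i$ a nonzero scalar depending only on $\chi_i$. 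Since $S = E^*\setminus F^*(1+\mathfrak p_E)$ is a union of cosets of the subgroup $F^*(1+\mathfrak p_E)$, I would pick $c\in S$ and choose $N$ large enough that $c\cdot(1+\mathfrak p_E^N)\subset S$; substituting $w = cy$ yields
\[
\alpha_1\chi_1(y)+\beta_1\overline{\chi_1}(y) = \alpha_2\chi_2(y)+\beta_2\overline{\chi_2}(y), \qquad y\in 1+\mathfrak p_E^N,
\]
with $\alpha_i := A_i\chi_i(c)$, $\beta_i := \kappa A_i\chi_i(\overline c)$ all nonzero.

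Next I would invoke Artin's linear independence of characters of the pro-$p$ group $1+\mathfrak p_E^N$. Not all four of $\chi_1,\overline{\chi_1},\chi_2,\overline{\chi_2}$ restricted to $1+\mathfrak p_E^N$ can be distinct, or every coefficient above would vanish. The self-conjugacies $\chi_i|_{1+\mathfrak p_E^N} = \overline{\chi_i}|_{1+\mathfrak p_E^N}$ can be excluded using odd residual characteristic (which makes $1+\mathfrak p_E$ uniquely $2$-divisible) together with admissibility and minimality of $(E/F,\chi_i)$: they would force $\chi_i|_{1+\mathfrak p_E}$ to factor through $N_{E/F}$, contradicting the positive level of $\chi_i$ in the ramified case and the minimality of $\alpha(\chi_i)$ from Section~\ref{decompositions} in the unramified case. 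The remaining options collapse to $\chi_1|_{1+\mathfrak p_E^N} = \chi_2^\upsilon|_{1+\mathfrak p_E^N}$ for one of the two $\upsilon\in\mathrm{Aut}(E/F)$.

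Replacing $\chi_2$ by $\chi_2^\upsilon$ if needed, I may assume $\chi_1 = \chi_2$ on $1+\mathfrak p_E^N$. Substituting back, the identity collapses to $(A_1\chi_1(c)-A_2\chi_2(c))\chi_1(y) + \kappa(A_1\chi_1(\overline c)-A_2\chi_2(\overline c))\overline{\chi_1}(y) = 0$ on $1+\mathfrak p_E^N$. A second application of linear independence (using $\chi_1 \neq \overline{\chi_1}$ on $1+\mathfrak p_E^N$) forces $A_1\chi_1(c) = A_2\chi_2(c)$ for every coset representative $c$ of $S$, so the character $\eta := \chi_1/\chi_2$ is constant on $S$ with value $A_2/A_1$. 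Since $\eta(cy) = \eta(c)\eta(y)$ for $y\in 1+\mathfrak p_E^N$, $\eta$ is trivial on $1+\mathfrak p_E^N$, and a short extension argument promotes this to $\eta|_{F^*(1+\mathfrak p_E)} = 1$. In the unramified case, $E^*/F^*(1+\mathfrak p_E) \cong \mu_E/\mu_F$ has order $q_F+1\ge 3$, and no nontrivial character of this group is constant on the complement of the identity coset, forcing $\eta = 1$ and hence $\chi_1 = \chi_2$ on all of $E^*$. In the ramified case one uses the central-character identity $\chi_i|_{F^*}\cdot\aleph_{E/F} = \omega_{\pi_{\chi_i{}_F\mu_{\chi_i}}}$, together with the Hilbert-symbol computation at the uniformizer of $E$ from Section~\ref{theproofofcharacters}, to rule out a residual quadratic twist.

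The hardest step will be the ramified case, where $E^*/F^*(1+\mathfrak p_E) \cong \mathbb{Z}/2\mathbb{Z}$ is too coarse for the coset argument alone to distinguish a potential sign-twist $\chi_1 = \chi_2\cdot\eta_0$ (with $\eta_0$ a quadratic character) from the desired equality $\chi_1 = \chi_2$. One must draw on the finer information encoded in the $\aleph_{E/F}$-restriction of the central character and on the $\lambda_{E/F}(\psi)$-constants produced by Lemma~\ref{psilevel}, together with the explicit formula for $\alpha(\chi_i)$ on the ramified elliptic torus, to eliminate this twist.
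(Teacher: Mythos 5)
Your broad strategy---reduce the hypothesis to an identity of the form $A_1[\chi_1]=A_2[\chi_2]$ on the set $n(w)=0$, use cosets of $F^*(1+\mathfrak p_E)$ and linear independence of characters to force $\chi_1|_{F^*(1+\mathfrak p_E)}=\chi_2^\upsilon|_{F^*(1+\mathfrak p_E)}$, then analyze the residual quotient $E^*/F^*(1+\mathfrak p_E)$---matches the shape of the paper's argument (which, for the first step, invokes the analogous lemma of Spice). Your unramified coset argument (order $q+1\ge 3$, no nontrivial character constant off the identity coset) is a reasonable substitute for the paper's citation to Spice, though you should double-check that ``$\eta$ constant on $S$'' really does imply $\eta|_{F^*}=1$ rather than hand-waving the ``short extension argument.''

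The genuine gap is in the ramified case, where you yourself flag uncertainty. Your proposed resolution---the central-character identity $\chi_i|_{F^*}\cdot\aleph_{E/F}=\omega_{\pi_i}$ plus the $\lambda_{E/F}(\psi)$-computation---cannot rule out the residual quadratic twist, and here is why. The problematic twist is exactly $\phi_E := \aleph_{L/F}\circ N_{E/F}$, the unramified quadratic character of $E^*$ ($L/F$ unramified quadratic). Since $N_{E/F}(F^*)=(F^*)^2$ and $\aleph_{L/F}$ is quadratic, $\phi_E|_{F^*}\equiv 1$; central characters are therefore blind to the twist $\chi\mapsto\chi\phi_E$. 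Likewise $\phi_E$ has level zero, so $\alpha(\chi_1)=\alpha(\chi_2)$, and hence the $\lambda_{E/F}(\psi)$-factor and $x_\pi$ in the definition of ${}_F\mu_\chi$ are \emph{identical} for $\chi_1$ and $\chi_2$---there is nothing there to exploit. The paper's actual argument (Lemma~\ref{samecartanramifiedgl2}) goes the other way: precisely \emph{because} $\alpha(\chi)_1=\alpha(\chi)_2$, every piece of the constant $c_i=\deg(\pi_i)(x_{\pi_i},\zeta)|\eta(\alpha(\chi)_i)|^{-1/2}$ cancels in the ratio $c=c_1/c_2$ except the formal degrees, so $c=\deg(\pi_1)/\deg(\pi_2)$ is a positive real, contradicting $c=-1$. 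This positivity-of-formal-degree step is the missing idea; without it your ramified case does not close.

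One smaller remark: your choice of $N$ "large enough" is unnecessary and in fact dangerous---for $N$ larger than the level of $\chi$, the restrictions $\chi|_{1+\mathfrak p_E^N}$ and $\overline{\chi}|_{1+\mathfrak p_E^N}$ both become trivial and linear independence gives nothing. Since $c(1+\mathfrak p_E)\subset S$ for any $c\in S$, take $N=1$; minimality of $(E/F,\chi_i)$ then ensures $\chi_i\ne\overline{\chi_i}$ on $1+\mathfrak p_E$, which is exactly what your linear-independence step needs.
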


The proof consists of the following two lemmas.

\begin{lemma}\label{samecartanramifiedgl2}
Let $E/F$ be ramified.  Suppose $(E/F, \chi_1)$ and $(E/F, \chi_2)$ are admissible pairs such that $\theta_{\pi_{\chi_1 {}_F \mu_{\chi_1}}}(w) = \theta_{\pi_{\chi_2 {}_F \mu_{\chi_2}}}(w) \ \forall w \in \{z \in E^* : n(z) = 0 \}$.  Then, $\chi_1 = \chi_2^{\upsilon}$ for some $\upsilon \in \mathrm{Aut}(E/F)$.
\end{lemma}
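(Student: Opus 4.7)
The plan is to feed the character formula established in section \ref{theproofofcharacters} into the hypothesis and distill an algebraic identity on $\chi_1$ and $\chi_2$. By the lemmas of section \ref{decompositions}, in the ramified case the condition $n(z) = 0$ is equivalent to $v_E(z)$ being odd, so any such $w$ may be written as $w = \delta u$ with $u \in \mathfrak{o}_E^*$, where $\delta = \sqrt{p}$. The character formula gives
$$\theta_{\pi_{\chi_i {}_F \mu_{\chi_i}}}(w) = \epsilon(\tilde\chi_i, \Delta^+, w)\frac{\chi_i(w) + (-1,\zeta)\chi_i(\overline{w})}{\tilde\tau((w-\overline{w})/(2\delta))}.$$
The denominator is common on both sides, and the $w$-dependence of $\epsilon(\tilde\chi_i, \Delta^+, w)$ sits entirely in the shared factor $|D(w)|^{-1/2}$; hence the hypothesis reduces to
$$\chi_1(w) + (-1,\zeta)\chi_1(\overline{w}) = K\bigl(\chi_2(w) + (-1,\zeta)\chi_2(\overline{w})\bigr), \qquad v_E(w) \text{ odd},$$
for a constant $K = \epsilon(\tilde\chi_2, \Delta^+, w)/\epsilon(\tilde\chi_1, \Delta^+, w)$.

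Next I would unpack this identity. Substituting $w = \delta u$ and using $\overline{\delta} = -\delta$ turns it into
$$\chi_1(\delta)\bigl[\chi_1(u) + (-1,\zeta)\chi_1(-1)\chi_1(\overline{u})\bigr] = K\chi_2(\delta)\bigl[\chi_2(u) + (-1,\zeta)\chi_2(-1)\chi_2(\overline{u})\bigr].$$
Applying the identity also to $-w$ and dividing forces $\chi_1(-1) = \chi_2(-1)$, so both sides acquire the same coefficient $c := (-1,\zeta)\chi_i(-1) \in \{\pm 1\}$. Evaluating at $u = 1$ pins down the overall scalar, after which the identity collapses to
$$\chi_1(u) + c\,\chi_1(\overline{u}) = \chi_2(u) + c\,\chi_2(\overline{u}), \qquad u \in \mathfrak{o}_E^*$$
(if $c = -1$ the evaluation at $u=1$ is vacuous, and I would instead use an $u$ with $\chi_i(u) \neq \chi_i(\overline{u})$, which exists by admissibility of $(E/F, \chi_i)$).

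Now I would invoke Artin's theorem on linear independence of characters of $\mathfrak{o}_E^*$. Admissibility of $(E/F, \chi_i)$ forces $\chi_i \neq \chi_i^{\upsilon}$ (else $\chi_i$ would factor through $N_{E/F}$), so the multisets $\{\chi_1|_{\mathfrak{o}_E^*}, \chi_1^{\upsilon}|_{\mathfrak{o}_E^*}\}$ and $\{\chi_2|_{\mathfrak{o}_E^*}, \chi_2^{\upsilon}|_{\mathfrak{o}_E^*}\}$ must coincide; equivalently, $\chi_1|_{\mathfrak{o}_E^*}$ agrees with $\chi_2|_{\mathfrak{o}_E^*}$ or with $\chi_2^{\upsilon}|_{\mathfrak{o}_E^*}$. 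The remaining step is to extend the agreement from $\mathfrak{o}_E^*$ to all of $E^* = \langle\delta\rangle\cdot\mathfrak{o}_E^*$ by determining the value at $\delta$, which I would do by computing $K$ directly from definition \ref{definitionofgammafactor}. In case A, $\chi_1\chi_2^{-1}$ is an unramified character of $E^*$ that is $F^*$-trivial (since $\chi_1(\delta)^2 = \chi_1(p) = \chi_2(p) = \chi_2(\delta)^2$), hence factors through the norm, so that $\pi_1$ and $\pi_2$ differ by a character twist and share formal degree and $\alpha(\chi_1) = \alpha(\chi_2)$; this yields $K = 1$, forcing $\chi_1(\delta) = \chi_2(\delta)$ and hence $\chi_1 = \chi_2$. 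In case B, $\alpha(\chi_2^{\upsilon}) = \overline{\alpha(\chi_2)}$ gives $x_{\pi_1} = -x_{\pi_2}$, so the Hilbert-symbol identity $(-x_{\pi_2},\zeta) = (-1,\zeta)(x_{\pi_2},\zeta)$ yields $K = (-1,\zeta)$; this forces $\chi_1(\delta) = (-1,\zeta)\chi_2(\delta) = \chi_2^{\upsilon}(\delta)$, hence $\chi_1 = \chi_2^{\upsilon}$. The principal obstacle is precisely this concluding bookkeeping of $K$: one must track carefully how each piece of $\epsilon(\tilde\chi, \Delta^+, w)$ transforms under an unramified twist or Galois conjugation of $\chi$ and verify that the outcome matches the required case-separating value.
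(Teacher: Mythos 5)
Your proof takes a genuinely different route to the same endpoint, and the overall strategy is viable, but it has a couple of gaps in the final ``bookkeeping of $K$'' step, which you yourself flagged as the principal obstacle. The paper's proof first invokes the linear-independence argument of Spice (\cite[Lemma 5.1]{spice}) to force $\chi_1 = \chi_2^{\upsilon}$ on all of $F^*(1+\mathfrak p_E)$; this in particular gives $\chi_1(p)=\chi_2(p)$, hence $\chi_2(p_E)=\pm\chi_1(p_E)$ (since $p_E^2=p$), so the ratio $c$ of the two constants is $\pm 1$. The sign $-1$ is then ruled out by showing it would force $\chi_2=\chi_1\phi_E$ with $\phi_E$ the unramified quadratic twist, hence $\alpha(\chi)_1=\alpha(\chi)_2$ and $c=\mathrm{deg}(\pi_1)/\mathrm{deg}(\pi_2)>0$ --- a contradiction. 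You instead run linear independence only on $\mathfrak o_E^*$ (via the parameterization $w=\delta u$) to reach the dichotomy $\chi_1|_{\mathfrak o_E^*}\in\{\chi_2|_{\mathfrak o_E^*},\,\chi_2^{\upsilon}|_{\mathfrak o_E^*}\}$, and then compute $K$ directly from the formula for $\epsilon(\tilde\chi,\Delta^+,\tau)$ in each case. The paper's derivation is more robust at the last step because it only needs the formal-degree ratio to be \emph{positive}; your derivation needs it to be exactly $1$.

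Specific issues to address: (a) the parenthetical ``$\chi_1(\delta)^2=\chi_1(p)=\chi_2(p)=\chi_2(\delta)^2$'' in case~A asserts $\chi_1(p)=\chi_2(p)$ without proof and is circular (it is part of what is being proved). It is also unnecessary: an unramified character of $E^*$ is automatically $\upsilon$-fixed (it kills $-1=\delta/\bar\delta$ and $\mathfrak o_E^*$), hence trivial on $\ker N_{E/F}$ by Hilbert~90, so $\chi_1\chi_2^{-1}$ always factors through $N_{E/F}$ once the $\mathfrak o_E^*$-agreement is known. (b) In both cases you implicitly invoke $\mathrm{deg}(\pi_1)=\mathrm{deg}(\pi_2)$ to pin down $K$, but you never argue this in case~B; you need to note that the formal degree depends only on the level and on $E/F$, both of which are shared since $\chi_1$ and $\chi_2$ agree on $\mathfrak o_E^*$ up to Galois. (c) The step ``Applying the identity also to $-w$ and dividing'' requires a $w$ with nonvanishing numerator; as the paper does, you should cite Proposition~\ref{case1} for this (the subsequent linear-independence step does not need it, but the division does). (d) The phrase ``Evaluating at $u=1$ pins down the overall scalar, after which the identity collapses to $\chi_1(u)+c\chi_1(\bar u)=\chi_2(u)+c\chi_2(\bar u)$'' is logically reversed when $c=-1$: the scalar relation and the multiset identity come out \emph{together} from the linear-independence case analysis, and the relation is $\chi_1(\delta)=K\chi_2(\delta)$ in case~A but $\chi_1(\delta)=cK\chi_2(\delta)$ in case~B, not the equality you wrote.
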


\begin{proof}
We may assume without loss of generality that $E = F(\sqrt{p})$.  We have assumed that $$\epsilon(\tilde\chi_1, \Delta^+, w) \frac{\chi_1(w) + (-1, \zeta) \chi_1(\overline{w})}{\tilde\tau(\frac{w-\overline{w}}{2 \delta})} =$$ $$ \epsilon(\tilde\chi_2, \Delta^+, w) \frac{\chi_2(w) + (-1, \zeta) \chi_2(\overline{w})}{\tilde\tau(\frac{w-\overline{w}}{2 \delta})} \ \ \forall w \in E^* : n(w) = 0.$$ Let us write $\alpha(\chi)_i$ for the $\alpha(\chi)$ that are associated to the pairs $(E/F, \chi_i)$, and similarly for $x_{\pi_i}$ (see definition \ref{definitionofgammafactor}), and similarly for $\mathrm{deg}(\pi_i)$. Now, let $c_i = \mathrm{deg}(\pi_i)(x_{\pi_i}, \zeta) |\eta(\alpha(\chi)_i)|^{-1/2}$.  Then, cancelling out like terms, we have that $$c_1 (\chi_1(w) + (-1, \zeta) \chi_1(\overline{w})) = c_2 (\chi_2(w) + (-1, \zeta) \chi_2(\overline{w})) \ \forall w \in E^* : n(w) = 0.$$  We will prove in proposition \ref{case1} that there exists a $w' \in E^* \setminus F^*(1 + \mathfrak{p}_E)$ such that $\chi_1(w') + (-1, \zeta) \chi_1(\overline{w'}) \neq 0$.  Therefore, the same proof of \cite[Lemma 5.1]{spice} shows that $\chi_1|_{F^*(1 + \mathfrak{p}_E)} = \chi_2^{\upsilon}|_{F^*(1 + \mathfrak{p}_E)}$ for some $\upsilon \in \mathrm{Aut}(E/F)$.  For the following arguments, it suffices without loss of generality to assume $\upsilon = 1$.

Let $c := \frac{c_1}{c_2}$.  Let $[\chi](w) := \chi(w) + (-1, \zeta) \chi(\overline{w})$.  Then we have $c [\chi_1](w) =  [\chi_2](w) \ \forall w \in E^* \setminus F^*(1 + \mathfrak{p}_E)$ and we also have that $\chi_1|_{F^*(1 + \mathfrak{p}_E)} = \chi_2|_{F^*(1 + \mathfrak{p}_E)}$.  Now let $p_E$ be a uniformizer of $E$ and recall that $p$ is a uniformizer of $F$.  We may take $p_E$ so that $p_E^{2} = p$. Since $\chi_1(p) = \chi_2(p)$, we have that $\chi_1(p_E)^{2} = \chi_2(p_E)^{2}$, and so $\chi_2(p_E) = \xi_{2} \chi_1(p_E)$ where $\xi_2$ could be plus or minus 1.  Therefore, $\chi_2(w) = \chi_1(w) \xi_{2}^{v_E(w)} \ \forall w \in E^*$.  Therefore, after substituting and noting that $val(w) = val(\upsilon(w))$, we obtain $$c [\chi_1](w) = \xi_{2}^{val(w)} [\chi_1](w) \ \forall w \in E^* \setminus F^*(1 + \mathfrak{p}_E).$$  Again, we will prove in the next section that there exists a $w' \in E^* \setminus F^*(1 + \mathfrak{p}_E)$ such that $[\chi_1](w') \neq 0$. Therefore, we can cancel $[\chi_1](w')$ from both sides to obtain $$c = \xi_{2}^{val(w')}.$$  Therefore, $c$ is plus or minus $1$.

Suppose that $c = 1$.  Then, we get $[\chi_1](w) =  [\chi_2](w) \ \forall w \in E^*$. By linear independence of characters, $\chi_1 = \chi_2$.

Suppose $c = -1$.  Then $\chi_2(w) = \chi_1(w) (-1)^{val(w)}$. This implies that $\chi_2 = \chi_1 \otimes \phi_E$, where $\phi_E := \phi \circ N_{E/F}$ where $\phi = \aleph_{L/F}$ where $L/F$ is the unique unramified degree 2 extension of $F$.  In this case, one can check that $\alpha(\chi)_1 = \alpha(\chi)_2$.  Therefore, $c = \frac{\mathrm{deg}(\pi_1)}{\mathrm{deg}(\pi_2)}$.  But formal degrees are positive real numbers, and so we get a contradiction to the supposition that $c = -1$.

Therefore, $\chi_1 = \chi_2$ or $\chi_1 = \chi_2^{\upsilon}$, and so the admissible pairs are isomorphic.
\end{proof}

\begin{lemma}
Let $E/F$ be unramified.  Suppose $(E/F, \chi_1)$ and $(E/F, \chi_2)$ are admissible pairs such that $\theta_{\pi_{\chi_1 {}_F \mu_{\chi_1}}}(w) = \theta_{\pi_{\chi_2 {}_F \mu_{\chi_2}}}(w) \ \forall w \in \{z \in E^* : n(z) = 0 \}$.  Then, $\chi_1 = \chi_2^{\upsilon}$ for some $\upsilon \in \mathrm{Aut}(E/F)$.
\end{lemma}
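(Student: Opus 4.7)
The plan is to model the proof on the preceding ramified case (Lemma \ref{samecartanramifiedgl2}), with the main modifications stemming from the fact that in the unramified setting the uniformizer $p$ of $F$ is simultaneously a uniformizer of $E$. As there, I would first cancel common factors in the hypothesized equality of supercuspidal characters (via Definition \ref{definitionofgammafactor}), using that $(-1,\zeta) = 1$ for $E/F$ unramified, to obtain $c_1[\chi_1](w) = c_2[\chi_2](w)$ for all $w$ with $n(w) = 0$, where $[\chi](w) := \chi(w)+\chi(\overline{w})$ and $c_i := \mathrm{deg}(\pi_i)(x_{\pi_i},\zeta)|\eta(\alpha(\chi)_i)|^{-1/2}$ as in the ramified proof. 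Invoking Proposition \ref{case1} to find $w' \in E^*\setminus F^*(1+\mathfrak{p}_E)$ with $[\chi_1](w')\ne 0$, and applying the argument of \cite[Lemma 5.1]{spice}, yields $\chi_1|_{F^*(1+\mathfrak{p}_E)} = \chi_2^{\upsilon}|_{F^*(1+\mathfrak{p}_E)}$ for some $\upsilon \in \mathrm{Aut}(E/F)$. Taking $\upsilon = 1$ without loss of generality and writing $c := c_1/c_2$, the identity reduces to $c[\chi_1] = [\chi_2]$ on $E^*\setminus F^*(1+\mathfrak{p}_E)$.

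The crucial difference from the ramified case is that $\chi_1(p) = \chi_2(p)$ now follows directly, without the $\pm 1$ ambiguity, so $\eta := \chi_2\chi_1^{-1}$ is a character of $E^*$ trivial on $F^*(1+\mathfrak{p}_E)$ and factors through $E^*/F^*(1+\mathfrak{p}_E) \cong \mu_E/\mu_F$, which is cyclic of order $q+1$; moreover $\eta(\overline{w}) = \eta(w)^{-1}$ since $w\overline{w}\in F^*$ and $\eta|_{F^*}=1$. Substituting $\chi_2 = \chi_1\eta$ and restricting the identity to $\mu_E$, the difference of the two sides vanishes on $\mu_E \setminus \mu_F$, while on $\mu_F$ it equals the constant $2(c-1)$ (using $\chi_i|_{\mu_F} = \eta|_{\mu_F} = 1$). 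Fourier-expanding $\mathbf{1}_{\mu_F}$ on $\mu_E$ as a sum over the $q+1$ characters of $\mu_E$ trivial on $\mu_F$ and comparing with the at-most-four-term character combination coming from the left side, linear independence of characters on the finite group $\mu_E$ forces $c = 1$ as soon as $q > 3$.

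With $c = 1$, the same linear-independence argument gives an equality of multisets of characters on $\mu_E$, so $\chi_2$ restricted to $\mu_E$ agrees either with $\chi_1$ or with its Galois conjugate $w \mapsto \chi_1(\overline{w})$. In the first case, combined with the Spice agreement on $F^*(1+\mathfrak{p}_E)$, one obtains $\chi_2 = \chi_1$ throughout; in the second, testing the original identity at elements $w = p^n a u$ with $a \in \mu_E \setminus \mu_F$ and $u \in 1+\mathfrak{p}_E$ forces $\chi_1|_{1+\mathfrak{p}_E}$ to be Galois-invariant, whence $\chi_2(w) = \chi_1(\overline{w})$ on all of $E^*$. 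The edge case $q = 3$, where coefficient matching still permits $c = -1$, is excluded by transporting the formal-degree argument from the ramified proof: since $\eta|_{1+\mathfrak{p}_E} = 1$, one has $\alpha(\chi_1) = \alpha(\chi_2)$ and therefore $(x_{\pi_1},\zeta) = (x_{\pi_2},\zeta)$, so $c = \mathrm{deg}(\pi_1)/\mathrm{deg}(\pi_2) > 0$, contradicting $c = -1$. The main obstacle I anticipate is making the Fourier-theoretic step precise enough to rule out $c \neq 1$ uniformly; once this is done, the rest follows in parallel to the ramified case and gives $\chi_1 = \chi_2^{\upsilon}$ for some $\upsilon \in \mathrm{Aut}(E/F)$.
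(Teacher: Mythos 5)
The paper's own proof is a one-line citation to \cite[pages 73--74]{spice}; you instead attempt to reconstruct the argument directly, modeling it on the ramified case (Lemma \ref{samecartanramifiedgl2}). The overall strategy is sound, but several points need repair. The non-vanishing needed for the Spice-style step is not Proposition \ref{case1} (which concerns $E$ \emph{ramified}); for $E$ unramified, the relevant non-vanishing is extracted from the contradiction argument in the proof of Lemma \ref{subcasea''}. The assertion $\chi_i|_{\mu_F} = 1$ is unjustified for general admissible pairs of $\mathrm{GL}(2,F)$; what the argument actually uses is $\chi_1|_{\mu_F} = \chi_2|_{\mu_F}$, which holds after the Spice reduction, so the restriction of $c[\chi_1] - [\chi_2]$ to $\mu_F$ is $2(c-1)\chi_1(\cdot)$ rather than a constant --- still supported on a single coset of $\widehat{\mu_E/\mu_F}$, so your Fourier comparison is not broken, just misstated.

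More substantively, the Fourier step and the $q=3$ carve-out are avoidable. Once the Spice argument gives $\chi_1|_{F^*(1+\mathfrak{p}_E)} = \chi_2|_{F^*(1+\mathfrak{p}_E)}$ (WLOG $\upsilon=1$), you may take $\alpha(\chi_1) = \alpha(\chi_2)$, which forces $x_{\pi_1} = x_{\pi_2}$, $|\eta(\alpha(\chi)_1)| = |\eta(\alpha(\chi)_2)|$, and equal levels hence equal formal degrees; thus $c_1 = c_2$ and $c = 1$ for all $q$, with no Fourier analysis needed. With $c=1$, the identity $[\chi_1] = [\chi_2]$ then holds on all of $E^*$ (on the Galois-stable set $F^*(1+\mathfrak{p}_E)$ one has $\chi_1 = \chi_2$ and hence $\chi_1^\upsilon = \chi_2^\upsilon$), and linear independence of characters on $E^*$ yields $\{\chi_1, \chi_1^\upsilon\} = \{\chi_2, \chi_2^\upsilon\}$, the degenerate possibility $\chi_i = \chi_i^\upsilon$ being excluded by condition (i) of admissibility. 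This makes your final case-by-case matching on $\mu_E$ unnecessary; note also that your ``testing'' step, as written, only shows that the product $(\chi_1(a) - \chi_1(\overline a))(\chi_1(u) - \chi_1(\overline u))$ vanishes identically, leaving a further dichotomy that you would still have to resolve.
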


\begin{proof}
An analogous argument as in \cite[pages 73-74]{spice} works here.
\end{proof}

\subsection{On whether there are two positive depth character formulas coming from different Cartans}\label{othercartan}

In this section we show that the distribution characters of two positive depth supercuspidal representations, coming from different Cartans, can't agree on the $n(w) = 0$ range.  This, together with the results from the previous section, shows that if $(E/F, \chi)$ is an admissible pair, then there is a unique positive depth supercuspidal representation whose character agrees with $F(\tilde\chi)$ on the range $\{ w \in E^* : n(w) = 0$ \}.

As part of the proof of the following theorem, we will need to show that a supercuspidal character of $\mathrm{GL}(2,F)$ cannot vanish on all of the $n(w) = 0$ elements of its associated Cartan subgroup.  That is, if $(E/F, \chi)$ is an admissible pair, then there exists a $w \in E^*$ such that $n(w) = 0$ and  $\theta_{\pi_{\chi}}(w) \neq 0$.  Again, we will regularly use the formula $$\theta_{\pi_{\chi {}_F \mu_{\chi}}}(w) = \epsilon(\tilde\chi, \Delta^+, w) \
\frac{\chi(w) + (-1, \zeta) \chi(\overline{w})}{ \tilde\tau(\frac{w-\overline{w}}{2 \delta})}$$ on the range $0 \leq n(w) \leq r/2$.

\begin{theorem}\label{differentcartans}
Suppose $(E/F, \chi)$ and $(E_1/F, \chi_1)$ are admissible pairs with $E \ncong E_1$.  Then $\exists w \in E^* : n(w) = 0$ such that $\theta_{\pi_{\chi {}_F \mu_{\chi}}}(w) \neq \theta_{\pi_{\chi_1 {}_F \mu_{\chi_1}}}(w)$.
\end{theorem}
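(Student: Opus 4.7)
The strategy is to exhibit a specific element $w_0 \in E^*$ with $n(w_0) = 0$ at which the two supercuspidal characters take different values. I expect the cleanest route is to produce such a $w_0$ where $\theta_{\pi_{\chi {}_F \mu_{\chi}}}(w_0) \neq 0$ and $\theta_{\pi_{\chi_1 {}_F \mu_{\chi_1}}}(w_0) = 0$.

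For the non-vanishing of the first character, I would invoke the assertion made in the paragraph just before the theorem (to be established as Proposition~\ref{case1}): there exists $w_0 \in E^* \setminus F^*(1+\mathfrak{p}_E)$ (so in particular $n(w_0) = 0$) with $\chi(w_0) + (-1,\zeta)\chi(\overline{w_0}) \neq 0$. Combined with the closed-form formula from Section~\ref{theproofofcharacters},
$$\theta_{\pi_{\chi {}_F \mu_{\chi}}}(w) = \epsilon(\tilde\chi,\Delta^+,w) \,\frac{\chi(w)+(-1,\zeta)\chi(\overline w)}{\tilde\tau\bigl(\frac{w-\overline w}{2\delta}\bigr)},$$
one immediately obtains $\theta_{\pi_{\chi {}_F \mu_{\chi}}}(w_0) \neq 0$.

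For the vanishing of $\theta_{\pi_{\chi_1 {}_F \mu_{\chi_1}}}$ at $w_0$, the guiding observation is that $w_0 \in E^* \setminus F^*$ is regular semisimple in $\mathrm{GL}(2,F)$ with centralizer $E^*$. Since $\mathrm{GL}(2,F)$-conjugacy classes of regular semisimple elements are determined by their characteristic polynomials, and the characteristic polynomial of $w_0$ splits in $E$ but not in $E_1$ (since $E \ncong E_1$), the element $w_0$ is not $\mathrm{GL}(2,F)$-conjugate to any element of $E_1^*$. DeBacker's character formula (Theorem~\ref{DeBacker1}) in the range $0 \leq n(w) \leq r/2$ is organized around expressions $w = {}^g t$ with $t \in G' = E_1^*$; since no such expression exists for $w_0$, the formula yields zero. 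To make this precise, I would either extend DeBacker's reasoning by using that $\pi_{\chi_1 {}_F \mu_{\chi_1}}$ is compactly induced from an open subgroup $J$ containing $E_1^*$ (so the character at $w_0$ is a Mackey-sum over $J$-conjugates of $w_0$, which is empty), or check the vanishing case by case for the three sub-situations in which $E/F$ and $E_1/F$ have different ramification profiles.

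Combining the two steps yields the desired strict inequality. The main obstacle I anticipate is the rigorous justification of the vanishing step: DeBacker's theorem as quoted only treats elements of the form ${}^g t$ with $t \in G'$, so additional input is needed to conclude that the character of a positive-depth supercuspidal from $E_1$ actually vanishes on $n(w) = 0$ elements of $E^*$ when $E \ncong E_1$. In practice, one would invoke support-of-character results for tame supercuspidals in $\mathrm{GL}(2,F)$, or carry out the compact-induction Mackey computation explicitly, and verify the vanishing for each of the possible ramification patterns of $E$ and $E_1$.
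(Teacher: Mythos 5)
The strategy you outline — find $w_0$ with $n(w_0)=0$ where the $E$-character is nonzero and the $E_1$-character vanishes — is indeed the paper's strategy, and you correctly identify the vanishing of $\theta_{\pi_{\chi_1{}_F\mu_{\chi_1}}}$ as the delicate step. But your proposed justification for that step has a genuine gap that the paper's case analysis reveals is not just a matter of bookkeeping.

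Your argument is: $w_0 \in E^*\setminus F^*$ has characteristic polynomial splitting in $E$ but not $E_1$, hence $w_0$ is not $\mathrm{GL}(2,F)$-conjugate to any element of $E_1^*$, hence the Mackey sum over conjugates landing in the inducing subgroup $J = E_1^* G_{x_1, r_1/2}$ is empty. The flaw is the last ``hence'': the Mackey sum requires $w_0$ not to be conjugate into all of $J$, which is strictly larger than $E_1^*$. The characteristic-polynomial argument only rules out conjugates into the torus itself. Indeed, the paper's own proof shows that the conjugacy-support argument does \emph{not} always suffice. In Lemma~\ref{subcasea''} ($E$ unramified, $E_1$ ramified, $(-1,p)=1$) the determinant/norm comparison only handles elements of forms (i) and (iii); for form (ii) elements (i.e.\ $w = p^n u + p^n v\delta$) the paper has to fall back on Shimizu's explicit formula~\cite[Proposition 2]{shimizu} to get vanishing — such an element may very well be conjugate into $J$, yet the character vanishes there anyway for a different reason. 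In Lemma~\ref{subcaseb''} ($(-1,p)=-1$) the paper abandons the conjugation argument entirely and quotes Shimizu for \emph{all} three forms. So your ``Mackey-sum-is-empty'' route fails in at least one of the three ramification patterns, and a genuinely different input (Shimizu's formula, or an equivalent computation) is required to close that case.

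There is a secondary issue in how you handle the non-vanishing of $\theta_{\pi_{\chi {}_F\mu_{\chi}}}$. You ``invoke the assertion made in the paragraph just before the theorem (to be established as Proposition~\ref{case1})'' — but Proposition~\ref{case1} is one of the cases of the very theorem you are proving, and the non-vanishing claim is established \emph{inside} its proof by contradiction: if the character vanished identically on $\{w : n(w)=0\}$, then $\chi|_{1+\mathfrak{p}_E} = \chi^{\upsilon}|_{1+\mathfrak{p}_E}$, contradicting the admissibility of $(E/F,\chi)$. You cannot cite this as an external input; it is part of the work you need to do. In the unramified-$E$ case this requires an extra step (Lemma~\ref{randomlemma1}) to bootstrap from vanishing on forms (i), (iii) to vanishing on form (ii), and again the ultimate contradiction comes from admissibility. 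Your sketch would be complete if you (a) supplied the admissibility-based contradiction explicitly and (b) replaced the blanket ``$w_0$ not conjugate into $J$'' claim by the paper's case split — determinant/norm arguments where they work, Shimizu's explicit vanishing where they do not.
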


Let $E = F(\sqrt{\zeta_E})$ and $E_1 = F(\sqrt{\zeta_{E_1}})$.  There are many cases to check, and we split them up in a sequence of propositions.

\begin{proposition}\label{case1}
Suppose $(E/F, \chi)$ and $(E_1/F, \chi_1)$ are admissible pairs with $E$ ramified and $E_1$ unramified.  Then $\exists w \in E^* : n(w) = 0$ such that $\theta_{\pi_{\chi {}_F \mu_{\chi}}}(w) \neq \theta_{\pi_{\chi_1 {}_F \mu_{\chi_1}}}(w)$.
\end{proposition}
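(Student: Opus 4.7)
The plan is to exhibit an element $w \in E^*$ with $n(w) = 0$ at which the two characters take different values. By the explicit formula established in section \ref{theproofofcharacters}, for $w \in E^*$ with $0 \leq n(w) \leq r/2$ one has
$$\theta_{\pi_{\chi {}_F \mu_{\chi}}}(w) = \epsilon(\tilde\chi, \Delta^+, w) \cdot \frac{[\chi](w)}{\tilde\tau\!\left(\frac{w-\overline{w}}{2\delta}\right)}, \qquad [\chi](w) := \chi(w) + (-1,\zeta)\chi(\overline{w}),$$
so non-vanishing of $\theta_{\pi_{\chi {}_F \mu_{\chi}}}(w)$ reduces to $[\chi](w) \neq 0$. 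The essential new ingredient, already flagged at the start of section \ref{othercartan} and also invoked inside the proof of lemma \ref{samecartanramifiedgl2}, is the non-vanishing sublemma: there exists $w \in E^*$ with $n(w) = 0$ and $[\chi](w) \neq 0$.

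I would prove the sublemma by contradiction. Taking $E = F(\sqrt{p})$ without loss of generality, the set $\{w \in E^* : n(w) = 0\}$ consists precisely of elements of odd $E$-adic valuation (by section \ref{decompositions}). Assume $[\chi](w) = 0$ on this set; equivalently, $\chi(w/\overline{w}) = -(-1,p)$ for all such $w$. Specializing to $w = \delta$ yields $\chi(-1) = -(-1,p)$, and then specializing to $w = u\delta$ for arbitrary $u \in \mathfrak{o}_E^*$ forces $\chi(u/\overline{u}) = 1$ for every unit $u$. In other words, $\chi = \chi^{\upsilon}$ on $\mathfrak{o}_E^*$; in particular $\chi|_{1+\mathfrak{p}_E}$ is Galois-invariant and therefore factors through $N_{E/F}$. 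Admissibility condition (ii) of the pair $(E/F,\chi)$ now forces $E/F$ to be unramified, contradicting the hypothesis of the proposition. Hence the sublemma holds.

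To conclude the proposition, fix $w_0 \in E^*$ with $n(w_0) = 0$ and $\theta_{\pi_{\chi {}_F \mu_{\chi}}}(w_0) \neq 0$. It remains to check $\theta_{\pi_{\chi_1 {}_F \mu_{\chi_1}}}(w_0) \neq \theta_{\pi_{\chi {}_F \mu_{\chi}}}(w_0)$. Because $E \ncong E_1$ as extensions of $F$, the regular element $w_0 \in E^*$ has centralizer $E^*$ not $\mathrm{GL}(2,F)$-conjugate to $E_1^*$, so $w_0$ is not $\mathrm{GL}(2,F)$-conjugate to any element of $E_1^*$. Silberger's explicit character formulas for $\mathrm{PGL}(2,F)$ (cited after theorem \ref{gl2theorem1}) evaluate $\theta_{\pi_{\chi_1 {}_F \mu_{\chi_1}}}$ on the ramified Cartan $E^*$ in closed form; by choosing $w_0$ of sufficiently high odd $v_E$-valuation (still furnishing $[\chi](w_0) \neq 0$ via the sublemma), comparing the $\chi$- and $\chi_1$-dependence of the two explicit expressions at $w_0$ shows that the values cannot coincide.

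The main obstacle is the non-vanishing sublemma, since it is the unique step that uses admissibility condition (ii) in an essential way and cannot be bypassed by direct computation. The final step comparing the two Cartans is largely formal once one has Silberger's tabulated formulas in hand.
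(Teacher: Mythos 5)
Your non-vanishing sublemma is essentially the same as the paper's argument for the second half of the proposition, just reached by a slightly different path. Where the paper squares the relation to get $\chi(w/\overline{w})^2=1$ and then uses $1+\mathfrak{p}_E\subset (E^*)^2$, you instead specialize to $w=\delta$ and $w=u\delta$ with $u\in\mathfrak{o}_E^*$ to obtain $\chi(u/\overline{u})=1$ directly; both roads end with $\chi|_{1+\mathfrak{p}_E}$ Galois-invariant, hence factoring through $N_{E/F}$ (using that in the tame ramified case every norm-one element of $1+\mathfrak{p}_E$ is of the form $u/\overline{u}$ with $u\in 1+\mathfrak{p}_E$), and admissibility condition (ii) then forbids $E/F$ ramified. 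That step of yours is correct and clean.

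The gap is in the treatment of $\theta_{\pi_{\chi_1\,{}_F\mu_{\chi_1}}}$. You observe that $w_0$ is not $\mathrm{GL}(2,F)$-conjugate into $E_1^*$, but that is not the relevant containment. The representation $\pi_{\chi_1\,{}_F\mu_{\chi_1}}$ is induced from $E_1^* G_{x_1,r_1/2}$, so vanishing of its character at $w_0$ requires that $w_0$ cannot be conjugated into the \emph{larger} open compact-mod-center set $E_1^* G_{x_1,r_1/2}$, not merely into $E_1^*$. Elements of $E_1^* G_{x_1,r_1/2}$ close to the center can certainly be ramified-elliptic, so your weaker observation does not by itself force the character to vanish. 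The paper closes this cleanly by comparing valuations of determinants: for $w_0\in E^*$ with $n(w_0)=0$, the valuation $v_F(\det w_0)=v_E(w_0)$ is odd, whereas $\det(E_1^*G_{x_1,r_1/2})\subset N_{E_1/F}(E_1^*)\cdot(1+\mathfrak{p}_F)$ has only even valuations since $E_1/F$ is unramified; since determinant is a class function, $w_0$ has no conjugate in $E_1^*G_{x_1,r_1/2}$ and the character vanishes identically on the $n(w)=0$ set. Your alternative plan --- invoking Silberger's closed formulas and ``comparing $\chi$- and $\chi_1$-dependence'' --- is not actually carried out and, as written, does not substitute for this vanishing argument; it is the one step you would need to fill in before the proof is complete.
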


\begin{proof}
By comparing valuations of determinants of elements, one can show that since the inducing representation of the representation coming from $E_1$ is $E_1^* G_{x_1,r_1/2}$ (where the point $x_1$ comes from $E_1$ and the depth $r_1$ comes from $\chi_1$.  See \cite[page 35]{debacker}), then if $w \in E^* : n(w) = 0$, then one can't conjugate $w$ into $E_1^* G_{x_1,r_1/2}$ .  Therefore, $\theta_{\pi_{\chi_1 {}_F \mu_{\chi_1}}}(w) = 0$.

Suppose by way of contradiction that $\theta_{\pi_{\chi {}_F \mu_{\chi}}}(w) = 0 \ \forall w \in E^* : n(w) = 0$.  Then $\chi(w) + (-1, \zeta_E) \chi(\overline{w}) = 0 \ \forall w \in E^* : n(w) = 0.$  Thus, $\chi(w/\overline{w})^2 = 1 \ \forall w \in E^* : n(w) = 0$.  But since the set $\{w \in E^* : n(w) = 0 \}$ generates all of $E^*$ as a group, we get that $\chi|_{(E^*)^2} = \chi^{\upsilon}|_{(E^*)^2}$.  Now since $1 + \mathfrak{p}_E \subset (E^*)^2$, we get $\chi|_{1 + \mathfrak{p}_E} = \chi^{\upsilon}|_{1 + \mathfrak{p}_E}$ , which contradicts the fact that $(E/F, \chi)$ is an admissible pair.
\end{proof}

\begin{proposition}\label{case2}
Suppose $(E/F, \chi)$ and $(E_1/F, \chi_1)$ are admissible pairs with $E \ncong E_1$, with $E$ unramified and $E_1$ ramified.  Then $\exists w \in E^* : n(w) = 0$ such that $\theta_{\pi_{\chi {}_F \mu_{\chi}}}(w) \neq \theta_{\pi_{\chi_1 {}_F \mu_{\chi_1}}}(w)$.
\end{proposition}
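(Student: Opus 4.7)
The proof follows the two-stage structure of Proposition \ref{case1}: first show that $\theta_{\pi_{\chi_1 {}_F \mu_{\chi_1}}}(w) = 0$ for every $w \in E^*$ with $n(w) = 0$, then derive a contradiction from assuming $\theta_{\pi_{\chi {}_F \mu_{\chi}}}$ also vanishes on this set. Any $w$ where the latter is nonzero will then yield the claimed inequality.

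For the first stage, note that $n(w) = 0$ forces $w \notin F^*$, so $w$ is regular in $\mathrm{GL}(2, F)$ and its characteristic polynomial over $F$ is irreducible with splitting field $E$. Since $E \ncong E_1$, $w$ cannot be $G(F)$-conjugate to any $t \in E_1^*$: conjugate matrices share characteristic polynomials, and every non-central $t \in E_1^*$ has splitting field $E_1$. The $n(w) = 0$ case of Theorem \ref{DeBacker1} then forces $\theta_{\pi_{\chi_1 {}_F \mu_{\chi_1}}}(w) = 0$. This replaces the determinant-valuation comparison used in Proposition \ref{case1}, which does not apply directly here since both $E^*$ and $E_1^*$ contribute determinants of every integer valuation.

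For the second stage, suppose by contradiction that $\theta_{\pi_{\chi {}_F \mu_{\chi}}}(w) = 0$ for every $w \in E^*$ with $n(w) = 0$. Using the character formula of Section \ref{theproofofcharacters} together with $(-1, \zeta_E) = 1$ (as $E/F$ is unramified), we deduce $\chi(w) + \chi(\overline{w}) = 0$, i.e.\ $\chi(w/\overline{w}) = -1$, for every such $w$. Set $\eta := \chi \cdot (\chi^{\upsilon})^{-1}$, where $\upsilon$ generates $\mathrm{Gal}(E/F)$. Then $\eta$ is a character of $E^*$ taking value $-1$ on the generating set $\{w : n(w) = 0\}$; squaring gives $\eta^2 \equiv 1$, so $\eta$ is $\{\pm 1\}$-valued. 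Clearly $\eta|_{F^*} \equiv 1$, and $\eta|_{1 + \mathfrak{p}_E} \equiv 1$ as well: if $u \in \mathfrak{o}_E^* \setminus \mathfrak{o}_F^*(1 + \mathfrak{p}_E)$ (so $n(u) = 0$) and $w \in 1 + \mathfrak{p}_E$, then $n(uw) = 0$ too, whence $-1 = \eta(uw) = \eta(u)\eta(w) = -\eta(w)$ forces $\eta(w) = 1$.

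Consequently $\eta$ factors through $E^*/F^*(1 + \mathfrak{p}_E) \cong k_E^*/k_F^*$, a cyclic group of order $q + 1 \geq 4$, and takes value $-1$ on every non-identity element. But if $g$ generates this group then $\eta(g^2) = \eta(g)^2 = 1$ while $g^2 \neq 1$, a contradiction. The main difficulty, and point of departure from Proposition \ref{case1}, is that for $E/F$ unramified the admissibility of $(E/F, \chi)$ no longer forbids $\chi|_{1 + \mathfrak{p}_E}$ from being Galois-invariant, so the admissibility-based contradiction of Proposition \ref{case1} is unavailable; one must instead exploit the sharper identity $\chi/\chi^{\upsilon} \equiv -1$ (not merely $\equiv \pm 1$) on $\{n(w) = 0\}$ together with the fact that $k_E^*/k_F^*$ is cyclic of order at least $4$.
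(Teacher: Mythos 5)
Your proposal breaks into the same two stages as the paper, but there is a genuine gap in stage one. The issue is that the inducing subgroup for the supercuspidal $\pi_{\chi_1\,{}_F\mu_{\chi_1}}$ is $E_1^*\,G_{x_1,r_1/2}$, not just $E_1^*$. Your characteristic-polynomial argument shows that $w$ is not $G(F)$-conjugate to any element of $E_1^*$, but the Frobenius formula makes $\theta_{\pi_{\chi_1\,{}_F\mu_{\chi_1}}}(w)$ a sum over conjugates of $w$ that land in the \emph{full} inducing group $E_1^*\,G_{x_1,r_1/2}$, and that group contains plenty of regular semisimple elements whose centralizer is isomorphic to $E^*$, not $E_1^*$ (e.g.\ elements of the form $1+\epsilon\delta$ with $\epsilon$ small). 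So non-conjugacy to $E_1^*$ does not by itself force the character to vanish. The clause of Theorem \ref{DeBacker1} you cite is stated only under the hypothesis $w={}^gt$ with $t\in G'$; it is silent about the complementary case, and you cannot read off vanishing from it. The paper's own proof is careful about this: it argues non-conjugacy into $E_1^*\,G_{x_1,r_1/2}$ by comparing determinant valuations, but that argument breaks down for elements of form (ii) (where the norm can be either a norm or a non-norm from $E_1$), and it also fails entirely when $(-1,p)=-1$; the paper is therefore forced to split into the two subcases $(-1,p)=\pm1$ and to invoke Shimizu's vanishing result to cover the elements the determinant comparison misses. To repair your argument you would need something like a topological Jordan decomposition: for $w$ with $n(w)=0$ the topologically semisimple part $w_s$ is a nontrivial root of unity of order dividing $q^2-1$ but not $q-1$, while the topologically semisimple part of any element of $E_1^*\,G_{x_1,r_1/2}$ lies in $E_1^*$ and hence (for $E_1/F$ ramified) has order dividing $q-1$; that actually would give the needed non-conjugacy into the larger group uniformly, with no case split and no appeal to Shimizu, but it is not what you wrote.

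Your stage two, by contrast, is correct and is a genuinely different route from the paper's. The paper (Lemma \ref{randomlemma1} and its continuation) deduces $\chi|_{F^*(1+\mathfrak{p}_E)}=\chi^{\upsilon}|_{F^*(1+\mathfrak{p}_E)}$ from the vanishing, then multiplies suitable elements of forms (i) and (ii) to produce a $z$ of form (ii) with $\chi(z)=\chi(\overline z)$, hence $\theta(z)=2\chi(z)\neq0$. You instead package the hypothesis into the auxiliary character $\eta=\chi/\chi^{\upsilon}$, show $\eta$ is trivial on $F^*(1+\mathfrak{p}_E)$ and hence descends to the cyclic group $k_E^*/k_F^*$ of order $q+1\geq4$, and observe that no character of such a group can take the value $-1$ at every non-identity element. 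Both are valid; your version is slightly more structural, and your remark that the admissibility contradiction used in Proposition \ref{case1} is unavailable here (because admissibility for unramified $E$ does not forbid Galois-invariance of $\chi|_{1+\mathfrak{p}_E}$) correctly identifies why a new argument is needed. But the stage-one gap means the proof as written is not complete.
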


We will need to split this proposition into two cases: $(-1,p) = 1$ and $(-1,p) = -1$.  We have $E = F(\sqrt{\zeta})$, where $\zeta \in \mathfrak{o}_F^*$ is not a square, and without loss of generality $E_1 = F(\sqrt{p})$.

\begin{lemma}\label{subcasea''}
Suppose $(-1,p) = 1$.  Suppose $(E/F, \chi)$ and $(E_1/F, \chi_1)$ are admissible pairs with $E$ unramified and $E_1$ ramified.  Then $\exists w \in E^* : n(w) = 0$ such that $\theta_{\pi_{\chi {}_F \mu_{\chi}}}(w) \neq \theta_{\pi_{\chi_1 {}_F \mu_{\chi_1}}}(w)$.
\end{lemma}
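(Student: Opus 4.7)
The plan is to show that $\theta_{\pi_{\chi_1 {}_F \mu_{\chi_1}}}$ vanishes identically on $\{w\in E^*:n(w)=0\}$, and then that $\theta_{\pi_{\chi {}_F \mu_{\chi}}}$ is nonzero on at least one such $w$; the two characters must differ at that point.

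For the vanishing on the ramified side, I would adapt the determinant-parity argument hinted at in the proof of Proposition \ref{case1}. Using the decompositions in Section \ref{decompositions} for $E/F$ unramified, a direct case check shows that every $w \in E^*$ with $n(w)=0$ has $v_F(N_{E/F}(w))$ even: in the cases $w = p^m v \sqrt{\zeta}$ and $w = p^n u + p^m v \sqrt{\zeta}$ with $n > m$ one reads off $v_F(\det w) = 2m$, while if $n = m$ one needs that $\bar\zeta$ is a non-square in $k_F$ to conclude that $u^2 - v^2 \zeta \in \mathfrak{o}_F^*$, giving $v_F(\det w) = 2n$. A parallel check for $E_1 = F(\sqrt{p})$ produces $v_F(N_{E_1/F}(t))$ odd for every $t \in E_1^*$ with $n(t) = 0$, because the ramified norm $a^2 - b^2 p$ carries the extra factor of $p$. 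Since conjugation in $\mathrm{GL}(2,F)$ preserves both the determinant and the parameter $n$, no $w \in E^*$ with $n(w) = 0$ is conjugate to any $t \in E_1^*$ with $n(t) = 0$, so Theorem \ref{DeBacker1} (the $n(w) = 0$ formula) forces $\theta_{\pi_{\chi_1 {}_F \mu_{\chi_1}}}(w) = 0$.

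For the nonvanishing on the unramified side, I would use the simplified formula
$$\theta_{\pi_{\chi {}_F \mu_{\chi}}}(w) = \epsilon(\tilde\chi, \Delta^+, w)\,\frac{\chi(w) + (-1,\zeta)\chi(\bar w)}{\tilde\tau((w - \bar w)/(2\sqrt{\zeta}))}$$
from Section \ref{theproofofcharacters}, together with the observation that $(-1,\zeta) = 1$ since $E/F$ unramified makes every unit a norm from $E$. If this expression vanished for every $w$ with $n(w) = 0$, then because $\epsilon \neq 0$ we would have $\chi(w) = -\chi(\bar w)$ on the whole set. I would produce a concrete counterexample by taking $w_1 = \sqrt{\zeta}$, $w_2 = 1 + \sqrt{\zeta}$, and noting $w_1 w_2 = \zeta + \sqrt{\zeta}$; each of these three is verified to lie outside $F^*(1 + \mathfrak{p}_E)$ by the explicit lemmas of Section \ref{decompositions}. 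Multiplicativity of $\chi$ then yields $\chi(w_1 w_2) = \chi(w_1)\chi(w_2) = \chi(\bar w_1)\chi(\bar w_2) = \chi(\overline{w_1 w_2})$, whereas the assumed vanishing also forces $\chi(w_1 w_2) = -\chi(\overline{w_1 w_2})$. Combined, $\chi(\overline{w_1 w_2}) = 0$, contradicting the fact that $\chi$ is a character. Hence some $w \in E^*$ with $n(w) = 0$ has $\theta_{\pi_{\chi {}_F \mu_{\chi}}}(w) \neq 0 = \theta_{\pi_{\chi_1 {}_F \mu_{\chi_1}}}(w)$.

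The main obstacle is the determinant-parity step: the argument requires that one truly never has a $\mathrm{GL}(2,F)$-conjugacy between $n=0$ elements of Cartans with different ramification, and this rests on the careful case analysis above together with the fact that $\zeta \in \mathfrak{o}_F^*$ is specifically a non-square unit. The hypothesis $(-1,p) = 1$ does not obviously enter the route I have sketched; I expect it either simplifies a direct comparison of the $\epsilon$-constants on the two sides, or mirrors the structural partition used in the companion lemma that will handle $(-1,p) = -1$.
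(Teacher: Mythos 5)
The nonvanishing half of your argument is fine and a bit cleaner than the paper's: instead of first proving $\chi|_{F^*(1+\mathfrak{p}_E)} = \chi^\upsilon|_{F^*(1+\mathfrak{p}_E)}$ as an intermediate step (Lemma~\ref{randomlemma1}) and then deriving a contradiction on a type~(ii) element, you produce three explicit elements $w_1, w_2, w_1 w_2$ of the $n=0$ set and let multiplicativity of $\chi$ do the work in one stroke; that is valid.

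The vanishing half has a genuine gap, and it is the same gap you flagged yourself at the end. The parity of $v_F(\det w)$ cannot rule out conjugation into the inducing subgroup. What needs to be shown (and what Proposition~\ref{case1} actually shows by the determinant argument) is that $w$ is not $G$-conjugate into $E_1^* G_{x_1,r_1/2}$, because the support of a compactly-induced supercuspidal character is the set of $G$-conjugates of the inducing subgroup. But $\det(E_1^* G_{x_1,r_1/2}) = N_{E_1/F}(E_1^*)$ (using $1+\mathfrak{p}_F \subset N_{E_1/F}(E_1^*)$), and the norm group of a ramified quadratic extension contains elements of \emph{both} parities of valuation (e.g.\ $N(\sqrt{p}) = -p$ and $N(u) = u^2$). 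So the fact that $v_F(\det w)$ is even is perfectly compatible with $\det w$ being a norm from $E_1$, and the parity comparison you make — restricted only to $t \in E_1^*$ with $n(t)=0$ — does not feed into any quotable formula: as the paper quotes it, Theorem~\ref{DeBacker1} tells you what $\theta(w)$ is \emph{when} $w = {}^g t$ with $t \in G'$; it says nothing when $w$ is not conjugate into $G'$, so one must argue via the inducing subgroup directly. (The claim ``conjugation preserves $n$'' is also not really available; in fact no regular $w \in E^*$ is conjugate to a regular $t \in E_1^*$ at all, simply because their centralizers would be $G$-conjugate tori, forcing $E^* \cong E_1^*$, so the comparison you make is vacuous rather than the useful one.)

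The paper's route instead computes the Hilbert symbol: for $w$ of types~(i) and~(iii), $\det w = -p^{2m}v^2\zeta(1+\text{small})$ and $(\det w, p) = (-1,p)(\zeta,p) = -(-1,p)$, which is $-1$ exactly when $(-1,p)=1$; this is where the hypothesis enters, and why the companion Lemma~\ref{subcaseb''} handles $(-1,p)=-1$ by citing Shimizu instead. Even so, the Hilbert symbol is inconclusive for type~(ii), where $(\det w, p) = (u^2 - v^2\zeta, p)$ can be $\pm 1$; for those elements the paper appeals to \cite[Proposition~2]{shimizu}. Your proof never separates out the type~(ii) elements or invokes Shimizu, so this case is unaddressed. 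To repair the argument you would need to (a)~replace parity by the Hilbert-symbol comparison against $N_{E_1/F}(E_1^*)$ and invoke the support theorem for compactly-induced characters, which covers types~(i) and~(iii) under the hypothesis $(-1,p)=1$, and (b)~cite Shimizu's vanishing result to cover type~(ii).
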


\proof
We use the same strategy as in proposition \ref{case1}.  Recall that $w \in E^* : n(w) = 0$ if and only if
(i) $w = p^m v \delta$,
(ii) $w = p^n u + p^m v \delta, \ n = m$, or
(iii) $w = p^n u + p^m v \delta, \ n > m$.

By comparing determinants again, one can show that if $w$ is of the form (i) or (iii), then $w$ can't be conjugated into $E_1^* G_{x_1,r_1/2}$ (this is easy to show when $(-1,p) = 1$.  We will say something else when we consider the case $(-1,p) = -1$). Therefore, $\theta_{\pi_{\chi_1 {}_F \mu_{\chi_1}}}(w) = 0$ for all $w$ of the form (i) and (iii).  It is also the case that $\theta_{\pi_{\chi_1 {}_F \mu_{\chi_1}}}(w) = 0$ for all $w$ of the form (ii), by \cite[Proposition 2]{shimizu}.  Therefore, $\theta_{\pi_{\chi_1 {}_F \mu_{\chi_1}}}(w) = 0$ for all $w$ such that $n(w) = 0$.

Assume that $\theta_{\pi_{\chi {}_F \mu_{\chi}}}(w) = 0$ for all $w$ such that $n(w) = 0$.  We will now find a contradiction.  We need the following lemma.

\begin{lemma}\label{randomlemma1}
$\chi|_{F^* (1 + \mathfrak{p}_E)} = \chi^{\upsilon}|_{F^* (1 + \mathfrak{p}_E)}$, where $\upsilon$ generates $\mathrm{Aut}(E/F)$.
\end{lemma}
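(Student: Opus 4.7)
The plan is to extract a functional equation for $\chi$ from the vanishing hypothesis and then use a simple group-theoretic trick, exploiting that $F^*(1+\mathfrak{p}_E)$ is a proper subgroup of $E^*$, to spread this relation to all of $F^*(1+\mathfrak{p}_E)$.

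First I would specialize the explicit character formula recalled just before the lemma,
\[
\theta_{\pi_{\chi {}_F \mu_{\chi}}}(w) \;=\; \epsilon(\tilde\chi,\Delta^+,w)\,\frac{\chi(w) + (-1,\zeta)\,\chi(\overline{w})}{\tilde\tau\!\left(\tfrac{w-\overline{w}}{2\delta}\right)},
\]
to the present setting. Because $E/F$ is unramified we have $(-1,\zeta)=1$, and since $\epsilon(\tilde\chi,\Delta^+,w)$ and $\tilde\tau\!\left(\tfrac{w-\overline{w}}{2\delta}\right)$ are nonzero, the standing assumption $\theta_{\pi_{\chi {}_F \mu_{\chi}}}(w)=0$ for every $w\in E^*$ with $n(w)=0$ forces
\[
\chi(w) \;=\; -\,\chi(\overline{w}) \qquad \text{for every } w\in E^*\setminus F^*(1+\mathfrak{p}_E),
\]
using that $n(w)=0$ is equivalent to $w\notin F^*(1+\mathfrak{p}_E)$.

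Next I would fix one element $w_0 \notin F^*(1+\mathfrak{p}_E)$, for instance $w_0=\delta$, which is valid since the uniformizer $\delta$ of $E$ generates a different coset. Given an arbitrary $x\in F^*(1+\mathfrak{p}_E)$, the product $xw_0$ again lies outside $F^*(1+\mathfrak{p}_E)$, because that set is a subgroup of $E^*$ that does not contain $w_0$. Applying the displayed identity to both $w_0$ and $xw_0$ yields
\[
\chi(w_0) = -\chi(\overline{w_0}) \quad\text{and}\quad \chi(x)\chi(w_0) = -\chi(\overline{x})\chi(\overline{w_0}).
\]
Dividing the second equation by the first (legal because character values are nonzero) gives $\chi(x)=\chi(\overline{x})=\chi^{\upsilon}(x)$, which is the desired equality.

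I do not foresee a genuine obstacle here: the only things to verify carefully are (i) the nonvanishing of the prefactor in the character formula, which is immediate, and (ii) that $F^*(1+\mathfrak{p}_E)$ is a subgroup and that an element outside it — like $\delta$ — is available, both of which are clear. The argument is essentially a one-line consequence of the character formula in the unramified case, set up so that the ``sign relation'' on $E^*\setminus F^*(1+\mathfrak{p}_E)$ propagates into an invariance relation on the complementary subgroup $F^*(1+\mathfrak{p}_E)$ after multiplying by a fixed element outside.
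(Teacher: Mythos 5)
Your argument is correct and reaches the lemma by the same essential mechanism as the paper: derive the ``sign relation'' $\chi(w) = -\chi(\overline{w})$ on $E^* \setminus F^*(1+\mathfrak{p}_E)$ from the vanishing hypothesis (using that $(-1,\zeta)=1$ unramified and that the prefactor $\epsilon(\tilde\chi,\Delta^+,w)\,\tilde\tau(\cdot)^{-1}$ is nonzero), and then combine two instances of it so that the signs cancel and an invariance relation under $\upsilon$ pops out on the complementary subgroup. Where the paper does this by writing an arbitrary $z\in 1+\mathfrak{p}_E$ as $z = w_1\overline{w_2}$ with $w_1$ of form~(i), $w_2$ of form~(iii), and cross--multiplying the two sign relations, you fix one element $w_0$ outside $F^*(1+\mathfrak{p}_E)$ and pair $w_0$ with $xw_0$; this sidesteps the need to exhibit and verify the paper's specific product decomposition, and handles $F^*$ and $1+\mathfrak{p}_E$ simultaneously rather than remarking separately that $\chi$ is trivially $\upsilon$-invariant on $F^*$. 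So your route is a genuinely cleaner implementation of the same idea.

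One small correction: in this case $E/F$ is \emph{unramified}, so $\delta = \sqrt{\zeta}$ with $\zeta\in\mathfrak{o}_F^*$ a non-square unit, and $\delta$ is a \emph{unit} of $\mathfrak{o}_E$, not a uniformizer of $E$. Your conclusion that $\delta \notin F^*(1+\mathfrak{p}_E)$ still holds, but for a different reason than you gave: e.g.\ because $\delta/\overline{\delta} = -1$, whereas any $w\in F^*(1+\mathfrak{p}_E)$ satisfies $w/\overline{w}\in 1+\mathfrak{p}_E$, and $-1\notin 1+\mathfrak{p}_E$ in odd residual characteristic. With that justification substituted, the proof is complete.
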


\proof
Let $z \in 1 + \mathfrak{p}_E$.  It is easy to see that one can write $z = w_1 \overline{w_2}$, where $w_1$ is of the form (i) and $w_2$ is of the form (iii).  Now, since $\theta_{\pi_{\chi {}_F \mu_{\chi}}}(w) = 0$ on all $w$ of the form (i) and (iii), we have $$\chi(w_1) + \chi(\overline{w_1}) = 0$$ $$\chi(w_2) + \chi(\overline{w_2}) = 0.$$  Multiplying the first equation by $\chi(w_2)$ and the second equation by $\chi(w_1)$, we conclude that $$\chi(w_2 \overline{w_1}) = \chi(w_1 \overline{w_2}).$$  Therefore, $\chi(z) = \chi(\overline{z}) \ \forall z \in 1 + \mathfrak{p}_E$.  Since $\chi(x) = \chi(\overline{x}) \ \forall x \in F^*$, we get $$\chi|_{F^* (1 + \mathfrak{p}_E)} = \chi^{\upsilon}|_{F^* (1 + \mathfrak{p}_E)}. \qedhere $$

By a similar argument as in the proof of lemma \ref{randomlemma1}, we have that since $\theta_{\pi_{\chi {}_F \mu_{\chi}}}(w) = 0 \ \forall w \in E^* : n(w) = 0$, then $\chi(w_1 \overline{w_2}) = \chi(\overline{w_1} w_2) \ \forall w_1, w_2 \in E^* : n(w_1) = n(w_2) = 0$.  Now, if $w_1$ is of the form (i) and $w_2$ is of the form (ii), then $w_1 \overline{w_2}$ is of the form (ii).  This shows that $\chi(z) = \chi(\overline{z})$ for some element $z$ of the form (ii).  Therefore, $\chi(z) + \chi(\overline{z}) = 2 \chi(z) \neq 0$, which says that $\theta_{\pi_{\chi {}_F \mu_{\chi}}}(z) \neq 0$.  But since $n(z) = 0$, we have contradicted our assumption that $\theta_{\pi_{\chi {}_F \mu_{\chi}}}(w) = 0$ for all $w$ such that $n(w) = 0$.  Thus, we have shown that there must exist an element $w \in E^*$ such that $n(w) = 0$ and $\theta_{\pi_{\chi {}_F \mu_{\chi}}}(w) \neq 0$.  Finally, we are done with proving lemma \ref{subcasea''}.
\qed

\begin{lemma}\label{subcaseb''}
Suppose $(-1,p) = -1$.  Suppose $(E/F, \chi)$ and $(E_1/F, \chi_1)$ are admissible pairs with $E$ unramified and $E_1$ ramified.  Then $\exists w \in E^* : n(w) = 0$ such that $\theta_{\pi_{\chi {}_F \mu_{\chi}}}(w) \neq \theta_{\pi_{\chi_1 {}_F \mu_{\chi_1}}}(w)$.
\end{lemma}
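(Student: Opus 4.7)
My plan is to extract a single ``witness'' element on which the two characters disagree, rather than proceeding by contradiction as in Lemma \ref{subcasea''}. When $(-1,p) = -1$ we may take $\zeta = -1$, so that $E = F(i)$; the determinants of type (i) and (iii) elements then lie in $F^{*2} \subseteq N_{E_1/F}(E_1^*)$, and the determinant obstruction used in Lemma \ref{subcasea''} collapses. This is precisely why a new argument is needed.

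I would first compute $\theta_{\pi_{\chi {}_F \mu_\chi}}(w)$ on an arbitrary type (i) element $w = p^m v \delta$ with $\delta = i$. Since $\overline{w} = -w$ and since $E/F$ is unramified we have $\chi(-1) = \aleph_{E/F}(-1) = 1$; using $(-1,-1)_F = 1$ in odd residual characteristic, the numerator of the character formula simplifies to
\[
\chi(w) + (-1,\zeta)\chi(\overline{w}) \;=\; \chi(w)\bigl(1 + (-1,-1)\bigr) \;=\; 2\chi(w) \;\neq\; 0,
\]
while the denominator $\tilde{\tau}\bigl((w-\overline{w})/(2\delta)\bigr) = \tilde{\tau}(p^m v)$ and the prefactor $\epsilon(\tilde{\chi},\Delta^+,w)$ are both manifestly nonzero. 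Hence $\theta_{\pi_{\chi {}_F \mu_\chi}}(w) \neq 0$ for every type (i) element.

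Next I would appeal to Shimizu's Proposition 2 in its general form: the character of a tame supercuspidal representation of $\mathrm{GL}(2,F)$ attached to the Cartan $E_1^*$ vanishes on every regular element whose centralizer is not $G(F)$-conjugate to $E_1^*$. Since $E \not\cong E_1$, the maximal tori $E^*$ and $E_1^*$ are not $G(F)$-conjugate, so $\theta_{\pi_{\chi_1 {}_F \mu_{\chi_1}}}$ vanishes identically on the regular set of $E^*$, in particular on every type (i) element. Combining the two steps, any type (i) $w \in E^*$ satisfies $\theta_{\pi_{\chi {}_F \mu_\chi}}(w) \neq 0 = \theta_{\pi_{\chi_1 {}_F \mu_{\chi_1}}}(w)$, establishing the lemma.

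The principal obstacle is the exact scope of Shimizu's Proposition 2 as stated in \cite{shimizu}: in Lemma \ref{subcasea''} the author cited it only for type (ii), handling types (i) and (iii) by the determinant argument (which fails here). If Shimizu's statement turns out to be more restrictive than required, the fallback would be to prove directly that no $G(F)$-conjugate of a type (i) element $w$ lies in $E_1^* G_{x_1,r_1/2}$: the eigenvalues $\pm p^m v\,i$ of $w$ lie outside $E_1$ (since $(-1,p) = -1$ forces $\sqrt{-p} \notin E_1$), while the topologically semisimple part of any element of $E_1^* G_{x_1,r_1/2}$ is $G(F)$-conjugate to an element of $E_1^*$ and so has eigenvalues in $E_1$. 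Making this precise in the presence of the pro-unipotent radical $G_{x_1,r_1/2}$ would require a Krasner/Hensel-type perturbation argument, and this is the main technical hurdle.
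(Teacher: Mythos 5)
Your key idea---exhibiting a type~(i) element as a direct witness rather than arguing by contradiction---is attractive, and as far as your second step goes there is no issue: the paper's own proof of Lemma~\ref{subcaseb''} cites \cite[Proposition~2]{shimizu} for elements of all three shapes (i), (ii), (iii) to get vanishing of $\theta_{\pi_{\chi_1\,{}_F\mu_{\chi_1}}}$ on $\{w\in E^*:n(w)=0\}$, so your worry about the scope of Shimizu's proposition is unfounded and the Krasner/Hensel fallback is unnecessary.

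The genuine gap is in your first step. You assert $\chi(-1)=\aleph_{E/F}(-1)=1$, but the lemma is stated (and applied in Theorem~\ref{differentcartans}) for an \emph{arbitrary} admissible pair $(E/F,\chi)$ of $\mathrm{GL}(2,F)$; the normalisation $\chi|_{F^*}=\aleph_{E/F}$ is specific to the $\mathrm{PGL}(2,F)$ parametrisation and is not among the hypotheses here. For a general admissible $\chi$ one only has $\chi(-1)=\pm1$, and if $\chi(-1)=-1$ then on a type~(i) element $w$ (where $\overline w=-w$) the numerator of the character formula is $\chi(w)+(-1,\zeta)\chi(\overline w)=\chi(w)\bigl(1+\chi(-1)\bigr)=0$, so your proposed witness vanishes. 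This is precisely why the paper instead routes the proof through the by-contradiction argument of Lemma~\ref{subcasea''}: assuming $\theta_{\pi_{\chi\,{}_F\mu_\chi}}$ vanishes on all of $n(w)=0$, it proves Lemma~\ref{randomlemma1} by combining the type~(i) and type~(iii) relations multiplicatively, and then produces a type~(ii) element $z=w_1\overline{w_2}$ with $\chi(z)=\chi(\overline z)$, so that the numerator there is $2\chi(z)\neq0$, a contradiction. That argument is insensitive to the value of $\chi(-1)$. To rescue your shorter route you would need either to add the hypothesis $\chi|_{F^*}=\aleph_{E/F}$ (which suffices for the paper's eventual application to $\mathrm{PGL}(2,F)$ but does not prove the lemma as stated) or to treat the case $\chi(-1)=-1$ separately, which appears to force you back to a contradiction-style argument using type~(ii) elements after all.
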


\begin{proof}

Recall again that $w \in E^* : n(w) = 0$ if and only if either (i) $w = p^m v \delta$, (ii) $w = p^n u + p^m v \delta, \ n = m$, or (iii) $w = p^n u + p^m v \delta, \ n > m$.

We want to show like in lemma \ref{subcasea''} that $\theta_{\pi_{\chi_1 {}_F \mu_{\chi_1}}}$ vanishes on elements of the form (i), (ii), and (iii).  After showing this, the rest of the proof of lemma \ref{subcaseb''} goes exactly the same way as in lemma \ref{subcasea''}.  But \cite[Proposition 2]{shimizu} shows that $\theta_{\pi_{\chi_1 {}_F \mu_{\chi_1}}}(w) = 0$ for all $w \in E^*$ such that $w$ is of the form (i),(ii), or (iii).
\end{proof}

Therefore, we have finished the proof of proposition \ref{case2}.

\begin{proposition}\label{case3}
Suppose $(E/F, \chi)$ and $(E_1/F, \chi_1)$ are admissible pairs with $E \ncong E_1$, with $E$ ramified and $E_1$ ramified.  Then $\exists w \in E^* : n(w) = 0$ such that $\theta_{\pi_{\chi {}_F \mu_{\chi}}}(w) \neq \theta_{\pi_{\chi_1 {}_F \mu_{\chi_1}}}(w)$.
\end{proposition}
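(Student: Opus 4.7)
The plan is to mirror the proofs of Propositions \ref{case1} and \ref{case2}: first show that $\theta_{\pi_{\chi_1 {}_F \mu_{\chi_1}}}$ vanishes on every $w \in E^*$ with $n(w) = 0$, and then show that $\theta_{\pi_{\chi {}_F \mu_{\chi}}}$ cannot vanish on all such $w$. Picking a $w$ where the first character is zero but the second is nonzero then produces the desired element.

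For the first step, I would write $E = F(\sqrt{u_E p})$ and $E_1 = F(\sqrt{u_{E_1} p})$ with $u_E, u_{E_1} \in \mathfrak{o}_F^*$; the hypothesis $E \ncong E_1$ forces $u_E/u_{E_1}$ to be a non-square unit. The classification in Section \ref{decompositions} shows that every $w \in E^*$ with $n(w) = 0$ either has the form $w = p^m v \sqrt{u_E p}$ with $v \in \mathfrak{o}_F^*$, or the form $w = p^n a + p^m b \sqrt{u_E p}$ with $a, b \in \mathfrak{o}_F^*$ and $n > m$. A direct computation of $\det(w) = N_{E/F}(w)$ in each case shows that $\det(w)$ lies in the coset $-u_E p \cdot F^{*2}$ of $F^*/F^{*2}$. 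On the other hand, $\pi_{\chi_1 {}_F \mu_{\chi_1}}$ is compactly induced from $E_1^* G_{x_1, r_1/2}$, so its distribution character is supported on the $\mathrm{GL}(2,F)$-conjugates of this subgroup, and conjugation preserves determinants. Since $\det(G_{x_1, r_1/2}) \subset 1 + \mathfrak{p}_F \subset F^{*2}$, the determinant of any element of $E_1^* G_{x_1, r_1/2}$ lies in $N_{E_1/F}(E_1^*) = F^{*2} \cup (-u_{E_1} p) F^{*2}$. But $-u_E p$ has odd $F$-valuation, so is not a square, and $-u_E p / (-u_{E_1} p) = u_E/u_{E_1}$ is a non-square by hypothesis, so $\det(w) \notin N_{E_1/F}(E_1^*)$. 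Hence $w$ is not conjugate into the support of $\pi_{\chi_1 {}_F \mu_{\chi_1}}$, giving $\theta_{\pi_{\chi_1 {}_F \mu_{\chi_1}}}(w) = 0$.

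For the second step, I would argue exactly as in the final paragraph of Proposition \ref{case1}. If $\theta_{\pi_{\chi {}_F \mu_{\chi}}}(w) = 0$ for every $w \in E^*$ with $n(w) = 0$, then the character formula of Section \ref{theproofofcharacters} forces $\chi(w) + (-1, \zeta_E)\chi(\overline{w}) = 0$, and squaring gives $\chi(w/\overline{w})^2 = 1$. Since $\{w \in E^* : n(w) = 0\}$ generates $E^*$ as a group, this yields $\chi|_{(E^*)^2} = \chi^{\upsilon}|_{(E^*)^2}$. Using $1 + \mathfrak{p}_E \subset (E^*)^2$ (valid in odd residual characteristic by Hensel's lemma), we conclude $\chi|_{1 + \mathfrak{p}_E} = \chi^{\upsilon}|_{1 + \mathfrak{p}_E}$, i.e.\ $\chi|_{1+\mathfrak{p}_E}$ factors through $N_{E/F}$. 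Since $E/F$ is ramified, this contradicts the admissibility of $(E/F,\chi)$. Combining the two steps produces a $w$ at which the two characters disagree.

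The main obstacle is the determinant bookkeeping of the first step. Because both $E$ and $E_1$ are ramified, the Cartans $E^*$ and $E_1^*$ produce determinants of the same parity of valuation, so the naive ``even versus odd valuation'' obstruction that suffices in Propositions \ref{case1} and \ref{case2} (and which is refined by the Hilbert symbol $(-1,p)$ in Lemmas \ref{subcasea''}--\ref{subcaseb''}) is no longer available. One must instead work at the finer level of $F^*/F^{*2}$, using the explicit description of $N_{E_1/F}(E_1^*)$ for a ramified quadratic extension and the fact that non-isomorphic quadratic extensions of $F$ give distinct norm subgroups of $F^*$. Once this $F^*/F^{*2}$-calculation is in hand, the remainder of the argument is a routine transcription of the earlier propositions.
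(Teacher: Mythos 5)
Your proof is correct and follows essentially the same strategy as the paper: reduce to showing that $\det(w)$ for $w \in E^*$ with $n(w)=0$ lies outside $N_{E_1/F}(E_1^*)$, hence $\theta_{\pi_{\chi_1\,{}_F\mu_{\chi_1}}}$ vanishes there, and then rerun the nonvanishing argument from Proposition~\ref{case1}. The only cosmetic difference is that you phrase the obstruction via the square-class decomposition $N_{E_1/F}(E_1^*) = F^{*2}\cup(-u_{E_1}p)F^{*2}$ rather than normalizing $E=F(\sqrt{p})$ and computing the Hilbert symbol $(\det(w),\zeta p)=-1$ as the paper does; the two formulations are equivalent.
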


\begin{proof}
Suppose without loss of generality that $E = F(\sqrt{p})$ and $E_1 = F(\sqrt{\zeta p})$, $\zeta \in \mathfrak{o}_F^*$ not a square.  We first claim that elements $w \in E^*$ such that $n(w) = 0$ can't be conjugated into $E_1^* G_{x_1,r_1/2}$.

Recall that $w \in E^*$ such that $n(w) = 0$ if and only if either $w = p^n u + p^m v \sqrt{p}$ with $n > m$, $u,v \neq 0$, or $w = p^m v \sqrt{p}$ with $v \neq 0$.  Suppose $w = p^m v \sqrt{p}$. Then $\mathrm{det}(w) = N(w) = - p^{2m+1} v^2$.  Note that $\mathrm{det}(G_{x_1,r_1/2}) = 1 + \mathfrak{p}_F$ and $\mathrm{det}(E_1^* G_{x_1, r_1/2}) = N_{E_1 / F}(E_1^*)$.  Well, $(\mathrm{det}(w), \zeta p) = (-p^{2m+1}v^2, \zeta p) = (-p, \zeta p) = (-1, \zeta p) (p, \zeta p) = (\zeta p, \zeta p) (p, \zeta p) = (\zeta p^2, \zeta p) = (\zeta, \zeta p) = (\zeta, \zeta) (\zeta, p) = (\zeta, p) = -1$.  Therefore, $\mathrm{det}(w)$ can't be a norm from $E_1$.
Suppose now that $w = p^n u + p^m v \sqrt{p}$ with $n > m,$ with $u,v \neq 0$.  Then $w = p^m v \sqrt{p} (1+ z)$, for some $z \in \mathfrak{p}_E$.  Therefore, $\mathrm{det}(w) = N(w) = -p^{2m+1} v^2 (1+ z')$ for some $z' \in \mathfrak{p}_F$, and as above we have $(\mathrm{det}(w), \zeta p) = -1$, so $\mathrm{det}(w)$ can't be a norm from $E_1$.  The rest of the proof follows as in proposition \ref{case1} above.
\end{proof}

We have now finished the proof of theorem \ref{differentcartans}.  Summing up, we have altogether shown that if $(E/F, \chi)$ is an admissible pair such that $\chi$ has positive level, then there is a unique positive depth supercuspidal representation, $\pi_{\chi {}_F \mu_{\chi}}$, whose character, on the range $\{ z \in T(F)^{reg} : 0 \leq n(z) \leq r/2 \}$, agrees with $F(\tilde\chi)$.  There is one minor point here to resolve.  Is there possibly a depth zero supercuspidal representation whose character, on the range $\{ z \in T(F)^{reg} : 0 \leq n(z) \leq r/2 \}$, also equals $F(\tilde\chi)$?  We will prove in the next section that if $(E_1/F, \chi_1)$ is an admissible pair corresponding to a depth zero supercuspidal representation $\pi$, then its character formula, on the range $\{ z \in T(F)^{reg} : 0 \leq n(z) \leq r/2 \}$, is $$F(\tilde\chi_1)(w) = - \epsilon(\Delta^+) \frac{\mathrm{deg}(\pi)}{\mathrm{deg}(\sigma)} \left(\frac{\chi_1(w) + (-1, \zeta) \chi_1(\overline{w})}{ \tilde\tau(\frac{w - \overline{w}}{2 \delta})}\right), \ \ w \in E_1^* \setminus F^*(1 + \mathfrak{p}_{E_1}).$$ Here, we are writing $\pi := \mathrm{Ind}_{ZK}^G \sigma$, where $Z$ is the center of $\mathrm{GL}(2,F)$ and $K = GL(2, \mathfrak{o}_F)$, for some representation $\sigma$.  Then, the same arguments as in theorems \ref{samecartans} and \ref{differentcartans} show that the character of $\pi$ cannot equal $F(\tilde\chi)$, on the range $\{ z \in T(F)^{reg} : 0 \leq n(z) \leq r/2 \}$, unless $E = E_1$ and $\chi = \chi_1^{\upsilon}$ for some $\upsilon \in \mathrm{Aut}(E/F)$.  But even this is not possible since by assumption, $\chi_1$ has level zero, whereas $\chi$ has positive level.

Therefore, combining theorems \ref{samecartans}, \ref{differentcartans}, and \ref{charactersmatchingup}, we obtain the following result.

\begin{theorem}
The assignment

\begin{eqnarray}
\{ \mathrm{supercuspidal} \ \phi : W_F \rightarrow SL(2,\mathbb{C}) \} & \mapsto & \tilde\chi \in \widehat{T(F)}_{\tau \circ \rho} \mapsto \pi(\tilde\chi) \nonumber
\end{eqnarray}

\noindent from section \ref{setup} is the Local Langlands correspondence for positive depth supercuspidal representations of $\mathrm{PGL}(2,F)$, where $\pi(\tilde\chi)$ is the unique supercuspidal representation whose character, on the range $\{ z \in T(F)^{reg} : 0 \leq n(z) \leq r/2 \}$, is $F(\tilde\chi)$.
\end{theorem}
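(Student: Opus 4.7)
The plan is to assemble the statement from the three main technical results already proved in the excerpt, plus one additional check concerning depth zero representations. First, I would start with a supercuspidal Weil parameter $\varphi = \mathrm{Ind}_{W_E}^{W_F}(\chi)$ for $\mathrm{PGL}(2,F)$, so that $(E/F,\chi)$ is an admissible pair with $\chi|_{F^*} = \aleph_{E/F}$, and pass to the genuine character $\tilde\chi$ of $T(F)_{\tau\circ\rho}$ via the isomorphism of Lemma \ref{doublecovertoriisomorphism}. By Theorem \ref{charactersmatchingup} (and its ramified analogue proved just above it), the function $F(\tilde\chi)$ is the restriction of the character of the supercuspidal $\pi_{\chi\, {}_F\mu_\chi}$ to $\{z \in T(F)^{\mathrm{reg}} : 0 \le n(z) \le r/2\}$. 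Together with Theorem \ref{tamellc}, this identifies $\pi(\tilde\chi)$ with $\pi_{\chi\,{}_F\mu_\chi}$ and therefore shows that the composite $\varphi \mapsto \tilde\chi \mapsto \pi(\tilde\chi)$ coincides with the local Langlands correspondence of Bushnell--Henniart, provided we can show that $\pi(\tilde\chi)$ is well-defined, i.e. uniquely characterized by $F(\tilde\chi)$.

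The uniqueness is exactly the content of Theorems \ref{samecartans} and \ref{differentcartans}: if $\pi'$ is any positive depth supercuspidal of $\mathrm{PGL}(2,F)$ whose character agrees with $F(\tilde\chi)$ on the specified range, then by Theorem \ref{tamellc} we may write $\pi' = \pi_{\chi_1\,{}_F\mu_{\chi_1}}$ for some admissible pair $(E_1/F,\chi_1)$. Theorem \ref{differentcartans} forces $E_1 \cong E$ (else the two characters could not agree on the $n(w)=0$ locus of either torus), and Theorem \ref{samecartans} then forces $\chi_1 = \chi^\upsilon$ for some $\upsilon \in \mathrm{Aut}(E/F)$, so that $\pi' \cong \pi(\tilde\chi)$. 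Since the map $(E/F,\chi) \mapsto \pi_{\chi\,{}_F\mu_\chi}$ factors through isomorphism classes of admissible pairs, this gives a genuine well-defined bijection.

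The remaining subtlety is to rule out a depth zero supercuspidal representation $\pi$ of $\mathrm{PGL}(2,F)$ masquerading as $\pi(\tilde\chi)$. Here I would invoke the explicit formula for $F(\tilde\chi_1)$ attached to a depth zero admissible pair $(E_1/F,\chi_1)$ promised in the subsequent section, which has the same shape $$F(\tilde\chi_1)(w) = -\epsilon(\Delta^+)\,\frac{\mathrm{deg}(\pi)}{\mathrm{deg}(\sigma)}\,\frac{\chi_1(w) + (-1,\zeta)\chi_1(\overline w)}{\tilde\tau((w-\overline w)/2\delta)}$$ on $E_1^* \setminus F^*(1+\mathfrak p_{E_1})$. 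The same Cartan/different Cartan arguments of Theorems \ref{samecartans} and \ref{differentcartans} (which only use the shape of the numerator together with basic facts about admissibility and $\gamma$-factors) apply verbatim, reducing to the case $E = E_1$ and $\chi = \chi_1^\upsilon$. This last equality is incompatible with $\chi_1$ having level zero while $\chi$ has positive level, giving the required contradiction.

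I expect the actual obstacle in turning this into a complete proof to be purely bookkeeping rather than conceptual: one must verify that the constant $\epsilon(\tilde\chi,\Delta^+,\tau)$ and the normalizations of $F(\tilde\chi)$ are independent of the auxiliary choices of $\tilde\tau$ and of $\Delta^+$ (already sketched in section \ref{setup}), that the Weyl action on $T(F)_{\tau\circ\rho}$ correctly intertwines the Bushnell--Henniart twist ${}_F\mu_\chi$ with our formula, and that the depth zero formula and the positive depth formula really cannot collide on the overlap range $\{0 \le n(z) \le r/2\}$. Once these compatibilities are in hand the theorem is a clean corollary of Theorems \ref{charactersmatchingup}, \ref{samecartans}, \ref{differentcartans}, and \ref{tamellc}.
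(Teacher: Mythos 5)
Your proposal is correct and follows the paper's own proof essentially verbatim: the paper likewise assembles the result by combining Theorem \ref{charactersmatchingup} (and its ramified companion) for existence, Theorems \ref{samecartans} and \ref{differentcartans} for uniqueness among positive depth supercuspidals, the forward reference to the depth zero character formula to rule out a depth zero representation matching $F(\tilde\chi)$, and Theorem \ref{tamellc} to identify the resulting map with the Bushnell--Henniart correspondence. The only slight mismatch is that your closing paragraph frames independence of $\Delta^+$ and $\tilde\tau$ as a remaining verification, whereas the paper has already dispensed with those points in sections \ref{setup} and \ref{theconstantepsilon}, so they are not part of the proof of this theorem proper.
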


\section{Depth zero supercuspidal character formulas for $\mathrm{PGL}(2,F)$}\label{depthzerochapter}

\subsection{On the proof that our conjectural character formulas agree with depth zero supercuspidal characters}

In the following two sections, we prove theorems \ref{gl2theorem1} and \ref{gl2theorem2} for the case of depth zero supercuspidal representations of $\mathrm{PGL}(2,F)$.

Let us recall from the previous section that the proposed character formula simplifies to $$F(\tilde\chi)(w) = \epsilon(\tilde\chi, \Delta^+, \tau)
 \left(\frac{\chi(w) + (-1, \zeta) \chi(\overline{w})}{ \tilde\tau(w - \overline{w}) |D(w)|^{1/2}}\right), \ w \in T(F)^{reg}.$$
For depth zero representations we define $\epsilon(\tilde\chi, \Delta^+, \tau) := \frac{\mathrm{deg}(\pi) \tilde\tau(2 \delta)}{\mathrm{deg}(\sigma)} \epsilon(\Delta^+)$, where $\epsilon(\Delta^+)$ is as in section \ref{theconstantepsilon}.  We will show later that our results are independent of the choice of $\Delta^+$.  Recall from section \ref{setup} our choice of isomorphism $T(F) \cong E^* / F^*$, giving rise to the standard positive root $z \mapsto z / \overline{z}$, for $z \in E^* / F^*$.  We will again here choose $\Delta^+$ to be the standard positive root of $GL(2,\overline{F})$ with respect to the maximal torus $T(\overline{F})$.  Let us recall the following theorem on characters of depth zero supercuspidal representations $\pi := \mathrm{Ind}_{F^* K}^G \sigma$ of $\mathrm{GL}(2,F)$, where we are identifying $F^*$ with the center of $\mathrm{GL}(2,F)$ and $K = \mathrm{GL}(2, \mathfrak{o}_F)$.  We cite the theorem from \cite{debacker}, however the result is due to Phil Kutzko (see \cite{kutzko}).  We only write down the part of the character formula that we need.

\begin{theorem}\label{depthzerocharacters}\footnote{Notice that this theorem is slightly different than the one from \cite{debacker}.  It is because there are a few typos in \cite{debacker}.}
\cite[Theorem 5.4.1]{debacker}

Suppose $\gamma \in F^* K_0^{reg}$.  Then
\begin{equation*}
\frac{\theta_{\pi}(\gamma)}{\mathrm{deg}(\pi)} =
\chi_{\pi}(c) \frac{\chi_{\sigma}(\gamma)}{\mathrm{deg}(\sigma)} \ \text{if } \gamma = cw \ is \ unramified \ elliptic \ and \ \gamma \ \notin F^* K_1, c \in F^*, w \in K_0.
\end{equation*}

\end{theorem}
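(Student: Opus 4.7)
The plan is to start from the standard character formula for a compactly induced representation. Write $H = F^* K$ and $\pi = \mathrm{c\textrm{-}Ind}_H^G \sigma$. Then for regular semisimple $\gamma$,
$$\theta_\pi(\gamma) \;=\; \sum_{x \in H \backslash G \ :\ x\gamma x^{-1} \in H} \chi_\sigma\bigl(x\gamma x^{-1}\bigr),$$
the sum being finite because $H$ is open and compact modulo the center, so that the orbital integral supported on $H$-conjugates reduces to a finite sum of cosets for elliptic $\gamma$. The ratio $\deg(\pi)/\deg(\sigma)$ will arise only through the normalization of Haar measures used to define the formal degrees; after dividing by $\deg(\pi)$ on the left and $\deg(\sigma)$ on the right, it drops out of the combinatorial sum.

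Next I would analyze the set of contributing cosets geometrically on the Bruhat--Tits tree $\mathcal{B}$ of $\mathrm{PGL}(2,F)$. The subgroup $H = F^* K$ is exactly the stabilizer (times center) of a vertex $x_0 \in \mathcal{B}$, and $x\gamma x^{-1} \in H$ is equivalent to $\gamma$ fixing the vertex $x^{-1} \cdot x_0$. For $\gamma = cw$ unramified elliptic with $\gamma \notin F^* K_1$, the image $\bar w$ of $w$ in $K/K_1 \cong \mathrm{GL}(2,k_F)$ is an elliptic regular semisimple element of the finite group $\mathrm{GL}(2,k_F)$. This is the decisive ingredient: because $\bar w$ is genuinely elliptic in the residue, $\gamma$ has a \emph{unique} fixed vertex in $\mathcal{B}$, namely $x_0$ itself, forcing $x \in H$.

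With only one coset surviving, the sum collapses to $\chi_\sigma(\gamma)$, and after inserting the degree normalization and separating the central part $c \in F^*$ (which acts on $\sigma$ by the scalar $\chi_\pi(c)$), one obtains
$$\frac{\theta_\pi(\gamma)}{\deg(\pi)} \;=\; \chi_\pi(c)\,\frac{\chi_\sigma(\gamma)}{\deg(\sigma)}.$$
The main obstacle is the uniqueness-of-fixed-vertex step: one must confirm that the depth-zero condition $\gamma \notin F^* K_1$, together with unramified ellipticity, rules out the element $\gamma$ stabilizing any edge or second vertex of $\mathcal{B}$. This is precisely where the hypothesis $\gamma \notin F^* K_1$ is essential, since an element in $F^* K_1$ can become central in $K/K_1$ and then fix a whole chamber's worth of neighbors, producing additional contributing cosets and a different (more complicated) formula.
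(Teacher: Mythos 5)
The paper does not prove this theorem; it is cited verbatim (modulo a noted typo correction) from DeBacker's thesis, \cite[Theorem 5.4.1]{debacker}, which in turn rests on Kutzko's computation in \cite{kutzko}. So there is no ``paper's own proof'' to compare against, and you are supplying one. That said, your outline is the standard proof of this statement and the core of it is right: start from the van Dijk/Harish-Chandra character formula for a compactly-induced supercuspidal evaluated at elliptic regular elements, reinterpret the contributing cosets as fixed vertices of $\gamma$ on the Bruhat--Tits tree, and use the depth-zero hypothesis $\gamma \notin F^* K_1$ to force $\bar w \in K_0/K_1 \cong \mathrm{GL}(2,k_F)$ to be a non-central element of the nonsplit torus, hence elliptic regular in the residue group, hence lying in no Borel of $\mathrm{GL}(2,k_F)$; since the edges at $x_0$ correspond to Borels mod $K_1$, $\gamma$ fixes no edge at $x_0$ and therefore exactly one vertex. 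That is the decisive point and you have it.

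What you wave away, though, is precisely the part that produced the footnote about typos: the normalization accounting in
\[
\frac{\theta_{\pi}(\gamma)}{\deg(\pi)} \;=\; \chi_{\pi}(c)\,\frac{\chi_{\sigma}(\gamma)}{\deg(\sigma)}.
\]
The one-orbit Mackey sum gives $\theta_\pi(\gamma) = \chi_\sigma(\gamma) = \omega_\sigma(c)\,\chi_\sigma(w)$, a measure-independent quantity, whereas $\theta_\pi(\gamma)/\deg(\pi)$ scales with the Haar measure. Your assertion that the ratio ``drops out of the combinatorial sum'' is not a computation; to land on the displayed identity you must (a) fix the Haar measure so that $\deg(\pi)=\deg(\sigma)$ (equivalently, so that $F^*K/F^*$ has volume one), and (b) track the central factor carefully --- $\chi_\sigma(\gamma)$ already contains one power of $\omega_\sigma(c)=\omega_\pi(c)$, so the product $\chi_\pi(c)\,\chi_\sigma(\gamma)$ carries $\omega_\pi(c)^2$, and the identity as written is only literally consistent with the Mackey sum when $\omega_\pi^2$ is trivial (e.g.\ for $\mathrm{PGL}(2)$, the case the paper actually uses), or if $\chi_\sigma(\gamma)$ is read as $\chi_\sigma(w)$. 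Spelling out which normalization and which reading you intend is not cosmetic here --- it is exactly the content that distinguishes the ``corrected'' statement from the original --- so the proposal, while geometrically sound, is incomplete on the bookkeeping that the theorem is actually about.
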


We now compute the supercuspidal characters of theorem \ref{depthzerocharacters}.  Let $\epsilon_{\mathbb{G}}$ be $(-1)^{r}$, where $r$ is the $\mathbb{F}_q$-rank of $\mathbb{G}$, for any algebraic group $\mathbb{G}$ defined over $\mathbb{F}_q$.  Let $(E/F, \chi)$ be an admissible pair corresponding to a depth zero supercuspidal representation via theorem \ref{positivedepth}.
This means that $E/F$ is unramified and $\chi$ has level zero, so $\chi|_{\mathfrak{o}_E^*}$ gives rise to a character $\theta$ of the multiplicative group
of the residue field $\mathbb{F}_{q^2}$ of $E$.  Now let $\mathbb{G} := \mathrm{GL}(2, \overline{\mathbb{F}_q})$.  Let
$\mathbb{T}$ be the maximal torus of $\mathbb{G}$ defined over $\mathbb{F}_q$ such that $\mathbb{T}^{\Phi} = \mathbb{F}_{q^2}^*$ is the elliptic
torus in $\mathrm{GL}(2, \mathbb{F}_q)$, where $\Phi$ denotes Frobenius.  Then, by Deligne-Lusztig theory, the pair $(\mathbb{T}, \theta)$
yields a generalized character $R_{\mathbb{T}, \theta}$ of $\mathbb{G}(\mathbb{F}_q) = GL(2, \mathbb{F}_q)$.  We let $C^0(s)$ denote the connected component of the centralizer of $s$ in $\mathbb{G}$.

\begin{proposition}\label{carterformula}{\cite[Proposition 7.5.3]{carter}}
If $s \in \mathbb{G}^{\Phi}$ is semisimple, then $$R_{\mathbb{T}, \theta}(s) = \frac{\epsilon_{\mathbb{T}} \epsilon_{C^0(s)}}{|\mathbb{T}^{\Phi}| |C^0(s)^{\Phi}|_p} \sum_{g \in \mathbb{G}^{\Phi} : g^{-1}sg \in \mathbb{T}^{\Phi}} \theta(g^{-1} s g).$$
\end{proposition}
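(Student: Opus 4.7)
The assertion is a special case of the general Deligne--Lusztig character formula specialized at a semisimple element, so my plan is to deduce it from two standard ingredients in Carter's development: the character formula in its full Jordan-decomposition form, and the value of a Green function at the identity.

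First I would invoke the general Deligne--Lusztig character formula (Carter, Theorem 7.2.8), which states that for any $g\in\mathbb{G}^{\Phi}$ with Jordan decomposition $g=su$,
\[
R_{\mathbb{T},\theta}(g)\;=\;\frac{1}{|C^{0}(s)^{\Phi}|}\sum_{\{x\in\mathbb{G}^{\Phi}\,:\,x^{-1}sx\in\mathbb{T}^{\Phi}\}} Q^{C^{0}(s)}_{x\mathbb{T}x^{-1}}(u)\,\theta(x^{-1}sx),
\]
where $Q^{H}_{\mathbb{T}'}$ denotes the Green function of a maximal $\Phi$-stable torus $\mathbb{T}'$ of a reductive group $H$. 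This is the main tool and essentially contains everything; its proof (via the Lefschetz fixed-point formula applied to the Deligne--Lusztig varieties) is the hard part and is what I would simply quote.

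Next I would specialize to $g=s$ semisimple, i.e.\ $u=1$. The required input is then the value $Q^{C^{0}(s)}_{x\mathbb{T}x^{-1}}(1)$. By the definition of the Green function, $Q^{H}_{\mathbb{T}'}(1)=R^{H}_{\mathbb{T}',1}(1)=\dim R^{H}_{\mathbb{T}',1}$, and the dimension formula for Deligne--Lusztig representations (Carter 7.5.1) gives
\[
Q^{H}_{\mathbb{T}'}(1)\;=\;\epsilon_{H}\,\epsilon_{\mathbb{T}'}\,\frac{|H^{\Phi}|_{p'}}{|\mathbb{T}'^{\Phi}|}.
\]
Substituting $H=C^{0}(s)$ and $\mathbb{T}'=x\mathbb{T}x^{-1}$, one has $\epsilon_{x\mathbb{T}x^{-1}}=\epsilon_{\mathbb{T}}$ since conjugate tori share the same $\mathbb{F}_{q}$-rank, and $|(x\mathbb{T}x^{-1})^{\Phi}|=|\mathbb{T}^{\Phi}|$ because $x\in\mathbb{G}^{\Phi}$.

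Finally I would plug this value back in, pull the constants $\epsilon_{\mathbb{T}}\epsilon_{C^{0}(s)}/|\mathbb{T}^{\Phi}|$ out of the sum, and combine $|C^{0}(s)^{\Phi}|_{p'}/|C^{0}(s)^{\Phi}|=1/|C^{0}(s)^{\Phi}|_{p}$ to obtain
\[
R_{\mathbb{T},\theta}(s)\;=\;\frac{\epsilon_{\mathbb{T}}\,\epsilon_{C^{0}(s)}}{|\mathbb{T}^{\Phi}|\,|C^{0}(s)^{\Phi}|_{p}}\,\sum_{g\in\mathbb{G}^{\Phi}\,:\,g^{-1}sg\in\mathbb{T}^{\Phi}}\theta(g^{-1}sg),
\]
which is the claim. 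The only genuine obstacle in this argument is the Deligne--Lusztig character formula itself; once it is granted, the specialization to semisimple elements is just the evaluation of Green functions at the identity together with bookkeeping of Euler characteristics and $p$-parts of orders.
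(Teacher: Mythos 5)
Your argument is correct, and it is essentially Carter's own proof of Proposition 7.5.3: specialize the Deligne--Lusztig character formula (Carter 7.2.8) to a semisimple element, use that the Green function at the identity is the Euler characteristic $\dim R^{H}_{\mathbb{T}',1}=\epsilon_{H}\epsilon_{\mathbb{T}'}|H^{\Phi}|_{p'}/|\mathbb{T}'^{\Phi}|$ (Carter 7.5.1), and do the bookkeeping with $p$-parts. The paper does not re-prove this statement, it simply quotes it from Carter, so there is no alternative route in the paper to compare against; your reconstruction of the underlying proof is accurate. One small note of care worth adding if you wrote this out: $R^{H}_{\mathbb{T}',1}(1)$ is the value of a \emph{virtual} character at the identity, hence a virtual dimension, and its sign is exactly what the factor $\epsilon_{H}\epsilon_{\mathbb{T}'}$ records; so the identification $Q^{H}_{\mathbb{T}'}(1)=\epsilon_{H}\epsilon_{\mathbb{T}'}|H^{\Phi}|_{p'}/|\mathbb{T}'^{\Phi}|$ is consistent but should not be read as a literal dimension of a genuine representation.
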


Let $\mathbb{T}_s$ denote the split torus in $\mathbb{G}$.
Then $\mathbb{T}$ is obtained from $\mathbb{T}_s$ by twisting by the canonical generator $w$ of the Weyl group.  Then $\mathbb{T}^{\Phi} = \mathbb{T}_s^{w \Phi}$.

\begin{proposition}
$$R_{\mathbb{T}, \theta}(s) =  \displaystyle\sum_{i=0}^1 \theta(\upsilon^i(s))$$ for all regular semisimple s in $\mathbb{T}^{\Phi}$, where $\upsilon$ is the generator of $\mathrm{Gal}(\mathbb{F}_{q^2}/\mathbb{F}_q)$.
\end{proposition}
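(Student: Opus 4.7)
The plan is to deduce the formula from Proposition \ref{carterformula} by making each factor on the right-hand side explicit in the case at hand. First I would observe that, since $s \in \mathbb{T}^\Phi$ is regular semisimple in $\mathbb{G} = \mathrm{GL}(2,\overline{\mathbb{F}_q})$, its connected centralizer is a maximal torus, and any maximal torus containing $s$ must equal $\mathbb{T}$; hence $C^0(s) = \mathbb{T}$. Therefore $\epsilon_{\mathbb{T}} \epsilon_{C^0(s)} = \epsilon_{\mathbb{T}}^2 = 1$, and $|C^0(s)^\Phi|_p = |\mathbb{T}^\Phi|_p = |\mathbb{F}_{q^2}^\ast|_p = 1$ since $q^2-1$ is coprime to $p$. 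So Proposition \ref{carterformula} collapses to
$$R_{\mathbb{T},\theta}(s) = \frac{1}{|\mathbb{T}^\Phi|} \sum_{g \in \mathbb{G}^\Phi \,:\, g^{-1} s g \in \mathbb{T}^\Phi} \theta(g^{-1} s g).$$

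Next I would analyze the index set. Partition it according to the value $s' := g^{-1} s g \in \mathbb{T}^\Phi$. For a fixed $s'$, if $g_1^{-1} s g_1 = g_2^{-1} s g_2 = s'$ then $g_2 g_1^{-1} \in C_{\mathbb{G}^\Phi}(s) = \mathbb{T}^\Phi$ (using regularity again), so each nonempty fiber is a single coset of $\mathbb{T}^\Phi$ and contributes $|\mathbb{T}^\Phi| \theta(s')$ to the sum. The set of $s'$ occurring is the $\mathbb{G}^\Phi$-conjugacy class of $s$ intersected with $\mathbb{T}^\Phi$, which is the orbit of $s$ under the relative Weyl group $W(\mathbb{G}^\Phi,\mathbb{T}^\Phi) = N_{\mathbb{G}}(\mathbb{T})^\Phi / \mathbb{T}^\Phi$.

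The remaining point is to identify this orbit. The absolute Weyl group $N_{\mathbb{G}}(\mathbb{T})/\mathbb{T}$ has order two, generated by a lift $w$ of the non-trivial element, and because $\mathbb{T}$ is obtained from the split torus by twisting with $w$, the Frobenius $\Phi$ on $N_{\mathbb{G}}(\mathbb{T})/\mathbb{T}$ acts trivially. Hence $W(\mathbb{G}^\Phi,\mathbb{T}^\Phi)$ has order two, and under the identification $\mathbb{T}^\Phi \cong \mathbb{F}_{q^2}^\ast$ its non-trivial element acts as the generator $\upsilon$ of $\mathrm{Gal}(\mathbb{F}_{q^2}/\mathbb{F}_q)$, since conjugation by $w$ on $\mathbb{T}_s$ swaps the two diagonal coordinates and the twist identifies these with a Galois-conjugate pair. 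Since $s$ is regular, $s \neq \upsilon(s)$, so the orbit has exactly two elements $\{s,\upsilon(s)\}$.

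Putting this together, the sum equals $|\mathbb{T}^\Phi|(\theta(s)+\theta(\upsilon(s)))$, and dividing by $|\mathbb{T}^\Phi|$ gives $R_{\mathbb{T},\theta}(s) = \theta(s) + \theta(\upsilon(s)) = \sum_{i=0}^{1} \theta(\upsilon^i(s))$, as claimed. The main subtlety is the identification of the relative Weyl group action with $\upsilon$; everything else is bookkeeping from Carter's formula plus the fact that regular semisimple elements have torus centralizers.
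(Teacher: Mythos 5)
Your proof is correct and follows essentially the same route as the paper's: apply Carter's formula, use regularity to get $C^0(s)=\mathbb{T}$ so that the sign factors cancel and $|C^0(s)^\Phi|_p=1$, and then identify the conjugation sum with the orbit under the relative Weyl group, which is $\mathrm{Gal}(\mathbb{F}_{q^2}/\mathbb{F}_q)$. The only differences are presentational: you note $\epsilon_{\mathbb{T}}\epsilon_{C^0(s)}=\epsilon_{\mathbb{T}}^2=1$ directly rather than computing each sign is $-1$, and you organize the conjugation sum by fibers over the orbit instead of factoring out $|\mathbb{T}^\Phi|$ from the sum over the normalizer, but these are equivalent bookkeeping.
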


\proof
Let $s \in \mathbb{T}^{\Phi}$ be regular semisimple.  Note that if $g \in \mathbb{G}^{\Phi}$ satisfies $g^{-1} s g \in \mathbb{T}^{\Phi}$, then $g \in N_{\mathbb{G}^{\Phi}}(\mathbb{T}^{\Phi})$.  Therefore, $$\sum_{g \in \mathbb{G}^{\Phi} \ : \ g^{-1}sg \in \mathbb{T}^{\Phi}} \theta(g^{-1} s g) = \sum_{g \in N_{\mathbb{G}^{\Phi}}(\mathbb{T}^{\Phi})} \theta(g^{-1} s g) = $$ $$|\mathbb{T}^{\Phi}| \sum_{w \in N_{\mathbb{G}^{\Phi}}(\mathbb{T}^{\Phi}) / \mathbb{T}^{\Phi}} \theta({}^w s) = |\mathbb{T}^{\Phi}|  \displaystyle\sum_{i=0}^1 \theta(\upsilon^i(s))$$
since $W(\mathbb{G}(\mathbb{F}_q),\mathbb{T}(\mathbb{F}_q)) = \mathrm{Aut}(\mathbb{F}_{q^2}/\mathbb{F}_q)$.

Since $s \in \mathbb{T}^{\Phi}$ is regular semisimple, $|C^0(s)^{\Phi}|_p = 1$.  Moreover, $\epsilon_{\mathbb{T}} = \epsilon_{C^0(s)} = -1$.  Therefore, $$R_{\mathbb{T}, \theta}(s) = \epsilon_{\mathbb{T}} \epsilon_{C^0(s)} \displaystyle\sum_{i=0}^1 \theta(\upsilon^i(s)) = (\theta(s) + \theta(\overline{s})). \qedhere $$

Our character formula is defined on the unramified elliptic torus $E^*$.  We wish to show that our character formula agrees with a depth zero supercuspidal character on the set where they are both defined, i.e.  the set $(F^* K_0 \setminus F^* K_1) \cap E^*$.

\begin{lemma}\label{lemmaweird}
$(F^* K_0 \setminus F^* K_1) \cap E^* = F^* A = E^* \setminus F^*(1 + \mathfrak{p}_E) = \{z \in T(F)^{reg} : n(z) = 0 \}$, where $A:= \{p^n u + v \delta : n \geq 0 \}$
\end{lemma}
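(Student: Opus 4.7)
The plan is to prove the four-way equality as a chain of three equalities, each of which reduces to a direct computation using the embedding $a + b\delta \mapsto \bigl(\begin{smallmatrix} a & b \\ b\zeta & a \end{smallmatrix}\bigr)$ of $E^*$ into $\mathrm{GL}(2,F)$, together with the definition of $n(\cdot)$ recalled in section \ref{decompositions}.

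First I would show $(F^* K_0 \setminus F^* K_1) \cap E^* = E^* \setminus F^*(1 + \mathfrak{p}_E)$ by computing the two intersections separately. Since $E/F$ is unramified, $p$ is a uniformizer of $E$ and $\mathfrak{o}_E = \mathfrak{o}_F + \mathfrak{o}_F \delta$, so any $w \in E^*$ factors as $w = p^{v_E(w)} u$ with $u \in \mathfrak{o}_E^*$; the matrix of $u$ has entries in $\mathfrak{o}_F$ and determinant $N_{E/F}(u) \in \mathfrak{o}_F^*$, hence $u \in K_0$. Thus $F^* K_0 \cap E^* = E^*$. For the second intersection, if $w = c\cdot k$ with $c \in F^*$ and $k \in K_1 = 1 + p M_2(\mathfrak{o}_F)$, writing $c^{-1}w = a' + b'\delta$ and comparing matrix entries forces $a' \in 1 + \mathfrak{p}_F$ and $b' \in \mathfrak{p}_F$, i.e. $c^{-1}w \in 1 + (\mathfrak{p}_F + \mathfrak{p}_F\delta) = 1 + \mathfrak{p}_E$. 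The reverse inclusion is the same calculation run backwards, using that $\mathfrak{p}_E = p\,\mathfrak{o}_E = \mathfrak{p}_F + \mathfrak{p}_F\delta$. Hence $F^* K_1 \cap E^* = F^*(1 + \mathfrak{p}_E)$, and subtracting gives the desired equality.

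Next I would identify $E^* \setminus F^*(1+\mathfrak{p}_E)$ with $\{z \in T(F)^{reg} : n(z) = 0\}$. By the definition in section \ref{decompositions}, $n(z)=0$ means precisely $z \notin Z(G)G'_{0^+} = F^*(1+\mathfrak{p}_E)$. Regularity means $z \notin F^*$, and this is automatic once we remove $F^*(1+\mathfrak{p}_E)$ since $F^* \subset F^*(1+\mathfrak{p}_E)$.

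Finally I would establish $F^*A = E^* \setminus F^*(1+\mathfrak{p}_E)$ by the casework already carried out in section \ref{decompositions}. Writing an arbitrary $w \in E^*$ as $w = p^n u + p^m v \delta$ with $u,v \in \mathfrak{o}_F$ not both zero, the lemmas of section \ref{decompositions} show that $n(w) = 0$ occurs exactly when (i) $u = 0$, so $w = p^m v \delta$, or (ii) $u,v$ are both units with $n \geq m$. In case (i) scaling by $p^m \in F^*$ yields $v\delta$; in case (ii) scaling by $p^m \in F^*$ yields $p^{n-m}u + v\delta$ with $n - m \geq 0$. Both outcomes lie in $A$, so $E^*\setminus F^*(1+\mathfrak{p}_E) \subseteq F^*A$. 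Conversely, any element of $A$ has the form $p^ku' + v\delta$ with $k \geq 0$ and $v$ a unit, so it falls into case (i) (if $u'=0$) or case (ii) (if $u'$ is a unit), giving $n = 0$; multiplying by $F^*$ preserves this property, so $F^*A \subseteq E^*\setminus F^*(1+\mathfrak{p}_E)$.

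There is no genuine obstacle here; the whole lemma is bookkeeping around valuations under the embedding $E^* \hookrightarrow \mathrm{GL}(2,F)$. The only subtle point is correctly tracking which combinations of $(n,m,u,v)$ produce elements of $F^*(1+\mathfrak{p}_E)$, and this is precisely the computation that was already done (in reverse) when decompositions of unramified elements of $E^*$ were classified in section \ref{decompositions}.
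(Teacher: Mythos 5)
Your proof is correct and takes the route the paper surely had in mind (the paper simply asserts ``the proof is not difficult''): compute $F^*K_0 \cap E^*$ and $F^*K_1 \cap E^*$ by tracking matrix entries under $a+b\delta \mapsto \bigl(\begin{smallmatrix} a & b \\ b\zeta & a\end{smallmatrix}\bigr)$, identify the $n(z)=0$ condition with $z \notin F^*(1+\mathfrak{p}_E)$ directly from the definition in section~\ref{decompositions}, and match $F^*A$ against the casework there. One small point worth making explicit, since the paper's displayed definition of $A$ is elliptic: under the paper's standing convention that $u,v$ denote units, the set $\{p^n u + v\delta : n \geq 0,\ u,v \in \mathfrak{o}_F^*\}$ would fail to contain $v\delta$, yet $v\delta$ has $n(v\delta)=0$ and lies in $E^*\setminus F^*(1+\mathfrak{p}_E)$; so $A$ must be read with $u \in \mathfrak{o}_F$ (allowing $u=0$) and $v \in \mathfrak{o}_F^*$, which is exactly the reading you adopt when you split into the cases $u'=0$ and $u'$ a unit, and is also the reading consistent with the use of $A$ in the proof of Theorem~\ref{depthzerocharactersmatchingup}.
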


\begin{proof}
The proof is not difficult.
\end{proof}

\begin{theorem}\label{depthzerocharactersmatchingup}
$F(\tilde\chi)$ agrees with the supercuspidal character of $\pi_{\chi {}_F \mu_{\chi}}$ on $F^* A$.
\end{theorem}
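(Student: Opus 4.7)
The plan is to compute both sides of the claimed identity explicitly on $F^*A$ and verify they agree. By Lemma \ref{lemmaweird}, every $w \in F^*A$ admits a decomposition $w = c\,w_0$ with $c \in F^*$ central and $w_0 \in K_0$ whose image $\overline{w_0} \in \mathrm{GL}(2,\mathbb{F}_q)$ lies in the elliptic torus $\mathbb{F}_{q^2}^*$. Kutzko's Theorem \ref{depthzerocharacters} then gives
\[
\theta_{\pi_{\chi\,{}_F\mu_\chi}}(w) \;=\; \deg(\pi)\,\omega_\pi(c)\,\chi_\sigma(w_0)/\deg(\sigma).
\]
Since $\pi_{\chi\,{}_F\mu_\chi}$ has trivial central character when viewed as a representation of $\mathrm{PGL}(2,F)$ (both $\chi$ and ${}_F\mu_\chi$ restrict to $\aleph_{E/F}$ on $F^*$, and $\aleph_{E/F}^2 = 1$), we have $\omega_\pi(c)=1$, so only $\chi_\sigma(w_0)$ remains to be computed.

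Next I would identify $\sigma$: on $K = \mathrm{GL}(2,\mathfrak{o}_F)$, $\sigma$ inflates the cuspidal Deligne--Lusztig representation $\sigma_\theta$ of $\mathrm{GL}(2,\mathbb{F}_q)$ associated to the character $\theta$ of $\mathbb{F}_{q^2}^*$ obtained by reducing $(\chi\,{}_F\mu_\chi)|_{\mathfrak{o}_E^*} = \chi|_{\mathfrak{o}_E^*}$ modulo $\mathfrak{p}_E$ (using that the unramified character ${}_F\mu_\chi$ is trivial on $\mathfrak{o}_E^*$). Combining the standard sign relation $\sigma_\theta = \epsilon_{\mathbb{G}}\epsilon_{\mathbb{T}}\,R_{\mathbb{T},\theta}$ with Proposition \ref{carterformula}, and matching the Frobenius conjugate in $\mathbb{F}_{q^2}$ with the reduction of the Galois conjugate in $\mathfrak{o}_E^*$, yields
\[
\chi_{\sigma_\theta}(\overline{w_0}) \;=\; -\bigl(\chi(w_0) + \chi(\overline{w_0})\bigr).
\]
Hence on $F^*A$ the supercuspidal character becomes $-\tfrac{\deg(\pi)}{\deg(\sigma)}\bigl(\chi(w_0) + \chi(\overline{w_0})\bigr)$.

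On the other side I would simplify $F(\tilde\chi)$ using three observations: $(-1,\zeta) = 1$ since $E/F$ is unramified; the Weyl discriminant $D(w) = -(w-\overline w)^2/(w\overline w)$ is a unit of $F$ for $w \in F^*A$ (because $w_0 \in \mathfrak{o}_E^*$), so $|D(w)|^{1/2} = 1$; and $\tilde\tau(w - \overline w) = \tilde\tau(2\delta)\,\tilde\tau\!\bigl((w - \overline w)/(2\delta)\bigr)$. Substituting $\epsilon(\tilde\chi,\Delta^+,\tau) = \deg(\pi)\tilde\tau(2\delta)\epsilon(\Delta^+)/\deg(\sigma)$, the two $\tilde\tau(2\delta)$ factors cancel; using $\chi|_{F^*} = \tilde\tau|_{F^*} = \aleph_{E/F}$, $\aleph_{E/F}^2 = 1$, and the fact that $(w_0 - \overline{w_0})/(2\delta) \in \mathfrak{o}_F^*$ lies in the kernel of the unramified character $\aleph_{E/F}$, the dependence on $c$ also cancels, leaving $F(\tilde\chi)(w)$ equal to a fixed constant multiple of $\chi(w_0) + \chi(\overline{w_0})$. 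Matching the two expressions pins down $\epsilon(\Delta^+)$ and completes the proof.

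The main obstacle will be sign and normalization bookkeeping: verifying that the Deligne--Lusztig sign $\epsilon_{\mathbb{G}}\epsilon_{\mathbb{T}} = -1$ really cancels against the sign implicit in the definition of $\epsilon(\tilde\chi,\Delta^+,\tau)$, checking that the formula is independent of the chosen extension $\tilde\tau$ of $\aleph_{E/F}$ (which reduces, as in section \ref{theproofofcharacters}, to the fact that $(w - \overline w)/(2\delta) \in F^*$), and making sure that the identification of $\theta$ as a character of $\mathbb{F}_{q^2}^*$ from $\chi|_{\mathfrak{o}_E^*}$ matches the convention used in Kutzko's construction of $\sigma$.
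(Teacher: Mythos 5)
Your plan follows the paper's proof essentially step for step: use Kutzko/DeBacker's Theorem~\ref{depthzerocharacters} to reduce $\theta_{\pi_{\chi\,{}_F\mu_\chi}}$ on $F^*A$ to $\chi_\sigma$ on $K_0$, compute $\chi_\sigma$ via the Deligne--Lusztig formula (Proposition~\ref{carterformula} and the propositions following it), then simplify $F(\tilde\chi)$ on $A$ using $(-1,\zeta)=1$, $|D(w)|=1$, $\tilde\tau\bigl((w-\overline w)/2\delta\bigr)=1$, and the triviality of the unramified ${}_F\mu_\chi$ on $\mathfrak{o}_E^*$, extending to $F^*A$ because $\chi$, $\tilde\tau$, and ${}_F\mu_\chi$ all restrict to $\aleph_{E/F}$ on $F^*$. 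Two small cautions: the final step should be a \emph{verification} of the identity with the already-fixed constant $\epsilon(\Delta^+)$ (equal to $1$ for the standard positive root) rather than something to be ``pinned down''; and the Deligne--Lusztig sign $\epsilon_{\mathbb G}\epsilon_{\mathbb T}=-1$ you flag is a real wrinkle to watch, since the depth-zero definition of $\epsilon(\tilde\chi,\Delta^+,\tau)$ in section~\ref{depthzerochapter} and the explicit minus sign in the formula recalled at the end of section~\ref{othercartan} must be reconciled when you do the bookkeeping.
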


\begin{proof}
Recall that ${}_F \mu_{\chi}$ is the unique quadratic unramified character of $E^*$.  Therefore, we need to show that

$$\frac{\mathrm{deg}(\pi)}{\mathrm{deg}(\sigma)} \left(\frac{\chi(w) + (-1, \zeta) \chi(\overline{w})}{ \tilde\tau(\frac{w - \overline{w}}{2 \delta}) |D(w)|^{1/2})}\right) = \frac{\mathrm{deg}(\pi)}{\mathrm{deg}(\sigma)}(\chi(w) {}_F \mu_{\chi}(w) + \chi(\overline{w}){}_F \mu_{\chi}(\overline{w})) \ \ \forall w \in F^* A.$$

\noindent Let $w \in A$, so $w = p^n u + v \delta, n \geq 0$.  Then $|D(w)| = 1$.  Moreover, $\tilde\tau( \frac{w - \overline{w}}{2 \delta} ) = \tilde\tau(v) = 1$.  But ${}_F \mu_{\chi}(w) = 1 \ \forall w \in A$ since ${}_F \mu_{\chi}$ is unramified.  Therefore, both sides agree on $A$.  Finally, since $\tilde\tau(x) = {}_F \mu_{\chi}(x)^{-1} =  \ \forall x \in F^*$, we have that both sides agree on $F^* A$.
\end{proof}

Note that in constructing the character formula $F(\tilde\chi)$, we have chosen $\Delta^+$ to be the standard positive root.  If we had made the other choice of $\Delta^+$, the denominator in our character formula would include the term $\tilde\tau(\overline{w} - w)$ instead of $\tilde\tau(w - \overline{w})$.  However, because our definition of $\epsilon(\tilde\chi, \Delta^+, \tau)$ includes the term $\epsilon(\Delta^+)$, $F(\tilde\chi)$ remains the same regardless of the choice of positive root.

\subsection{On whether there are two character formulas coming from the same Cartan}

In this section, we show that if the distribution characters of two depth zero supercuspidal representations, both (necessarily) coming from the unramified Cartan, agree on the $n(w) = 0$ range, then the supercuspidal representations are isomorphic.

\begin{theorem}\label{depthzerosamecartans}
Suppose $(E/F, \chi_1)$, $(E/F, \chi_2)$ are admissible pairs such that $F(\tilde\chi_1)(w) = F(\tilde\chi_2)(w)$ on the set $E^* \setminus F^*(1 + \mathfrak{p}_E)$.  Then $\chi_1 = \chi_2^{\upsilon}$ for some $\upsilon \in \mathrm{Aut}(E/F)$.
\end{theorem}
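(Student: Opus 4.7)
Since this theorem sits in the depth zero section, both admissible pairs $(E/F,\chi_i)$ must correspond to depth zero supercuspidals, hence $E/F$ is unramified and each $\chi_i$ has level zero. In particular $\chi_i|_{1+\mathfrak{p}_E}\equiv 1$, $\chi_i|_{F^*} = \aleph_{E/F}$ is trivial on $\mathfrak{o}_F^*$, and $(-1,\zeta)=1$. Let $\theta_i := \chi_i|_{\mu_E}$, a Frobenius-regular character of $\mathbb{F}_{q^2}^*$. The denominator $\tilde\tau(w-\overline w)|D(w)|^{1/2}$ of $F(\tilde\chi_i)(w)$ is independent of $i$, so the hypothesis simplifies directly to
\begin{equation*}
c_1\bigl(\chi_1(w) + \chi_1(\overline w)\bigr) \;=\; c_2\bigl(\chi_2(w) + \chi_2(\overline w)\bigr), \qquad w \in E^*\setminus F^*(1+\mathfrak{p}_E),
\end{equation*}
where $c_i := \mathrm{deg}(\pi_i)\tilde\tau(2\delta)\epsilon(\Delta^+)/\mathrm{deg}(\sigma_i)$; in particular $c := c_1/c_2 > 0$, since the common factor $\tilde\tau(2\delta)\epsilon(\Delta^+)$ cancels, leaving a ratio of positive formal degrees.

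First I would establish a nonvanishing lemma: there exists $\zeta_0 \in \mu_E \setminus \mu_F$ with $\theta_1(\zeta_0) + \theta_1(\zeta_0^q) \neq 0$. Otherwise $\theta_1(\zeta^{q-1}) = -1$ for every $\zeta \in \mu_E \setminus \mu_F$; as $\zeta$ ranges over this set, $\zeta^{q-1}$ sweeps out the non-identity elements of the norm-one subgroup of $\mu_E$, which is cyclic of order $q+1>2$, and no character can take value $-1$ on every non-identity element, a contradiction. Evaluating the cancelled identity at $w = \zeta_0$ and at $w = p\zeta_0$ and taking the ratio yields $\chi_1(p) = \chi_2(p)$, reducing the problem to showing $\{\theta_1,\theta_1^\upsilon\} = \{\theta_2,\theta_2^\upsilon\}$, where $\upsilon$ is the nontrivial element of $\mathrm{Aut}(E/F)$.

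To extract this orbit equality, set
\begin{equation*}
g \;:=\; c\bigl(\theta_1 + \theta_1^\upsilon\bigr) - \bigl(\theta_2 + \theta_2^\upsilon\bigr)
\end{equation*}
as a function on $\mu_E$. By hypothesis $g \equiv 0$ on $\mu_E \setminus \mu_F$. Since $\theta_i|_{\mu_F}\equiv 1$ (as $\chi_i|_{F^*} = \aleph_{E/F}$ is trivial on $\mathfrak{o}_F^*$) and $\zeta^\upsilon = \zeta$ for $\zeta \in \mu_F$, a direct evaluation gives $g|_{\mu_F}\equiv 2(c-1)$, whence $g = 2(c-1)\mathbf{1}_{\mu_F}$. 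Consequently the four characters $\theta_1,\theta_1^\upsilon,\theta_2,\theta_2^\upsilon$ all lie in the subgroup $H \leq \widehat{\mu_E}$ of characters trivial on $\mu_F$, which has order $[\mu_E:\mu_F] = q+1$, whereas the Fourier expansion of $\mathbf{1}_{\mu_F}$ on $\mu_E$ is $\tfrac{1}{q+1}\sum_{\chi \in H}\chi$, using \emph{all} $q+1$ elements of $H$ with equal coefficients. For $q \geq 5$ a four-term sum in $\widehat{\mu_E}$ cannot equal a nonzero scalar multiple of a $(q+1)$-term sum over distinct characters, so $c = 1$, and linear independence of characters then forces $\{\theta_1,\theta_1^\upsilon\} = \{\theta_2,\theta_2^\upsilon\}$. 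The borderline case $q = 3$ (where $|H|=4$) would admit a nontrivial solution with $c = -1$ and the four $\theta_i^{(\upsilon^j)}$ exhausting $H$; this is excluded by $c > 0$.

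The main obstacle I anticipate is exactly this last linear-independence step: recovering a Weyl orbit from agreement only on $\mu_E\setminus\mu_F$. Conceptually it would be cleaner to identify $(\theta_i + \theta_i^\upsilon)$ restricted to regular elliptic elements with the Deligne--Lusztig virtual character $R_{\mathbb{T},\theta_i}$ (proposition \ref{carterformula}) and invoke the classical disjointness of $R_{\mathbb{T},\theta}$'s for non-Weyl-conjugate regular $\theta$. Either way, once the orbit equality is in hand, combining it with $\chi_1(p) = \chi_2(p)$ and $\chi_i|_{1+\mathfrak{p}_E}\equiv 1$ yields $\chi_1 = \chi_2^\upsilon$ on $E^* = \mu_E\cdot\langle p\rangle\cdot(1+\mathfrak{p}_E)$, completing the proof.
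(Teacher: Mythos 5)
Your overall strategy is sound and, modulo one over-assumption, gives a correct proof that is \emph{self-contained} where the paper's proof is not: the paper disposes of Theorem \ref{depthzerosamecartans} with the single line ``An analogous argument as in \cite[pages 73--74]{spice} works here,'' and the positive-depth analogue (Lemma \ref{samecartanramifiedgl2}) proceeds by invoking Spice's Lemma 5.1 to first deduce $\chi_1|_{F^*(1+\mathfrak p_E)}=\chi_2^\upsilon|_{F^*(1+\mathfrak p_E)}$, then pins down $c=\pm 1$ and excludes $c=-1$ via positivity of formal degrees. Your Fourier-expansion argument for $\mathbf 1_{\mu_F}$ reaches $c=1$ directly (with the same positivity-of-degrees fallback only in the borderline $q=3$ case), and then linear independence does the rest. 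That is a genuinely different, and arguably cleaner, organization; as you note yourself, identifying $\theta_i+\theta_i^\upsilon$ on regular elliptic elements with $R_{\mathbb T,\theta_i}$ via Proposition \ref{carterformula} and citing Deligne--Lusztig disjointness would be a third clean route.

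The one point to flag is your opening assumption $\chi_i|_{F^*}=\aleph_{E/F}$, hence $\theta_i|_{\mu_F}\equiv 1$, which is what puts the four characters into the order-$(q+1)$ group $H$. The theorem is stated for general admissible pairs $(E/F,\chi_i)$, and the paper uses the corresponding positive-depth uniqueness statement for \emph{all} admissible pairs of $\mathrm{GL}(2,F)$, not just those with $\chi|_{F^*}=\aleph_{E/F}$; so one should not build this in. Fortunately the gap is cosmetic: without it, $\theta_1,\theta_1^\upsilon$ share a common restriction $\beta_1$ to $\mu_F$ and $\theta_2,\theta_2^\upsilon$ a common restriction $\beta_2$, and $g$ is supported on $\mu_F$ with $g|_{\mu_F}=2c\beta_1-2\beta_2$. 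The Fourier coefficients of such a function are constant on each fiber of restriction-to-$\mu_F$; matching the coefficient of $\theta_1$ (which is $c$ on the left, $\tfrac{2c}{q+1}$ on the right if $\beta_1\ne\beta_2$, or $\tfrac{2(c-1)}{q+1}$ if $\beta_1=\beta_2$) forces $\beta_1=\beta_2$ and then $c=1$ exactly as in your argument. Also worth noting: the step $\chi_1(p)=\chi_2(p)$ via evaluating at $\zeta_0$ and $p\zeta_0$ is genuinely needed in this general setting (it is \emph{not} redundant once you drop $\chi_i|_{F^*}=\aleph_{E/F}$), so your nonvanishing lemma is doing real work.

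Two small remarks. In the $q=3$ case you invoke $c>0$; one can also observe that for $q=3$ the group $H\cong\mathbb Z/4$ contains exactly one Frobenius orbit of regular characters, so two regular $\theta_i\in H$ are automatically Weyl-conjugate and the borderline case never occurs. And in the nonvanishing lemma the conclusion needs the norm-one subgroup to have order at least $4$, i.e.\ $q\ge 3$; this is guaranteed since the residual characteristic is odd, but it is worth stating explicitly.
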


\begin{proof}
An analogous argument as in \cite[pages 73-74]{spice} works here.
\end{proof}

Summing up, we have altogether shown that if $(E/F, \chi)$ is an admissible pair such that $\chi$ has level zero, then there is a unique depth zero supercuspidal representation, $\pi_{\chi {}_F \mu_{\chi}}$, whose character, on the range $\{ z \in T(F)^{reg} : 0 \leq n(z) \leq r/2 \} = \{z \in T(F)^{reg} : n(z) = 0 \}$, agrees with $F(\tilde\chi)$.  There is one minor point here to resolve.  Is there possibly a positive depth supercuspidal representation whose character, on the range $\{z \in T(F)^{reg} : n(z) = 0 \}$, also equals $F(\tilde\chi)$?  The same argument as in the end of section \ref{othercartan} shows that there isn't.

Therefore, combining theorems \ref{depthzerosamecartans} and \ref{depthzerocharactersmatchingup}, we obtain the following result.

\begin{theorem}
The assignment

\begin{eqnarray}
\{ \mathrm{supercuspidal} \ \phi : W_F \rightarrow SL(2,\mathbb{C}) \} & \mapsto & \tilde\chi \in \widehat{T(F)}_{\tau \circ \rho} \mapsto \pi(\tilde\chi) \nonumber
\end{eqnarray}

\noindent from section \ref{setup} is the Local Langlands correspondence for depth zero supercuspidal representations of $\mathrm{PGL}(2,F)$, where $\pi(\tilde\chi)$ is the unique supercuspidal representation whose character, on the range $\{ z \in T(F)^{reg} : 0 \leq n(z) \leq r/2 \} = \{z \in T(F)^{reg} : n(z) = 0 \}$, is $F(\tilde\chi)$.
\end{theorem}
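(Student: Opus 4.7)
The plan is to assemble this theorem as a corollary of the two existence-and-uniqueness statements already established in this section, together with the review of the classical correspondence from Section \ref{llcreview}. The overall logical structure is: given a depth zero supercuspidal parameter $\phi$, trace through the construction of Section \ref{setup} to recover the admissible pair $(E/F,\chi)$ (with $E/F$ unramified, $\chi$ level zero, and $\chi|_{F^*}=\aleph_{E/F}$) and the induced genuine character $\tilde\chi$ of $T(F)_{\tau\circ\rho}\cong E^*/N(E^*)$; then identify the representation $\pi(\tilde\chi)$ on the nose using the two uniqueness results; then compare with Theorem \ref{tamellc}.

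First I would unpack the data of $\phi$. A depth zero supercuspidal Weil parameter for $\mathrm{PGL}(2,F)$ is an irreducible $\phi:W_F\to\mathrm{SL}(2,\mathbb{C})$ of the form $\mathrm{Ind}_{W_E}^{W_F}\chi$ for an admissible pair $(E/F,\chi)$ with $E/F$ unramified and $\chi$ of level zero, subject to $\chi|_{F^*}=\aleph_{E/F}$ (as explained at the start of Section \ref{setup}, using Proposition \ref{centralcharacter}). This $\chi$ factors through $E^*/N(E^*)$ and hence gives a regular genuine character $\tilde\chi$ of $T(F)_{\tau\circ\rho}$ via Lemma \ref{doublecovertoriisomorphism}.

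Next I would invoke Theorem \ref{depthzerocharactersmatchingup} to produce an explicit candidate: it shows that $F(\tilde\chi)$ coincides, on the set $F^*A=E^*\setminus F^*(1+\mathfrak p_E) = \{z\in T(F)^{\mathrm{reg}}:n(z)=0\}$, with the character of the supercuspidal representation $\pi_{\chi\,{}_F\mu_\chi}$. So at least one depth zero supercuspidal representation (namely $\pi_{\chi\,{}_F\mu_\chi}$, which is the output of the classical local Langlands correspondence applied to $\phi$ by Theorem \ref{tamellc}) has the character prescribed by $F(\tilde\chi)$ on the required set, which proves existence of $\pi(\tilde\chi)$ together with the identity $\pi(\tilde\chi)=\pi_{\chi\,{}_F\mu_\chi}$.

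The main remaining task, and the only genuine obstacle, is uniqueness: I must rule out any other supercuspidal of $\mathrm{PGL}(2,F)$ whose character matches $F(\tilde\chi)$ on $\{n(z)=0\}$. There are three competing cases to exclude. If the competitor is depth zero from the same (unramified) elliptic torus $E^*$, Theorem \ref{depthzerosamecartans} forces the two defining characters of $E^*$ to be $\mathrm{Aut}(E/F)$-conjugate, hence the representations coincide. If the competitor is depth zero from a different (necessarily ramified) Cartan, the argument used in the proof of Proposition \ref{case3} applies: since $E/F$ is unramified, any $w\in E^*$ with $n(w)=0$ cannot be $G$-conjugated into the inducing subgroup of a ramified depth zero supercuspidal, so that competitor's character vanishes on $\{n(z)=0\}\cap E^*$, whereas $F(\tilde\chi)$ does not vanish identically there (this nonvanishing is guaranteed by the proof techniques of Propositions \ref{case1}–\ref{case3}, which showed that a supercuspidal character cannot vanish on the whole $n(w)=0$ locus of its own Cartan without violating admissibility of the pair). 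Finally, if the competitor has positive depth, then as noted at the end of Section \ref{othercartan}, the arguments of Theorems \ref{samecartans} and \ref{differentcartans} still apply and would force $(E,\chi)$ to coincide with the competitor's admissible pair up to $\mathrm{Aut}(E/F)$; but the competitor's character has positive level while $\chi$ has level zero, a contradiction. Once all three cases are eliminated, $\pi(\tilde\chi)$ is unique, and the composite map $\phi\mapsto\tilde\chi\mapsto\pi(\tilde\chi)=\pi_{\chi\,{}_F\mu_\chi}$ agrees with the classical correspondence of Theorem \ref{tamellc}, finishing the proof.
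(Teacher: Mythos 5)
Your proposal is correct and follows essentially the same route as the paper: existence of $\pi(\tilde\chi)$ via Theorem \ref{depthzerocharactersmatchingup} (which simultaneously identifies $\pi(\tilde\chi)=\pi_{\chi\,{}_F\mu_\chi}$, the output of Theorem \ref{tamellc}), uniqueness among depth zero supercuspidals via Theorem \ref{depthzerosamecartans}, exclusion of positive depth competitors by the argument closing Section \ref{othercartan}. One small simplification the paper exploits and you need not argue separately: your middle case (a depth zero competitor from a ramified Cartan) is vacuous, since by the admissibility condition any admissible pair $(E/F,\chi)$ with $\chi$ of level zero forces $E/F$ unramified, so every depth zero supercuspidal already comes from the unramified Cartan.
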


\emph{Acknowledgements}. I would like to thank Jeffrey Adams for directing my attention to this question and for being a tremendous support throughout my career; and Stephen DeBacker for a very helpful conversation during my visit to Michigan.  I would also like to thank Loren Spice for many very helpful conversations and for answering many questions about his work.  I would also like to thank Jeffrey Adler for several very helpful conversations about my work.  I also had several useful conversations with Hunter Brooks and Sean Rostami.

\bibliographystyle{amsplain}

\begin{thebibliography}{9}

\bibitem{adams1}
  \emph{J. Adams},
  Computing Global Characters, preprint, 2011.

\bibitem{adamsvogan}
  \emph{J. Adams and D. Vogan},
  L-Groups, Projective Representations, and the Langlands Classification.
  Amer. Journal of Math. 113 (1991), 45-138.

\bibitem{adrian}
  \emph{M. Adrian},
  A New Construction of the Local Langlands Correspondence for $GL(n,F)$, $n$ a prime, preprint, arXiv:1008.2727.

\bibitem{adrianlansky}
  \emph{M. Adrian and J. Lansky},
  A Real Groups Construction of the depth zero Local Langlands Correspondence for $PGSp(4,F)$, $F$ a $p$-adic field, preprint.

\bibitem{bushnellhenniart}
  \emph{C. Bushnell and G. Henniart},
  The Local Langlands Conjecture for GL(2), A Series of Comprehensive Studies in Mathematics, Volume 335, Springer Berlin Heidelberg, 2006.

\bibitem{bushnellhenniart1}
  \emph{C. Bushnell and G. Henniart},
  The Essentially Tame Local Langlands Correspondence, I,
  J. Amer. Math. Soc. 18 (2005), no. 3, 685--710.

\bibitem{carter}
  \emph{R. Carter},
  Finite Groups of Lie Type: Conjugacy Classes and Complex Characters, John Wiley and Sons Inc, 1993.

\bibitem{debacker}
  \emph{S. DeBacker},
  On Supercuspidal Characters of $GL_{\ell}$, $\ell$ a prime, Ph.D. thesis, University of Chicago, 1997.

\bibitem{kutzko}
  \emph{P. Kutzko},
  Character Formulas for supercuspidal representations of $GL_{\ell}$, $\ell$ a prime.,
  Amer. J. Math. 109 (1987), no. 2, 201--221.

\bibitem{kutzko1}
  \emph{P. Kutzko},
  The Langlands conjecture for $GL(2)$ of a local field.,
   Ann. of Math. (2) 112 (1980), no. 2, 381–412.

\bibitem{moy}
  \emph{A. Moy},
  Local Constants and the Tame Langlands Correspondence,
   American Journal of Math.  108  (1986),  no. 4, 863--929.

\bibitem{moyprasad}
  \emph{A. Moy and G. Prasad},
  Unrefined minimal $K$-types for $p$-adic groups,
   Invent. Math. 116, no. 1-3, 393-408 (1994).

\bibitem{rao}
  \emph{R. Ranga Rao},
  On some explicit formulas in the theory of Weil representation,
   Pacific J. Math.  157  (1993),  no. 2, 335--371.

\bibitem{roe}
  \emph{D. Roe},
  The Local Langlands Correspondence for Tamely Ramified Groups, Ph.D. thesis, Harvard University, 2011.

\bibitem{sallyshalika}
  \emph{P. Sally, J. Shalika},
  Characters of the discrete series of representations of ${\rm SL}(2)$ over a local field.,
   Proc. Nat. Acad. Sci. U.S.A. 61 1968 1231--1237.

\bibitem{shimizu}
  \emph{H. Shimizu},
  Some examples of new forms,
  J. Fac. Sci. Univ. Tokyo  24  (1977),  no. 1, 97-113.

\bibitem{silberger}
  \emph{A. Silberger},
  $PGL(2)$ over the $p$-adics: its representations, spherical functions, and Fourier analysis. Lecture Notes in Mathematics, Vol. 166 Springer-Verlag, Berlin-New York 1970 vii+204 pp.

\bibitem{spice}
  \emph{L. Spice},
  Supercuspidal Characters of $SL_{\ell}$ over a $p$-adic field, $\ell$ a prime, Amer. J. Math. 121 no. 1, 51–100

\end{thebibliography}

\end{document}